\newtheorem{theorem}{Theorem}[section]
\newtheorem{lemma}[theorem]{Lemma}
\newtheorem{proposition}[theorem]{Proposition}
\theoremstyle{remark}
\newtheorem*{remark}{Remark}
\numberwithin{equation}{section}
\def\Log{\operatorname{Log}}
\def\Li{\operatorname{Li}}
\renewcommand{\Re}{{\rm Re}}
\def\lala{\lambda\lambda}
\def\la{\lambda}
\def\DD{\mathcal{DD}}
\def\SC{\mathcal{SC}}
\begin{document}

\title{On the distribution of $t$-hooks of doubled distinct partitions}

\author[H. Cho]{Hyunsoo Cho}
\address{Institute of Mathematical Sciences, Ewha Womans University, 52 Ewhayeodae-gil, Seodaemun-gu, Seoul 03760, Republic of Korea}
\email{hyunsoo@ewha.ac.kr}

\author[B. Kim]{Byungchan Kim}
\address{School of Natural Sciences, Seoul National University of Science and Technology, 232 Gongneung-ro, Nowon-gu, Seoul, 01811, Republic of Korea}
\email{bkim4@seoultech.ac.kr}

\author[E. Kim]{Eunmi Kim$^\ast$}
\address{Institute of Mathematical Sciences, Ewha Womans University, 52 Ewhayeodae-gil, Seodaemun-gu, Seoul 03760, Republic of Korea}
\email{ekim67@ewha.ac.kr; eunmi.kim67@gmail.com}
\thanks{$^\ast$ Corresponding author.}

\author[A.J. Yee]{Ae Ja Yee}
\address{Department of Mathematics, The Pennsylvania State University, University Park, PA 16802, USA}
\email{yee@psu.edu}

\date{\today}
\subjclass[2020]{Primary  05A17, 11P82}
\keywords{integer partitions, strict partitions, doubled distinct partition, $t$-hooks, $t$-shifted hooks, hook length distribution}

\begin{abstract} 
Recently, Griffin, Ono, and Tsai examined the distribution of the number of $t$-hooks in partitions of $n$, which was later followed by the work of Craig, Ono, and Singh on the distribution of the number of $t$-hooks in self-conjugate partitions of $n$. Motivated by these studies, in this paper, we further investigate the number of $t$-hooks in some subsets of partitions. More specifically, we obtain the generating functions for the number of $t$-hooks in doubled distinct partitions and the number of $t$-shifted hooks in strict partitions. Based on these generating functions,  we prove that the number of $t$-hooks in doubled distinct partitions and the number of $t$-shifted hooks in strict partitions are both asymptotically normally distributed. 
\end{abstract}

\maketitle

\section{Introduction}
A {\it partition} $\lambda=(\lambda_1,\lambda_2,\dots,\lambda_\ell)$ of a positive integer $n$ is a weakly decreasing sequence of positive integers whose sum is $n$. Each $\la_i$ is called a {\it part} of $\la$. The sum of parts of $\la$ is called the {\it size} of $\lambda$ and denoted by $|\lambda|$. It is a convention that the empty sequence $\varnothing$ is considered a partition of $0$. We denote by $\mathcal{P}$ the set of all partitions.

The {\it Young diagram} of a partition $\lambda$ is a finite collection of boxes arranged in left-justified rows with $\lambda_i$ boxes in the $i$-th row. The {\it conjugate} of $\la$, denoted by $\la'$, is the partition obtained by reflecting the Young diagram of $\la$ about the main diagonal. If $\lambda=\lambda'$, then $\lambda$ is called a {\it self-conjugate} partition. We denote by  $\SC$ the set of self-conjugate partitions.

In the Young diagram, we label the box in the $i$-th row and the $j$-th column by $(i,j)$. The {\it hook} of the box $(i,j)$ is the collection of boxes below or to the right of the box $(i,j)$ and the box itself. The {\it hook length} is the number of boxes in the hook.
We call a hook of length $t$ a {\it $t$-hook} for short, and denote by $n_t(\lambda)$ the number of $t$-hooks of $\lambda$.

In Figure \ref{fig:diagram}, the Young diagram of the partition $(5,4,1)$ is illustrated with the hook length filled in each box. There are two $3$-hooks, so $n_3\big( (5,4,1)\big)=2$. The conjugate of $(5,4,1)$ is $(3,2,2,2,1)$.

\begin{figure}[ht]
	\centering
	\begin{ytableau}
		7 & 5& 4& 3& 1\\
		5&3&2&1\\
		1
	\end{ytableau}
\caption{The Young diagram of the partition $(5,4,1)$ with hook lengths
}\label{fig:diagram}
\end{figure}  

In the modular representation theory of the symmetric groups, $t$-hooks are a key notion \cite{JK, Olsson}. They are also closely connected to the mod 5, 7, and 11 partition congruences of Ramanujan \cite{GKS}, which have led to extensive study on $t$-hooks from a partition theoretic point of view \cite{CKNS}. 

Recently, Griffin, Ono, and Tsai \cite{GOT} examined the distribution of the number of $t$-hooks in partitions of $n$, and the work of Craig, Ono, and Singh \cite{COS} on the distribution of the number of $t$-hooks in self-conjugate partitions of $n$ followed immediately. Motivated by these studies, in this paper, we further investigate the number of $t$-hooks in some subsets of partitions.

We first introduce a natural probabilistic model for $n_t (\la)$. For a partition set $\mathcal{A}$, we define the random variable $N_{t,n}^{\mathcal{A}}$, which takes the value $n_t (\la)$ while $\la$ is chosen uniformly randomly from  partitions of $n$ in the set $\mathcal{A}$. For example, the values of $N_{3, 10}^{\mathcal{P}}$ with probabilities are given in Table \ref{tb:N_P}. 
\begin{table}[ht]
	\begin{tabular}{ccccc}
        \toprule
		$n_3 (\la)$ & $0$ & $1$ & $2$ & $3$ \\
 		\midrule
 		$\mathbb{P}(N_{3,10}^{\mathcal{P}} = n_3(\la))$ & $\frac{1}{21}$ & $\frac{3}{7}$ & $\frac{1}{2}$ & $\frac{1}{42}$\\
        \bottomrule
	\end{tabular}
	\caption{The values of $N_{3, 10}^{\mathcal{P}}$ with probabilities} 
	\label{tb:N_P}
\end{table}
 
Griffin, Ono, and Tsai \cite[Theorem 1.1]{GOT} proved that $N_{t,n}^{\mathcal{P}}$ is asymptotically normally distributed with mean $\sim \frac{\sqrt{6n}}{\pi} - \frac{t}{2} + o(1)$ and variance $\sim \frac{(\pi^2-6)\sqrt{6n}}{2\pi^3}$ as $n \to \infty$. Craig, Ono, and Singh \cite[Theorem 1.2]{COS} proved that $N_{t,n}^{\SC}$ is also asymptotically normally distributed as $n \to \infty$.

Partitions and self-conjugate partitions along with hooks play crucial roles in the representation theory of the symmetric groups \cite{JK, Olsson}, while strict partitions, namely partitions into distinct parts, and shifted hooks arise in the study of the spin representation of the symmetric groups \cite{MO, Olsson, WW}. 

For a strict partition $\lambda$, the {\it shifted Young diagram} of $\lambda$ is the diagram obtained from the Young diagram of $\lambda$ by shifting the $i$-th row to the right by $(i-1)$ boxes. The {\it shifted hook length} of a box $(i,j)$ in the shifted Young diagram is the number of boxes on its right, below, and itself, and the boxes on the $(j+1)$-st row if exists \cite{CHNS}. In Figure~\ref{fig2:diagram}, the shifted Young diagram of the partition $(5,4,1)$ is illustrated with the shifted hook length filled in each box.

\begin{figure}[ht]
	\begin{subfigure}{0.4\textwidth}
	\centering
	\begin{ytableau}
		9 & 6 & 5 & 3 & 2  \\
		\none & 5 & 4 &2 & 1\\
		\none & \none & 1
	\end{ytableau}
	\end{subfigure}
\caption{The shifted Young diagram of the partition $(5,4,1)$ with its shifted hook lengths}\label{fig2:diagram}
\end{figure}

Shifted hook lengths can be described using doubled distinct partitions. In the shifted Young diagram of $\lambda$, we add $\la_i$ boxes to the $(i-1)$-st column. The resulting partition is called the {\it doubled distinct partition} $\lala$. For example, for $\la=(5,4,1)$, the corresponding doubled distinct partition $\lala$ is $(6,6,4,2,2)$. It follows from the construction of $\lala$  that the shifted hook length of the box $(i,j)$ in the shifted Young diagram of $\la$ is the same as the hook length of the box $(i, j+1)$ in the Young diagram of $\lala$.  In Figure~\ref{fig3:diagram}, the Young diagram of the doubled distinct partition $\lala=(6,6,4,2,2)$ is given with its hook lengths, and the shifted Young diagram of $\la = (5,4,1)$ is colored gray.
\begin{figure}[ht]
	\begin{subfigure}{0.4\textwidth}
	\centering
	\begin{ytableau}
		10&*(gray!30) 9&*(gray!30)6&*(gray!30) 5&*(gray!30) 3&*(gray!30) 2\\
		9&8&*(gray!30) 5&*(gray!30) 4&*(gray!30) 2&*(gray!30) 1\\
		6&5&2&*(gray!30) 1\\
		3&2\\
		2&1
	\end{ytableau}
	\end{subfigure}	
\caption{The Young diagram of the partition $(6,6,4,2,2)$ with its hook lengths
}\label{fig3:diagram}
\end{figure}

The main objective of this paper is to study the distribution of $t$-shifted hooks in strict partitions as well as the distribution of $t$-hooks in doubled distinct partitions. We define $\mathcal{D}$ and $\DD$ to be the sets of strict partitions and doubled distinct partitions, respectively. Note that the size of a doubled distinct partition $\lala$ is always even, and the number of doubled distinct partitions of $2n$ is equal to $q(n)$, the number of strict partitions of $n$. Also, as mentioned earlier, the number $s_t(\la)$ of $t$-shifted hooks of a strict partition $\lambda$ equals the number of $t$-hooks above the main diagonal of the Young diagram of $\lala$.  Let $\widehat{n}_t(\la)$ be the number of $t$-hooks above the main diagonal of a partition $\la$. Then, counting $s_t(\la)$ of $\la\in \mathcal{D}$ is equivalent to counting $\widehat{n}_t(\lala)$. 

As an analogue of $N_{t,n}^{\mathcal{A}}$, we define $\widehat{N}_{t,n}^{\mathcal{A}}$ to be the value $\widehat{n}_t(\la)$ when $\la$ is chosen uniformly randomly from  partitions of $n$ in the set $\mathcal{A}$.

In order to study $N_{t,2n}^{\DD}$ and $\widehat{N}_{t,2n}^{\DD}$, the first step is to find the generating functions for $n_t(\lala)$ and $\widehat{n}_t(\lala)$:
\begin{align*}
	F_t(x;q)&:= \sum_{\lala \in \mathcal{DD}} x^{n_t(\lala)} q^{|\lala|} =: \sum_{n \geq 0} dd_t(n;x) q^n, \\
	\widehat{F}_t(x;q)&:= \sum_{\lala \in \mathcal{DD}} x^{\widehat{n}_t(\lala)} q^{|\lala|} =: \sum_{n \geq 0} \widehat{dd}_t(n;x) q^n.
\end{align*}

Using the properties of the Littlewood decomposition \cite{GKS} and some $q$-series manipulations together with combinatorial reasoning, we derive the following expression for $F_t (x;q)$. Here and throughout the paper, we use the standard $q$-product notation $(a; q)_{n} = \prod_{k=1}^{n} (1-aq^{k-1})$ with $n \in \mathbb{N} \cup \{ \infty \}$. 

\begin{theorem}\label{thm:gen_F_t}
	For a positive integer $t$, 
	\[
		F_t (x; q) = 
		\begin{cases}
			(-q^2;q^2)_\infty \left( (1-x^2)q^{2t}; q^{2t}\right)_\infty^{\frac{t-1}{2}} D^*(x;q^t) &\text{if $t$ is odd,}\\
			\\
			(-q^2;q^2)_\infty \left( (1-x^2)q^{2t}; q^{2t}\right)_\infty^{\frac{t-2}{2}} D^*(x;q^t) H^*(x;q^t) &\text{if $t$ is even,}
		\end{cases}
	\]
	where 
	\begin{align*}
		D^*(x;q) &=\frac1{(1+x)} \bigg( \left((1-x^2)q^2;q^4\right)_{\infty} +  x \left((1-x^2\right)q^4 ;q^4)_{\infty} \bigg), \\
		H^*(x;q) &=\frac1{2x} \left[ \left( 1- \sqrt{\frac{1-x}{1+x}}\right) (-\sqrt{1-x^2};-q)_{\infty} + \left( 1+ \sqrt{\frac{1-x}{1+x}}\right) (\sqrt{1-x^2};-q)_{\infty}\right]. 	
	\end{align*}
\end{theorem}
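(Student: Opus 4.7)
The plan is to apply the Littlewood decomposition, specialized to doubled distinct partitions. Recall that this bijection sends $\lambda$ to a pair $(\mu, (\lambda^{(0)}, \ldots, \lambda^{(t-1)}))$, with $\mu$ the $t$-core and the $\lambda^{(i)}$ the $t$-quotient, satisfying $|\lambda| = |\mu| + t\sum_i |\lambda^{(i)}|$ and $n_t(\lambda) = \sum_i |\lambda^{(i)}|$. My strategy is (i) to describe the image of $\DD$ under this bijection, (ii) to write down the generating function for each resulting piece, and (iii) to combine and simplify them into the claimed closed form.

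For step (i), I would use the Frobenius characterization: $\lala \in \DD$ precisely when its Frobenius coordinates satisfy $a_i = b_i + 1$ for every $i$, or equivalently when every hook length on the main diagonal of $\lala$ is even. Propagating this condition through the $\beta$-number formulation of the Littlewood decomposition, I expect $\lala \in \DD$ to force $\mu$ itself to be a doubled distinct $t$-core, and the $t$-quotient to admit a conjugation pairing $\lambda^{(i)} = (\lambda^{(t-i)})'$ for non-central indices, leaving one distinguished central component when $t$ is odd and two distinguished central components when $t$ is even. This parity split is precisely what produces the two cases stated in the theorem.

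For steps (ii)--(iii), the doubled distinct $t$-core generating function combines with the pair contributions $\sum_\nu x^{2|\nu|} q^{2t|\nu|}$ coming from each conjugate pair $(\lambda^{(i)}, \lambda^{(t-i)})$ to produce, after standard $q$-product manipulations, the prefactor $(-q^2;q^2)_\infty \cdot \bigl((1-x^2)q^{2t};q^{2t}\bigr)_\infty^{\lfloor (t-1)/2 \rfloor}$. The distinguished central component(s) then contribute the factor $D^*(x;q^t)$ (always present), and, in the even case, the additional factor $H^*(x;q^t)$.

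The main obstacle, in my view, will be the closed-form evaluation of these central factors. The presentation of $D^*$ as $\tfrac{1}{1+x}(A+xB)$ with $A$ and $B$ both $q$-products is characteristic of a sum split according to the parity of a size statistic and then reassembled; similarly, the occurrence of $\sqrt{(1-x)/(1+x)}$ and $\sqrt{1-x^2}$ in $H^*$, together with the $(-q)$ argument in its inner $q$-products, points to a further decomposition by parity and a resummation via the Jacobi triple product or a related Heine-type identity. Pinning down the precise central sums forced on the Frobenius-symbol side, and then carrying out this resummation, is the combinatorial and algebraic heart of the proof; once this is done, the remaining assembly reduces to routine $q$-series algebra.
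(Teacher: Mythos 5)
Your overall architecture matches the paper's: the Littlewood decomposition restricted to $\DD$, the conjugation pairing $\nu_{(i)}=\nu_{(t-i)}'$ with a distinguished doubled distinct component $\nu_{(0)}$ and, for even $t$, a self-conjugate component $\nu_{(t/2)}$, and the identification of the central factors $D^*$ and $H^*$ as the hard part. However, there is a genuine error in the key lemma you invoke. You assert that $n_t(\la)=\sum_i |\la^{(i)}|$, i.e.\ that the number of $t$-hooks equals the total size of the $t$-quotient. That identity counts the hooks of length \emph{divisible} by $t$; the number of hooks of length \emph{exactly} $t$ is $n_t(\la)=\sum_i n_1(\la^{(i)})$, the number of $1$-hooks (corners) in the quotient components. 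This is (DD4) in the paper, and the distinction is not cosmetic: with your statistic the conjugate pairs contribute $\sum_{\nu} x^{2|\nu|}q^{2t|\nu|}=1/(x^2q^{2t};x^2q^{2t})_\infty$, whereas the correct contribution is $\sum_{\nu} x^{2n_1(\nu)}q^{2t|\nu|}=((1-x^2)q^{2t};q^{2t})_\infty/(q^{2t};q^{2t})_\infty$ by Han's formula. These disagree already at order $q^{4t}$ (coefficient $2x^4$ versus $2x^2$), so the "standard $q$-product manipulations" you invoke cannot turn your pair sum into the claimed prefactor $((1-x^2)q^{2t};q^{2t})_\infty^{\lfloor (t-1)/2\rfloor}$; the computation as described would not reach the theorem.

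The same correction propagates to the central factors: what must be computed is $\sum_{\mu\mu\in\DD} x^{n_1(\mu\mu)}q^{t|\mu\mu|}$ (which, via a Durfee-square dissection and the Heine/$q$-binomial identities, yields $(-q^{2t};q^{2t})_\infty D^*(x;q^t)$ — Proposition~\ref{prop:gen_1}) and, for even $t$, $\sum_{\pi\in\SC}x^{n_1(\pi)}q^{t|\pi|}$ (quoted from the self-conjugate analogue in the literature, giving $H^*$). Your instinct that the $\frac{1}{1+x}(A+xB)$ shape of $D^*$ reflects a parity split followed by a Heine-type resummation is correct, but the sum being split is over the number of corners, not over sizes. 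Fix the transfer law to $n_t(\lala)=\sum_i n_1(\nu_{(i)})$ and replace every size-weighted sum by the corresponding corner-weighted sum, and your outline becomes the paper's proof.
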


\begin{remark}
	Here $D^*(x,q)$ and $H^{*} (x;q)$ are essentially the generating functions for the number of $1$-hooks in partitions from $\DD$ and $\SC$, respectively. In Proposition~\ref{prop:gen_1}, we will see that
	\[
		(-q^2;q^2)_{\infty} D^*(x;q) =  \sum_{\lala \in \mathcal{DD}} x^{n_1(\lala)} q^{|\lala|}. 
    	\]
    	From \cite[Theorem 3.1]{AAOS}, we also have
    	\[
		(-q;q^2)_{\infty}H^*(x;q) = \sum_{\lambda \in \mathcal{SC}} x^{n_1(\lambda)} q^{|\lambda|}.
    	\]
\end{remark}

Similarly, using the Littlewood decomposition and $q$-series manipulations together with the relationship between $\widehat{n}_t(\lala)$ and the number of $1$-hooks in the $t$-quotient of $\lala$, we obtain the following expression for $\widehat{F}_{t} (x,q)$.

\begin{theorem}\label{thm:shift_gen}
	For a positive integer $t$, 
	\begin{align*}
		\widehat{F}_{t} (x; q) =
		\begin{dcases} 
			(-q^2;q^2)_\infty  \left( (1-x)q^{2t}; q^{2t}\right)_\infty^{\frac{t-1}{2}}  ((1-x)q^{2t};q^{4t})_{\infty} & \text{\qquad if $t$ is odd},\\ 
            \\
			\begin{aligned}(-q^2;q^2)_\infty  \left(  (1-x)q^{2t}; q^{2t}\right)_\infty^{\frac{t-2}{2}} ((1-x)q^{2t};q^{4t})_{\infty}\\  
			\times\frac12 \bigg( (-\sqrt{1-x} q^t;-q^t)_{\infty} +(\sqrt{1-x} q^t; -q^t)_{\infty} \bigg)\end{aligned} & \text{\qquad if $t$ is even}.
		\end{dcases}
	\end{align*}
\end{theorem}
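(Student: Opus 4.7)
Proof plan. Our strategy adapts the methodology used to prove Theorem~\ref{thm:gen_F_t}. The central tool is the Littlewood $t$-decomposition specialized to doubled distinct partitions, combined with a careful accounting of how $t$-hooks above the main diagonal of $\lala$ are distributed among the components of the $t$-quotient.

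First, we apply the Littlewood decomposition: every $\lala \in \DD$ is in bijection with a pair consisting of a doubled distinct $t$-core and a $t$-quotient $(\mu^{(0)},\dots,\mu^{(t-1)})$ that inherits a symmetry between outer components (essentially a conjugacy relation $\mu^{(t-1-i)} = (\mu^{(i)})'$) and carries specific constraints on the central component(s) determined by the parity of $t$. Unlike in Theorem~\ref{thm:gen_F_t} where we tracked $n_t$, here we must identify how $\widehat{n}_t(\lala)$ distributes across the Littlewood pieces. Under the bijection, the $t$-hooks of $\lala$ correspond to cells of the $t$-quotient; the conjugacy of the outer pairs splits those $t$-hooks symmetrically between positions above and below the diagonal of $\lala$, so each outer pair contributes only $|\mu^{(i)}|$ to $\widehat{n}_t(\lala)$, while the central component(s) contribute through an above-diagonal $1$-hook statistic of their own.

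Second, we assemble the generating function. The $t$-core contribution combines with the $x=1$ value of the quotient contributions to produce the prefactor $(-q^2;q^2)_\infty = \sum_{\lala \in \DD} q^{|\lala|}$. The remaining (normalized) factor admits a closed form matching the theorem: for odd $t$, the outer pairs contribute $((1-x)q^{2t};q^{2t})_\infty^{(t-1)/2}$ and the central piece contributes $((1-x)q^{2t};q^{4t})_\infty$; for even $t$, the outer pairs contribute $((1-x)q^{2t};q^{2t})_\infty^{(t-2)/2}$, while the central region consists of two intertwined sub-pieces whose combined contribution yields $((1-x)q^{2t};q^{4t})_\infty$ multiplied by the symmetrized average in the theorem. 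The key $q$-series identities are the even/odd base split
\[
(yq^t;-q^t)_\infty = (yq^t;q^{2t})_\infty\,(-yq^{2t};q^{2t})_\infty,
\]
together with the even-in-$y$ extraction $\tfrac12[f(y)+f(-y)]$ at $y = \sqrt{1-x}$, which converts the generating function arising from the second central sub-piece into an expression polynomial in $1-x$.

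The principal obstacle is the even-$t$ case. There the clean reduction to a single strict-partition central piece available for odd $t$ breaks down: the central portion of the $t$-quotient of a doubled distinct partition admits two components whose above-diagonal contributions do not separate into independent factors but must instead be captured through the averaged $\sqrt{1-x}$ expression. Establishing the correct two-piece central structure and verifying the resulting $q$-series manipulations to land precisely in the even powers of $\sqrt{1-x}$ is the most delicate step of the argument; the odd-$t$ case then falls out in parallel, with the $1$-hook generating function of the central strict partition collapsing directly to $((1-x)q^{2t};q^{4t})_\infty$ after normalization.
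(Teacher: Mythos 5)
Your overall architecture matches the paper's (Littlewood decomposition, a refined version of (DD4) tracking which quotient component receives each above-diagonal $t$-hook, then generating functions for the central pieces), but two of your key combinatorial claims are wrong or unjustified. First, the $t$-hooks of $\lala$ do not correspond to \emph{cells} of the $t$-quotient, and a conjugate outer pair does not contribute $|\mu^{(i)}|$ to $\widehat{n}_t(\lala)$: by (DD4) the hooks of length exactly $t$ correspond to the $1$-hooks (corners) of the quotient components, so each outer pair contributes $n_1(\nu_{(i)})=\tfrac12\bigl(n_1(\nu_{(i)})+n_1(\nu_{(t-i)})\bigr)$, using that conjugation preserves the number of corners. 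Your claim would make each outer pair contribute the factor $1/(xq^{2t};q^{2t})_\infty$, which contradicts the Han-type factor $((1-x)q^{2t};q^{2t})_\infty/(q^{2t};q^{2t})_\infty$ that you then correctly assert; as written, the combinatorial half and the $q$-series half of your argument are inconsistent. Second, the assertion that the central component(s) ``contribute through an above-diagonal $1$-hook statistic of their own'' is precisely the nontrivial content of the paper's Theorem~\ref{thm:ltwd2}, and you have not proved it. One must show that the only $t$-hooks of $\lala$ not paired across the main diagonal are those on the diagonal (which occur exactly when $t$ is even and $t/2\in\la$, and which land on the diagonal of the self-conjugate component $\nu_{(t/2)}$) and those in column $\ell+1$ (which occur exactly when $t\in\la$, and which land in the first column to the right of the Durfee square of $\nu_{(0)}$); this requires the explicit hook-length formulas for $\lala$ and chasing the columns $\binom{t/2}{t/2-1}$ and $\binom{t}{t-1}$ of the Frobenius symbol through Wright's map. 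Without that, the identity $\widehat{n}_t(\lala)=\widehat{n}_1(\nu_{(0)})+\widehat{n}_1(\nu_{(t/2)})+\tfrac12\sum_i(\cdots)$ is a guess rather than a theorem.

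Relatedly, your description of the even-$t$ case is off: the two central components $\nu_{(0)}\in\DD$ and $\nu_{(t/2)}\in\SC$ \emph{do} separate into independent factors, and the $\sqrt{1-x}$ average comes solely from the self-conjugate component, not from any entanglement between the two. What remains missing is the evaluation of the two central generating functions, namely $\sum_{\mu\mu\in\DD}x^{\widehat{n}_1(\mu\mu)}q^{|\mu\mu|}=((1-x)q^2;q^4)_\infty(-q^2;q^2)_\infty$ and $\sum_{\pi\in\SC}x^{\widehat{n}_1(\pi)}q^{|\pi|}=\tfrac12(-q;q^2)_\infty\bigl((-\sqrt{1-x}\,q;-q)_\infty+(\sqrt{1-x}\,q;-q)_\infty\bigr)$. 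Your base-splitting identity and parity-extraction remark explain why the symmetrized average is a power series in $x$, but they do not establish these identities, which in the paper follow from a Durfee-square dissection combined with Heine's transformation and the $q$-binomial theorem.
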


Recall that the random variable $N^{\mathcal{DD}}_{t,2n}$ gives the number of $t$-hooks when we choose a doubled distinct partition $\la\la$ uniformly randomly from the set $\DD_{2n} = \{ \lala \in \DD : |\lala| = 2n \}$. For example, the values of $N_{3,10}^{\DD}$ with corresponding probabilities are in Table \ref{tb:N_DD}.
\begin{table}[h!]
	\begin{tabular}{ccccc}
    \toprule
 	$n_3 (\la\la)$ & $1$ & $2$ & $3$ & $4$ \\
 	\midrule
 	$\mathbb{P}(N_{3,10}^{\DD} = n_3(\la\la))$ & $\frac{1}{5}$ & $\frac{2}{5}$ & $\frac{1}{5}$ & $\frac{1}{5}$\\
    \bottomrule
 	\end{tabular}
	\caption{The values of $N_{3,10}^{\DD}$  with probability} 
	\label{tb:N_DD}
\end{table}

Figure \ref{tendency_fig} shows the distribution of $N_{3,n}^{\DD}$ for $n=1000$, $n=2500$, and $n=5000$. We can observe the bell-like distribution as a normal distribution while the mean moves to the right. 

\begin{figure}[t]
	\includegraphics[scale=.33]{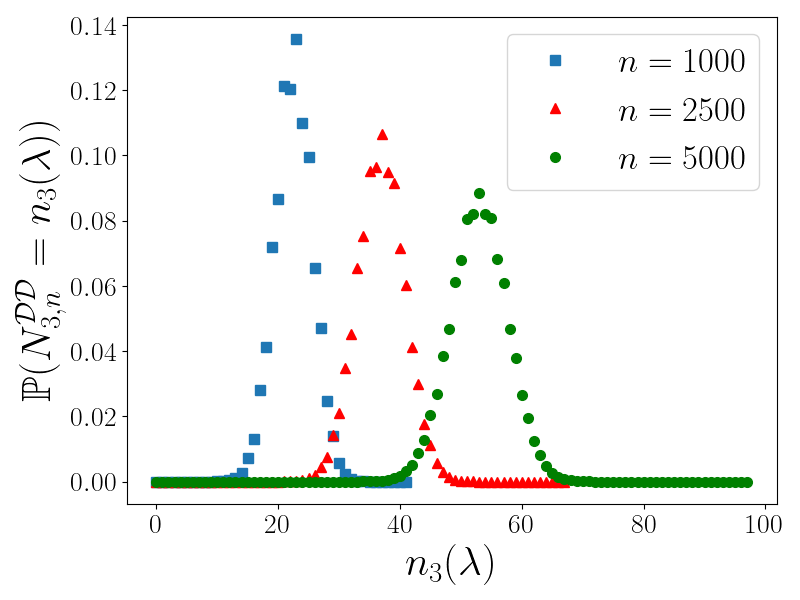}
	\caption{The distribution of $N_{3, n}^{\DD}$}\label{tendency_fig}
\end{figure}

It turns out that $N_{t, 2n}^{\DD}$ is also normally distributed as $n \to \infty$. Here and throughout the paper, $\delta_S$ is $1$ if the condition $S$ is true, and is $0$ otherwise.
\begin{theorem}\label{thm:normal}
	Let $t$ be a positive integer. Then as $n \to \infty$, $N^{\mathcal{DD}}_{t,2n}$  is asymptotically normally distributed with mean 
	\[
		\mu_{t, 2n} =  \frac{2\sqrt{3n}}{\pi} + \frac3{\pi^2} -\frac{t}2 + \frac{\delta_{2|t}}4  + O\left( n^{-\frac12}\right)
	\]
and  variance  
	\[
		\sigma_{t, 2n}^{2} = \frac{2(\pi^2-6)}{\pi^3} \sqrt{3n} -\frac{36}{\pi^4} + \frac{3}{\pi^2} -\frac14 - \frac{\delta_{2|t}}8  + O\left(n^{-\frac12}\right).
	\]
\end{theorem}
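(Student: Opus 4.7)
The plan is to deduce the asymptotic normality of $N^{\DD}_{t,2n}$ from the closed form of $F_t(x;q)$ in Theorem~\ref{thm:gen_F_t} by analyzing its moment generating function, following the template set in \cite{GOT,COS}. Since $F_t(1;q)=(-q^2;q^2)_\infty=\sum_{n\ge 0} q(n) q^{2n}$, the probability generating function of $N^{\DD}_{t,2n}$ equals $dd_t(2n;x)/q(n)$, and it suffices to show that for every fixed $u\in\mathbb{R}$,
\[
\log \mathbb{E}\!\left[\exp\!\left(u\,\tfrac{N^{\DD}_{t,2n}-\mu_{t,2n}}{\sigma_{t,2n}}\right)\right] \longrightarrow \tfrac{u^2}{2}
\]
as $n\to\infty$.

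The first step is to obtain a uniform asymptotic expansion of $\log F_t(e^{v};q)$ as $q=e^{-\tau}\to 1^-$ and $v\to 0$. Each factor in Theorem~\ref{thm:gen_F_t} is either a $q$-Pochhammer product $(a;q^r)_\infty$ or a finite linear combination of such products; applying the standard Mellin/Euler--Maclaurin estimate
\[
\log(a;e^{-r\tau})_\infty = -\tfrac{1}{r\tau}\,\Li_2(a) - \tfrac{1}{2}\log(1-a) + O(\tau)
\]
uniformly for $a$ on compact subsets of $\mathbb{C}\setminus[1,\infty)$ (cf.\ \cite[\S 2]{GOT}, \cite[\S 3]{COS}) produces a two-variable expansion of $\log F_t(e^v;e^{-\tau})$ in $\tau$ and $v$, whose coefficients will ultimately supply the mean and variance via differentiation in $v$ at $v=0$.

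Next, I would extract $[q^{2n}]F_t(e^v;q)$ by a saddle-point analysis (Wright's circle method). At $v=0$, the saddle for $(-q^2;q^2)_\infty$ sits at $\tau_0\sim \pi/\sqrt{12n}$, recovering the Hardy--Ramanujan asymptotic for $q(n)$; perturbing to $v=u/\sigma_{t,2n}$ shifts the saddle by a lower-order amount. Standard estimates of the tail integrals away from the saddle (exactly as in \cite{GOT,COS}) give
\[
\log dd_t(2n;e^v)-\log q(n) = \mu_{t,2n}\,v + \tfrac12\sigma_{t,2n}^2\,v^2 + O\!\left(v^3\sqrt{n}\right),
\]
where $\mu_{t,2n}$ and $\sigma_{t,2n}^2$ are computed by substituting $\tau_0$ into the $v$- and $v^2$-coefficients from Step~1. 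The dichotomy between odd $t$ (only $D^*$ present) and even $t$ (extra factor $H^*$) in the formula for $F_t$ is precisely what produces the $\delta_{2|t}$ refinements in the claimed formulas.

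Setting $v=u/\sigma_{t,2n}$ with $\sigma_{t,2n}^2\sim c\sqrt{n}$ then makes the error term $O(u^3 n^{-1/4})\to 0$, giving convergence of the moment generating function to $e^{u^2/2}$ and hence the desired normal limit. The main obstacle is tracking the finitely many subleading contributions needed to identify the constants $\tfrac{3}{\pi^2}$, $-\tfrac{t}{2}$, $-\tfrac{36}{\pi^4}$, $-\tfrac14$, and the parity corrections $\tfrac{\delta_{2|t}}{4}$, $-\tfrac{\delta_{2|t}}{8}$: one must carry each Euler--Maclaurin expansion two orders past the leading term and combine these with the subleading saddle correction consistently. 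This bookkeeping is delicate but manageable thanks to Theorem~\ref{thm:gen_F_t}, which expresses $F_t$ explicitly as a product (and finite sum) of factors of the form $(a;q^r)_\infty$.
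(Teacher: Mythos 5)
Your overall architecture (show the normalized moment generating function tends to $e^{u^2/2}$ and invoke Curtiss' theorem) matches the paper's, but the analytic engine you propose is the one the paper deliberately rejects, and the step you label ``standard'' is exactly where the difficulty lies. You plan to extract $[q^{2n}]F_t(e^v;q)$ by a single saddle-point/Wright-type analysis with the tail integrals bounded ``exactly as in \cite{GOT,COS}.'' For $F_t(x;q)$ this does not transfer: the factor $(-q^2;q^2)_\infty$ together with the products $((1-x^2)q^{2t};q^{2t})_\infty$, $D^*$, and $H^*$ behaves very differently near different roots of unity $e^{2\pi i h/k}$ --- in particular near $q=-1$ and near $\zeta_k$ with $\gcd(t,k)>1$, where the relevant exponential weight involves $\Li_2\bigl((1-x^2)^{k/d}\bigr)$ and $\Li_2\bigl((1-x^2)^{k/2d}\bigr)$ rather than $\Li_2(1-x^2)$. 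Showing that all of these arcs are subdominant is the content of Lemmas~\ref{lem:asymp_gen_odd} and~\ref{lem:asymp_gen_even} and of the estimates for $\Sigma_1$ ($k>1$) and $\Sigma_2$ in Section~\ref{sec:circle}, and it forces the hypothesis $\Li_2(1-x^2)<\frac{\pi^2}{12t^2}$ in Theorem~\ref{thm:dd_asym}. None of this is supplied by citing \cite{GOT,COS}, and the paper explicitly states that bounding $F_t$ on the minor arc in a saddle-point framework ``would be very demanding,'' which is why it runs a full Farey-dissection circle method instead. As written, your proposal leaves this minor-arc control as an unproved assertion, so the proof is incomplete at its most delicate point.

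A secondary divergence concerns how the constants in $\mu_{t,2n}$ and $\sigma_{t,2n}^2$ are obtained. You propose to read them off a uniform two-variable expansion of $\log F_t(e^v;e^{-\tau})$ evaluated at the shifted saddle; this requires carrying the Euler--Maclaurin expansions two orders deep \emph{uniformly in $v$} and justifying the interchange of differentiation with the asymptotic expansion. The paper avoids this by differentiating first: it sets $S_{k,t}(q)=\bigl(x\frac{\partial}{\partial x}\bigr)^k F_t(x;q^{1/2})\big|_{x=1}=(-q;q)_\infty f_{k,t}(q)$ with $f_{k,t}$ a rational function of $q$, applies Wright's circle method to this much simpler single-variable series (Theorem~\ref{thm:total_asymp}), and only then uses the fixed-$x$ asymptotic of $dd_t(2n;x)$ to get normality. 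Your route can in principle produce the same constants, but you should either adopt the paper's differentiate-then-expand order or explicitly establish the uniformity your two-variable expansion needs; and in either case the minor-arc bound described above must be supplied.
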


For the distribution of $\widehat{N}_{t,2n}^{\DD}$, we see from Proposition~\ref{prop:ntdt1} that
\[
	\widehat{N}_{t,2n}^{\DD} =\begin{cases}
		\frac{1}{2} ( N_{t,2n}^{\DD} +1) &\text{ with probability that $t \in \la$ and $t/2 \not\in \la$,} \\
		\frac{1}{2} (N_{t,2n}^{\DD} - 1) &\text{ with probability that $t \not\in \la$ and $t/2 \in \la$,} \\
		\frac{1}{2} N_{t,2n}^{\DD}  &\text{ otherwise,} 
		\end{cases}
\]
where $\lala$ is a doubled distinct partition of $2n$ chosen uniform randomly from $\mathcal{DD}$ with $\la \in \mathcal{D}$. As $n \to \infty$, the probability that $\la\in \mathcal{D}$ contains a part of specific size becomes $\frac{1}{2}$. Thus, one can expect that 
\begin{align*}
	\mathbb{E} \left[\widehat{N}_{t,2n}^{\DD} \right] &\sim \begin{cases}
		 \frac{1}{2} \mathbb{E}\left[N_{t,2n}^{\DD} \right] + \frac{1}{4} &\text{ if $t$ is odd,} \\
		 \frac{1}{2} \mathbb{E}\left[N_{t,2n}^{\DD} \right]  &\text{ if $t$ is even,}\\
		\end{cases} \\
	\intertext{and}
        {\rm Var} \left[ \widehat{N}_{t,2n}^{\DD} \right] &\sim  \begin{cases}
		 \frac{1}{4} {\rm Var} \left[N_{t,2n}^{\DD} \right] + \frac{1}{16} &\text{ if $t$ is odd,} \\
		 \frac{1}{4} {\rm Var} \left[N_{t,2n}^{\DD} \right]  + \frac{1}{8} &\text{ if $t$ is even,}\\
		\end{cases}
\end{align*}
as $n \to \infty$. The following theorem confirms that this heuristic analysis is indeed true.

\begin{theorem}\label{thm:shift_normal}
Let $t$ be a positive integer. Then as $n \to \infty$, $\widehat{N}^{\mathcal{DD}}_{t,2n}$  is asymptotically normally distributed with mean 
	\[
		\widehat{\mu}_{t, 2n} =  \frac{\sqrt{3n}}{\pi} + \frac3{2\pi^2} -\frac{t}4 + \frac{1}{4} - \frac{\delta_{2|t}}8  + O\left( n^{-\frac12}\right)
	\]
and  variance 
	\[
		\widehat{\sigma}_{t, 2n}^{2} =  \frac{(\pi^2-6)}{2 \pi^3} \sqrt{3n} -\frac{9}{\pi^4}  + \frac{3}{4\pi^2} +\frac{\delta_{2|t}}{32}+ O\left(n^{-\frac12}\right).
	\]
\end{theorem}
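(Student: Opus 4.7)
The plan is to mirror the proof of Theorem \ref{thm:normal} using the generating function $\widehat{F}_{t}(x;q)$ from Theorem \ref{thm:shift_gen} in place of $F_{t}(x;q)$. Both generating functions share the same structural form — a base product $(-q^{2};q^{2})_{\infty}$ (whose coefficient of $q^{2n}$ is $q(n)$) multiplied by factors that degenerate to $1$ at $x=1$ — so the same asymptotic machinery applies in both settings.

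The first step is to write $\mathbb{E}[x^{\widehat{N}^{\mathcal{DD}}_{t,2n}}] = [q^{2n}]\widehat{F}_{t}(x;q)\big/[q^{2n}]\widehat{F}_{t}(1;q)$ and extract the first and second factorial moments by differentiating $\log \widehat{F}_{t}(x;q)$ at $x=1$. The Pochhammer factors $((1-x)q^{2t};q^{2t})_{\infty}^{(t-1)/2}$ and $((1-x)q^{2t};q^{4t})_{\infty}$ generate Lambert-type $q$-series such as $\sum_{m\ge 1} q^{2tm}/(1-q^{2tm})$, while in the even case the extra symmetric sum $\tfrac{1}{2}\bigl((\sqrt{1-x}\,q^{t};-q^{t})_{\infty}+(-\sqrt{1-x}\,q^{t};-q^{t})_{\infty}\bigr)$ contributes a further Lambert-type correction (analyzed in the last paragraph).

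I would then invoke Wright's circle method (or Ingham's Tauberian theorem), precisely as in \cite{GOT, COS} and in the proof of Theorem \ref{thm:normal}, to convert these $q\to 1^{-}$ asymptotics into asymptotics for the $[q^{2n}]$-coefficients. Dividing by the known asymptotic of $q(n)$ and expanding carefully produces the explicit mean $\widehat{\mu}_{t,2n}$ and variance $\widehat{\sigma}_{t,2n}^{2}$, with the $\delta_{2|t}$ terms arising precisely from the even-case factor. For the distributional conclusion, the method of moments requires showing that every $k$-th centered and normalized moment of $\widehat{N}^{\mathcal{DD}}_{t,2n}$ converges to the $k$-th standard Gaussian moment; as in the proof of Theorem \ref{thm:normal}, this reduces to verifying that all cumulants of order $\ge 3$ are of order smaller than $\widehat{\sigma}_{t,2n}^{k}$, which follows from the same Wright circle method estimates once the leading orders are identified.

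The main obstacle is the square-root factor in the even case. Each summand $(\pm\sqrt{1-x}\,q^{t};-q^{t})_{\infty}$ has a branch point at $x=1$, but writing $f(a) = (aq^{t};-q^{t})_{\infty}$ as an entire function of $a$ with $f(0)=1$, the symmetric sum $\tfrac{1}{2}(f(\sqrt{1-x})+f(-\sqrt{1-x}))$ is automatically even in $\sqrt{1-x}$, hence an analytic power series in $(1-x)$. The first two Taylor coefficients of this series — the only ones that influence the mean and variance — can be extracted from $c_{2}$ and $c_{4}$ in $f(a) = \sum c_{n} a^{n}$, which are explicit Lambert-type $q$-series obtained from $f(a) = \exp\bigl(-\sum_{m\ge 1} a^{m} q^{tm}/(m(1-(-q^{t})^{m}))\bigr)$. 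Careful bookkeeping of these two coefficients yields exactly the $\delta_{2|t}$ shifts predicted by the heuristic preceding the theorem, after which the remaining analysis parallels the odd case and follows Theorem \ref{thm:normal} essentially line by line.
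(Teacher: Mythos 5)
Your computation of the mean and variance matches the paper's: both apply Wright's circle method to $\left(x\frac{\partial}{\partial x}\right)^k\widehat{F}_t(x;q)\big|_{x=1}$ for $k=1,2$, and both resolve the even-$t$ branch point by observing that the symmetric sum $\tfrac12\bigl((\sqrt{1-x}\,q^t;-q^t)_\infty+(-\sqrt{1-x}\,q^t;-q^t)_\infty\bigr)$ is even in $\sqrt{1-x}$ — the paper just makes this explicit via Euler's identity $(-aq;q)_\infty=\sum_{n\ge0}a^nq^{n(n+1)/2}/(q;q)_n$, which yields the power series $\sum_{n\ge0}(x-1)^nq^{tn(2n+1)}/\bigl((-q^t;q^{2t})_n(q^{2t};q^{2t})_n\bigr)$, equivalent to your $c_2,c_4$ bookkeeping. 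Where you genuinely diverge is the distributional conclusion. The paper does \emph{not} use the method of moments (and neither does its proof of Theorem~\ref{thm:normal}, contrary to your attribution): it first proves a bivariate asymptotic for $\widehat{dd}_t(2n;x)$ uniformly for real $x$ near $1$ (Theorem~\ref{thm:shift_asym}, obtained almost for free by writing $\widehat{F}_t(x;q)$ as $(1+\sqrt{x})A_1(\sqrt{x};q^2)$ for odd $t$ and a combination of $B_1^{\pm}(\sqrt{x};q^2)$ for even $t$, so Lemmas~\ref{lem:asymp_gen_odd} and~\ref{lem:asymp_gen_even} apply verbatim), then evaluates the moment generating function at $x=e^{r/\widehat{\sigma}_{t,2n}}$ and invokes Curtiss' theorem (Theorem~\ref{thm:Curtiss}). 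This handles all moments at once. Your route — showing every cumulant of order $\ge 3$ is $o(\widehat{\sigma}_{t,2n}^{\,k})$ — is valid in principle (the normal law is moment-determined), but it is the one step you leave essentially unargued: it requires a bound on the $k$-th cumulant \emph{uniform in the sense of holding for every fixed $k$}, e.g.\ that each is $O(\sqrt{n})$, which needs an inductive or structural argument about the pole order of $\left(x\frac{\partial}{\partial x}\right)^k\log\widehat{F}_t$ at $q\to1$, not just the $k=1,2$ computations. If you instead reuse your own uniform $q\to1$ asymptotics at a fixed $x=e^{r/\widehat{\sigma}}$ near $1$, you recover the paper's cleaner argument with no extra work.
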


To prove Theorems~\ref{thm:normal}~and~\ref{thm:shift_normal}, we need to understand the asymptotics of $dd_{t} (2n;x)$. While Griffin, Ono, and Tsai \cite{GOT} and Craig, Ono, and Singh \cite{COS} use the saddle point method to this end, we, on the other hand, will employ the circle method. As the generating function $F_t (x;q)$ becomes more complicated, bounding its values at the minor arc would be very demanding if we applied the saddle point method. Moreover, if necessary, the circle method can be easily extended to get sharper estimates on $dd_{t} (2n;x)$ and $\widehat{dd}_{t} (2n;x)$.

The rest of the paper is organized as follows. In Section~\ref{sec:prelim}, we introduce some $q$-series transformation formulas, necessary generating functions, and analytic lemmas for later use. In Section~\ref{sec:gen_ftn}, we summarize the basic properties of the Littlewood decomposition. We then derive the generating functions $F_t(x;q)$ and $\widehat{F}_t (x;q)$ in Theorems~\ref{thm:gen_F_t} and \ref{thm:shift_gen} using $q$-series manipulations and combinatorial arguments in Sections~\ref{subsec:Ft_gen} and~\ref{subsec:Fhat_getn}. In Section~\ref{sec:asym_gen}, we investigate the asymptotic behavior of the generating function $F_t(x;q)$ near the root of unity, for which we employ the circle method. In Section~\ref{sec:circle}, we derive an asymptotic formula for $dd_t (2n;x)$ using the circle method. Finally, in Section~\ref{sec:normal}, we prove Theorems~\ref{thm:normal} and~\ref{thm:shift_normal} using the asymptotic formulas of $dd_t (2n;x)$ and $\widehat{dd}_t (2n;x)$, the moment generating function, and Wright's circle method.

\section{Preliminaries} \label{sec:prelim}

\subsection{\texorpdfstring{$q$}{}-Series} \label{subsec:q_series}

To get the generating functions, we will use the $q$-binomial theorem \cite[Appendix II.3]{GR}
\begin{equation}\label{eqn:q_binom}
	\sum_{n \geq 0} \frac{ (a;q)_{n}}{(q; q)_{n}} z^n = \frac{ (az;q)_{\infty}}{(z; q)_{\infty}},
\end{equation}
and the Heine transformation \cite[Appendix III.1]{GR}
\begin{equation} \label{eqn:Heine}
	\sum_{n \geq 0} \frac{ (a;q)_{n} (b;q)_{n}}{ (c ; q)_{n} (q;q)_{n}} z^n = \frac{ (b; q)_\infty (az;q)_\infty}{ (c;q)_\infty (z; q)_\infty } \sum_{n \geq 0} \frac{ (c/b ; q)_n (z; q)_n }{ (az ; q)_n (q;q)_n} b^n.
\end{equation}

For a positive integer $t$, a partition is called a {\it $t$-core} partition if it has no hooks of length divisible by $t$. The generating function of doubled distinct $t$-core partitions is given in \cite[eq. (8.1)]{GKS}. 
\begin{lemma}\label{lem:t-core_gen}
	Let $\DD_t$ be the set of doubled distinct $t$-core partitions. Then 
	\[
		\sum_{\omega \in \mathcal{DD}_t} q^{|\omega|} =
		\begin{dcases}
			 (-q^2;q^2)_\infty \frac{(q^{2t};q^{2t})_\infty^{\frac{t-1}2}}{(-q^{2t};q^{2t})_\infty} &\text{if $t$ is odd,}\\
			 (-q^2;q^2)_\infty \frac{(q^{2t};q^{2t})_\infty^{\frac{t-2}2}}{(-q^{t};q^{t})_\infty} &\text{if $t$ is even.}
		\end{dcases}
	\]
\end{lemma}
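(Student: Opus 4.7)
My plan is to invoke equation~(8.1) of \cite{GKS} directly, since the lemma is precisely that identity. For completeness I outline the structure of the Garvan--Kim--Stanton derivation, so that a self-contained proof could in principle be assembled.

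The argument proceeds via the Littlewood bijection $\mu \leftrightarrow (\mu_{\mathrm{core}}, (\mu^{(0)},\dots,\mu^{(t-1)}))$, which satisfies $|\mu| = |\mu_{\mathrm{core}}| + t\sum_i |\mu^{(i)}|$. First I would verify, by tracking how the bijection acts on the beta-numbers of $\lambda\lambda$, two structural facts: (a) the $t$-core of a doubled distinct partition is itself doubled distinct, i.e.\ lies in $\mathcal{DD}_t$; and (b) the $t$-quotient satisfies an explicit symmetry encoding the evenness of the diagonal hook lengths of $\lambda\lambda$, this symmetry taking slightly different forms for $t$ odd versus $t$ even.

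Granting (a) and (b), I would then write
\[
\sum_{\mu \in \mathcal{DD}} q^{|\mu|} \;=\; \Bigl(\sum_{\omega \in \mathcal{DD}_t} q^{|\omega|}\Bigr) \cdot G_t(q),
\]
where $G_t(q)$ is the generating function for the admissible $t$-quotients weighted by $q^{t\sum_i |\mu^{(i)}|}$. Substituting the elementary identity $\sum_{\mu \in \mathcal{DD}} q^{|\mu|} = (-q^2;q^2)_\infty$ together with the explicit product form of $G_t(q)$ given in \cite[Section~8]{GKS}, and solving algebraically for $\sum_{\omega \in \mathcal{DD}_t} q^{|\omega|}$, yields the two stated cases. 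The one genuinely nontrivial step is the beta-numbers bookkeeping in (b); the remaining manipulations are elementary $q$-product identities, and I would import (b) wholesale from \cite{GKS}.
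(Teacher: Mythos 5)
Your proposal matches the paper exactly: the paper offers no proof of this lemma, simply citing \cite[eq.~(8.1)]{GKS}, which is precisely what you do. Your supplementary sketch is also sound and consistent with the machinery the paper itself later develops (Theorem~\ref{thm:ltwd} and the identity $\sum_{\mu\in\mathcal{DD}}q^{|\mu|}=(-q^2;q^2)_\infty$, from which the two cases follow by dividing out the quotient generating functions), so nothing further is needed.
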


We also need the following generating function for the number of $t$-hooks among ordinary partitions \cite[Theorem 1.4]{Han}
\begin{equation} \label{eqn:Han_gen}
	\sum_{\la \in \mathcal{P}} x^{n_t (\la)} q^{|\la|} = \frac{ ((1-x)q^t;q^t)_{\infty}^t }{ (q;q)_{\infty}}.
\end{equation}

\subsection{Durfee square and Frobenius symbol}

For a partition $\lambda$, let $s$ be the largest integer such that $\lambda_s - s \ge 0$; this $s$ is the same as the side length of the largest square that can fit inside the Young diagram of $\lambda$. This largest square is called the {\it Durfee square} of $\lambda$. 

The {\it Frobenius symbol} of $\lambda$ is then the following two-rowed array
\[
	\mathfrak{F}(\lambda)=
	\begin{pmatrix} 
		\lambda_1 - 1 & \lambda_2 - 2 & \cdots & \lambda_s - s \\
		\lambda'_1 - 1 & \lambda'_2 - 2 & \cdots & \lambda'_s - s 
	\end{pmatrix}.
\]
As the Durfee square is unique, so is the Frobenius symbol. 
For the partition $(6,6,4,2,2)$ in Figure~\ref{fig3:diagram}, 
the side of its Durfee square is $3$ and the corresponding Frobenius symbol is
\begin{equation*}
	\mathfrak{F}\big((6,6,4,2,2)\big) = \begin{pmatrix} 5& 4& 1\\4& 3& 0 \end{pmatrix}.
\end{equation*}

\subsection{Wright's map} \label{subsec:wright}

In the Littlewood decomposition given in the next section, Wright's map \cite{Wright} is one of the essential components. For completeness, we provide a description of the map \cite{CKY}; however, for brevity, we present a modified version, which will work for our purpose. 
For a two-rowed array
\[
	\begin{pmatrix} 
    		a_1& a_2 & \ldots & a_{u} \\ b_1 & b_2 & \ldots & b_{v}
    	\end{pmatrix},
\]
with $a_1>\cdots >a_u\ge 0$ and $b_1>\cdots >b_v\ge 0$, we define a partition $\mu=(\mu_1,\ldots, \mu_{\ell})$ as follows:
\[
	(\mu_1,\ldots, \mu_{u})=(a_{1}+1-(u-v),\ldots, a_u+u-(u-v)) 
\]
and
\[
	(\mu_{u+1},\dots,\mu_{\ell})=(b_{1}-v+1,b_{2}-v+2,\dots,b_{v})'.
\]
Wright's map sends this two-rowed array to the partition $\mu$.

\subsection{Littlewood decomposition} \label{subsec:littlewood}

It is well known  \cite[Section 2.7]{JK} that there is a bijection 
\begin{align*}
	\varphi~:~  \mathcal{P} & \rightarrow \mathcal{P}_t \times \underbrace{\mathcal{P}\times \cdots \times \mathcal{P} }_{\text{$t$-copies}}\\
	\lambda & \mapsto \big( \lambda^{(t)},\lambda_{(0)},\dots,\lambda_{(t-1)}\big)
\end{align*}  
with $|\lambda|=|\lambda^{(t)}|+t\sum\limits_{i=0}^{t-1}|\lambda_{(i)}|$, where $\mathcal{P}_t$ is the set of $t$-core partitions.
This bijection $\varphi$ is called the {\it Littlewood decomposition} of $\lambda$ at $t$.
Garvan, Kim, and Stanton \cite{GKS} also constructed the Littlewood decomposition using $t$-residue diagrams and biinfinite words in the letters $N$ and $E$ (see \cite[Section 2]{GKS}). In this paper, we will follow the description given in \cite{CKY}. 

For a partition $\lambda$, we take its Frobenius symbol:
\begin{equation*}
	\mathfrak{F}(\lambda)=\begin{pmatrix} a_1& \cdots & a_s\\ b_1& \cdots & b_s \end{pmatrix}.
\end{equation*}
Let
\[
	a_i=t q_i +r_i \text{ with $0\le r_i \le t-1$}
\]
and
\[
	b_i=t q'_i +r'_i \text{ with $0\le r'_i\le t-1$}.
\]
We split the entries of the symbol into two-rowed arrays $\mathfrak{C}(\lambda_{(j)})$ for $j=0,\ldots, t-1$ as follows:
\begin{equation} \label{c_j}
	\mathfrak{C}(\lambda_{(j)})=\begin{pmatrix} a_{j,1} & \cdots & a_{j, u_j} \\ b_{j,1} & \cdots & b_{j,v_j} \end{pmatrix},
\end{equation}
where $a_{j,k}= q_i$ with $r_i=j$ for some $i$ and $b_{j,k}=q'_{i'}$ with $r'_{i'}=t-j-1$ for some $i'$. In other words, the top (resp. bottom) entries of $\mathfrak{C}(\lambda_{(j)})$ are the quotients of the top (resp. bottom) entries of $\mathfrak{F}(\lambda)$ whose residues are equal to $j$ (resp. $t-1-j$). We then apply the modified Wright's map to $\mathfrak{C}(\lambda_{(j)})$ to get $\mathfrak{F}(\lambda_{(j)})$. 
For example, let $t=3$ and 
\begin{equation*}
	\mathfrak{F}(\lambda)= \begin{pmatrix} 7 & 5& 4& 0 \\ 5& 4& 2&1 \end{pmatrix}.
\end{equation*}
Then
\begin{equation*}
	\mathfrak{C}(\lambda_{(0)})= \begin{pmatrix} & 0 \\ 1& 0\end{pmatrix},
    	\quad  \mathfrak{C}(\lambda_{(1)})= \begin{pmatrix} 2 & 1 \\ 1& 0\end{pmatrix}, \quad 
      	\mathfrak{C}(\lambda_{(2)} ) = \begin{pmatrix} 1 \\ ~\end{pmatrix},
\end{equation*}
from which we get
\[
	\lambda_{(0)}= (2), \quad \lambda_{(1)}=(3,3), \quad \lambda_{(2)}=(1).
\]

\subsection{Analytic properties} \label{subsec:analysis}

The {\it dilogarithm function} is defined by
\[
	\Li_2(z) = \sum_{n \geq 1} \frac{z^n}{n^2}
\] 
for $|z|<1$. Its duplication formula is given as follows:
\begin{equation}\label{eq:Li2_dup} 
	\Li_2(z) + \Li_2(-z) = \frac12 \Li_2(z^2).
\end{equation}

We recall the Euler--Maclaurin summation formula \cite[eq. (44)]{Z}.
\begin{lemma}\label{lem:E_M_sum}
	Suppose that $0 \leq \theta < \frac{\pi}{2}$ and let $D_\theta:=\{ r e^{i\alpha}: r \geq 0, |\alpha| \leq \theta\}$. Let $f:\mathbb{C} \to \mathbb{C}$ be holomorphic in a domain containing $D_\theta$ and assume that $f$ and all of its derivatives are of sufficient decay in $D_\theta$. If $f(w) \sim \sum_{n \geq 0} b_n w^n$ near $w=0$, then for any $0< a \leq 1$, we have as $w \to 0$ in $D_\theta$ that
	\[
		\sum_{m \geq 0} f\left( w(m+a)\right) = \frac1w \int_0^\infty f(x) dx - \sum_{n=0}^{N-1} b_n \frac{B_{n+1}(a)}{n+1} w^n + O (w^N), 
	\]
	where $B_n(a)$ is the Bernoulli polynomial.
\end{lemma}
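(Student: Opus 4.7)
The plan is to prove Lemma~\ref{lem:E_M_sum} via the Mellin transform, which is the natural machinery for converting an asymptotic expansion at the origin into a sharp Euler--Maclaurin-type expansion for a smoothed sum. Define
\[
\tilde f(s) := \int_0^\infty f(x)\, x^{s-1}\, dx.
\]
By the assumed decay of $f$ at infinity this is holomorphic in some right half-plane $\mathrm{Re}(s) > c_0$, and by the assumed asymptotic $f(w) \sim \sum_{n \ge 0} b_n w^n$ at the origin it continues meromorphically to $\mathbb{C}$ with simple poles at $s = -n$ ($n \ge 0$) of residue $b_n$. Mellin inversion then gives, for $c$ sufficiently large,
\[
f(y) = \frac{1}{2\pi i}\int_{c - i\infty}^{c + i\infty} \tilde f(s)\, y^{-s}\, ds.
\]

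Substituting $y = w(m+a)$ and summing over $m \ge 0$, with sum and integral swapped by absolute convergence when $c$ is taken large, yields
\[
\sum_{m \ge 0} f(w(m+a)) = \frac{1}{2\pi i}\int_{c - i\infty}^{c + i\infty} \tilde f(s)\, \zeta(s, a)\, w^{-s}\, ds,
\]
where $\zeta(s, a) = \sum_{m \ge 0}(m+a)^{-s}$ is the Hurwitz zeta function. Next I would shift the contour leftward past $s = 1, 0, -1, \ldots, -(N-1)$ to the vertical line $\mathrm{Re}(s) = -N + \tfrac12$, collecting residues. At $s = 1$ the simple pole of $\zeta(s, a)$ has residue $1$ and contributes $\tilde f(1) w^{-1} = \frac{1}{w}\int_0^\infty f(x)\, dx$. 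At each $s = -n$ with $0 \le n \le N-1$, the simple pole of $\tilde f$ has residue $b_n$, and multiplication by the classical value $\zeta(-n, a) = -B_{n+1}(a)/(n+1)$ contributes $-b_n \frac{B_{n+1}(a)}{n+1} w^n$. Summing reproduces precisely the main terms asserted in the lemma.

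The technical crux, and what I expect to be the main obstacle, is bounding the remaining integral on the shifted line by $O(w^N)$ uniformly for $w \in D_\theta$. On $\mathrm{Re}(s) = -N + \tfrac12$ one has $|w^{-s}| = |w|^{N - 1/2}\, e^{\mathrm{Im}(s)\,\arg w}$, and the exponential factor is the delicate one. Because $|\arg w| \le \theta < \pi/2$ by hypothesis, this factor is strictly dominated by the superpolynomial decay of $\tilde f(s)$ in the imaginary direction (obtained by repeated integration by parts in the Mellin integral, using decay of all derivatives of $f$ on $D_\theta$) combined with the standard polynomial bound $|\zeta(s, a)| \ll (1 + |\mathrm{Im}(s)|)^{C}$ on vertical lines. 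This Phragm\'en--Lindel\"of-type estimate is exactly where the sector condition $\theta < \pi/2$ is consumed, and routing the argument through a strip of fixed width (rather than a half-plane) makes the convergence robust as $w \to 0$ in $D_\theta$.
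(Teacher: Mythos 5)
The paper itself offers no proof of this lemma: it is quoted directly from Zagier's appendix \cite{Z}, eq.\ (44), so there is no internal argument to compare against. Your Mellin-transform architecture --- meromorphic continuation of $\tilde f$ with simple poles at $s=-n$ of residue $b_n$, pairing with the Hurwitz zeta function, and a leftward contour shift collecting $\tilde f(1)w^{-1}$ at $s=1$ and $b_n\zeta(-n,a)w^n=-b_nB_{n+1}(a)w^n/(n+1)$ at $s=-n$ --- is exactly the method of the cited source, and your residue bookkeeping is correct.

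The genuine gap is in the step you yourself flag as the crux. Superpolynomial decay of $\tilde f$ on a vertical line, which is what repeated integration by parts along the positive real axis yields (namely $|\tilde f(\sigma+i\tau)|\ll_k |\tau|^{-k}$ for every $k$), does \emph{not} dominate the factor $e^{\tau\arg w}$: once $\arg w\neq 0$ this factor grows genuinely exponentially in $|\tau|$, and no polynomial rate of decay beats $e^{\theta|\tau|}$. What actually saves the estimate is the holomorphy and decay of $f$ on the whole sector: rotate the ray of integration in $\tilde f(s)=\int_0^\infty f(x)x^{s-1}\,dx$ to $e^{i\phi}\mathbb{R}_{\geq 0}$ with $\phi=\operatorname{sgn}(\tau)\,\theta$ (legitimate by Cauchy's theorem and the assumed decay of $f$ in $D_\theta$), giving $\tilde f(s)=e^{i\phi s}\int_0^\infty f(re^{i\phi})r^{s-1}\,dr$ and hence an honest exponential factor $e^{-\theta|\tau|}$. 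This cancels $e^{\tau\arg w}$ exactly in the worst case $|\arg w|=\theta$, and the residual polynomial decay (from integrating the rotated integral by parts) then absorbs the polynomial growth of $\zeta(s,a)$ on vertical lines. So the hypothesis $\theta<\frac{\pi}{2}$ is consumed in rotating the Mellin contour, not in a Phragm\'en--Lindel\"of comparison of a polynomial against an exponential. A secondary, easily repaired point: stopping at $\Re(s)=-N+\tfrac12$ only gives $O(w^{N-1/2})$, which is weaker than the claimed $O(w^N)$; shift past $s=-N$ and absorb the additional residue $-b_NB_{N+1}(a)w^N/(N+1)=O(w^N)$ into the error term.
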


We also need the following integral approximation using $I$-Bessel function, which is a slightly modified version of \cite[Lemma 2.1]{BB}.
\begin{lemma}\label{lem:Bessel}
	Suppose that $k\in\mathbb{N}$, $s\in\mathbb{R}$, and let $\vartheta_1,\vartheta_2, A, B \in\mathbb{R}^+$  satisfy $k\ll\sqrt{n}$, $A\asymp\frac nk$, $B\ll\frac1k$, and $k\vartheta_1$, $k\vartheta_2\asymp\frac{1}{\sqrt{n}}$. Then we have
	\[
		\int_{\frac kn-ik\vartheta_1}^{\frac kn+ik\vartheta_2} z^{-s}e^{Az+\frac Bz}dz = 2\pi i\left(\frac AB \right)^\frac{s-1}{2}I_{s-1}\left(2\sqrt{AB}\right) + 
		\begin{cases}
			{O}\left(n^{s-\frac12}\right) & \text{ if } s\geq 0,\\ 
			{O}\left(n^{\frac{s-1}2}\right) & \text{ if } s <  0,\
		\end{cases}
	\]
	where $I_s(x)$ is the $I$-Bessel function.
\end{lemma}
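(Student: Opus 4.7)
The plan is to identify the integrand as a rescaled form of the Hankel-type integral representation of the $I$-Bessel function,
\[
I_{\nu}(2x) = \frac{1}{2\pi i}\int_{\mathcal{H}} w^{-\nu-1}\exp\!\left(x\Big(w+\frac{1}{w}\Big)\right)dw,
\]
valid for any contour $\mathcal{H}$ that loops once counterclockwise around the branch cut $(-\infty,0]$, and then to control the error incurred by extending the finite vertical segment to such a Hankel contour.

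First I would apply the substitution $w = z\sqrt{A/B}$, which converts the given integral into
\[
\left(\frac{A}{B}\right)^{\frac{s-1}{2}}\int_{\mathcal{C}} w^{-s}\exp\!\bigl(\sqrt{AB}\,(w+1/w)\bigr)\,dw,
\]
where $\mathcal{C}$ is the rescaled vertical segment. Under the stated hypotheses one checks that $\sqrt{AB}\asymp \sqrt{n}/k\to\infty$, while the endpoints $w_{0,\pm}$ of $\mathcal{C}$ stay of bounded modulus; moreover $\mathcal{C}$ lies strictly to the right of the branch cut and meets a neighborhood of the saddle point $w=1$ of the phase $w+1/w$, so it is well placed for completion to a Hankel contour.

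Next, I would close $\mathcal{C}$ into a Hankel-type contour by adjoining two horizontal rays extending from the endpoints $w_{0,\pm}$ to $-\infty$. Since $w^{-s}\exp(\sqrt{AB}(w+1/w))$ is holomorphic in the cut plane, Cauchy's theorem combined with the Hankel representation produces
\[
\int_{\mathcal{C}} w^{-s}e^{\sqrt{AB}(w+1/w)}\,dw = 2\pi i\,I_{s-1}\!\bigl(2\sqrt{AB}\bigr) - \int_{\text{rays}} w^{-s}e^{\sqrt{AB}(w+1/w)}\,dw.
\]
Multiplying by $(A/B)^{(s-1)/2}$ recovers the claimed main term, so the task reduces to bounding the ray integrals.

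On each added ray write $w = u \pm ic$ with $c$ bounded and $u \leq \Re(w_{0,\pm})$; then $\Re(w+1/w) = u + u/(u^2+c^2)$, which is bounded above by a strictly negative constant once $u$ is sufficiently negative, yielding exponential decay in the large parameter $\sqrt{AB}$. The polynomial factor $|w|^{-s}$ is uniformly bounded in the case $s\geq 0$ and grows only polynomially in $|u|$ when $s<0$; in either regime the ray integral is controlled by a negative power of $\sqrt{AB}\asymp \sqrt{n}/k$, and rescaling by $(A/B)^{(s-1)/2}\asymp n^{(s-1)/2}$ together with $k\ll\sqrt{n}$ absorbs all $k$-dependence into a power of $n$, producing the two stated error bounds. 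The main obstacle is the bookkeeping of these estimates uniformly over the allowed ranges of $A, B, \vartheta_1, \vartheta_2$: one must verify that the saddle-point region near $w=1$ is crossed transversally by $\mathcal{C}$, and that the polynomial-growth case $s<0$ indeed incurs the slightly better bound $n^{(s-1)/2}$, while for $s\geq 0$ the coarser bound $n^{s-1/2}$ suffices for the circle-method applications in later sections.
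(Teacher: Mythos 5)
Your Hankel-contour argument is correct and is essentially the standard proof of this statement: the paper itself gives no proof (the lemma is quoted as a slight modification of \cite[Lemma~2.1]{BB}), and the argument there is exactly what you describe --- complete the vertical segment to a loop around the cut $(-\infty,0]$, identify the loop integral with $I_{s-1}(2\sqrt{AB})$ via Hankel's representation, and bound the two added rays. Two of your side remarks are inessential and should not be leaned on (the rescaled endpoints need not have bounded modulus since only $B\ll\frac1k$ is assumed, and the Hankel representation does not require the contour to pass near the saddle $w=1$), but the ray estimates go through without them, using only $A\Re(z)\ll 1$, $B\Re(1/z)\ll 1$ on the rays and the decay of $e^{A\Re(z)}$ at rate $A\asymp\frac nk$.
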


\section{Generating functions} \label{sec:gen_ftn}

\subsection{Littlewood decomposition for doubled distinct partitions} \label{subsec:ltwd}

Let $t$ be a positive integer. The Littlewood decomposition $\phi_t$ maps a doubled distinct partition $\lala$ to $(\omega, \underline{\nu}):=(\omega; \nu_{(0)}, \dots, \nu_{(t-1)})$ such that the following are true (see \cite[Bijection 3]{GKS} and \cite[p.19]{Han}).

\begin{theorem}\label{thm:ltwd} 
\begin{itemize}
	\item[\it (DD1)] $\omega \in \mathcal{DD}_t$ is a $t$-core and $\nu_{(0)}, \dots, \nu_{(t-1)}$ are ordinary partitions.
	\item[\it (DD2)] We have that $\nu_{(i)}= \nu_{(t-i)}'$ for each $i=1, \dots, \lceil t/2 \rceil-1$. Moreover, $\mu\mu:=\nu_{(0)} \in \mathcal{DD}$.
	\item[\it (DD2')] If $t$ is even, then $\pi:=\nu_{\left( t/2 \right)} \in \mathcal{SC}$.
	\item[\it (DD3)] We have that
	\[
		|\lala| = \begin{cases} |\omega| + 2t \sum_{i=1}^{\frac{t-1}2} \left| \nu_{(i)} \right| + t |\mu\mu| & \text{if $t$ is odd,}\\
		|\omega| + 2t \sum_{i=1}^{\frac{t-2}2} \left| \nu_{(i)} \right| + t |\mu\mu|  +t |\pi|& \text{if $t$ is even.}\\
		\end{cases}
	\]
	\item[\it (DD4)] The number of $t$-hooks in $\lala$ is the sum of the number of $1$-hooks in the $t$-quotient, that is
	\[
		n_t(\lala) = \sum_{i=0}^{t-1} n_1(\nu_{(i)}). 
	\] 
\end{itemize}
\end{theorem}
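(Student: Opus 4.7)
The plan is to work entirely within the Frobenius symbol formalism of Section~\ref{subsec:littlewood}, using the fact that a partition lies in $\DD$ precisely when its Frobenius symbol has the shape
\[
    \mathfrak{F}(\lala) = \begin{pmatrix} a_1 & a_2 & \cdots & a_s \\ a_1-1 & a_2-1 & \cdots & a_s-1 \end{pmatrix},
\]
with $a_1 > \cdots > a_s \geq 1$. Writing each $a_i = tq_i + r_i$ with $0 \leq r_i < t$, the identity $a_i - 1 = tq'_i + r'_i$ forces $(q'_i, r'_i) = (q_i, r_i - 1)$ when $r_i \geq 1$, and $(q'_i, r'_i) = (q_i - 1, t - 1)$ when $r_i = 0$. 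Substituting these into the prescription \eqref{c_j}, the two-rowed arrays $\mathfrak{C}(\nu_{(j)})$ attached to $\lala$ take the following explicit forms: for $1 \leq j \leq t-1$, the top row of $\mathfrak{C}(\nu_{(j)})$ consists of the $q_i$ with $r_i = j$ and the bottom row of the $q_i$ with $r_i = t - j$; while for $j = 0$, the top row consists of the $q_i$ with $r_i = 0$ and the bottom row of the corresponding $q_i - 1$.

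This single residue calculation drives most of the individual claims. For (DD2) with $1 \leq j \leq \lceil t/2 \rceil - 1$, the array $\mathfrak{C}(\nu_{(t-j)})$ is exactly $\mathfrak{C}(\nu_{(j)})$ with its two rows swapped; combined with the key auxiliary fact that Wright's map from Section~\ref{subsec:wright} intertwines row-swap of the input array with conjugation of the output partition, this yields $\nu_{(t-j)} = \nu_{(j)}'$. For the $j = 0$ case of (DD2), the two rows of $\mathfrak{C}(\nu_{(0)})$ have equal length and the bottom row equals the top minus $1$ entry-wise, which is precisely the doubled-distinct Frobenius shape; hence $\nu_{(0)} \in \DD$. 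For (DD2') with $t$ even and $j = t/2$, the two rows of $\mathfrak{C}(\nu_{(t/2)})$ coincide, so Wright's map yields $\nu_{(t/2)} \in \SC$.

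Claims (DD1), (DD3), and (DD4) are known consequences of the general Littlewood decomposition. Specifically, the doubled-distinct property of the $t$-core $\omega$ in (DD1) is part of Bijection~3 in \cite{GKS}; it can also be read off from the residue analysis above, since the Frobenius columns of $\lala$ that are absorbed into the quotients are removed in matched doubled-distinct pairs, so the surviving columns still satisfy the $a_i - 1$ relation. Claim (DD3) is immediate from the general size identity $|\lala| = |\omega| + t \sum_{j} |\nu_{(j)}|$ combined with $|\nu_{(j)}| = |\nu_{(t-j)}|$ provided by (DD2), and (DD4) is the classical identity $n_t(\lambda) = \sum_{j} n_1(\nu_{(j)})$ recorded in \cite{GKS, Han}.

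The main obstacle I anticipate is verifying that Wright's map actually converts row-swap of the input two-rowed array into conjugation of the output partition. This requires carefully unpacking the piecewise definition of $\mu_i$ in Section~\ref{subsec:wright}, tracking how the offset $u - v$ changes sign and how the explicit conjugate block $(b_1 - v + 1, \ldots, b_v)'$ interchanges role with the linear block $a_i + i - (u - v)$ after the swap; the cleanest route is likely through the underlying beta-number description, where conjugation of the partition reverses and negates the beta set, and row-swap of the Frobenius data has exactly the same effect.
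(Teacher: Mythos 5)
The paper does not actually prove Theorem~\ref{thm:ltwd}: it is quoted as a known result, with a pointer to \cite[Bijection 3]{GKS} and \cite[p.19]{Han}. Your proposal instead derives (DD2), (DD2$'$), (DD3) directly from the Frobenius-symbol description of the Littlewood decomposition in Section~\ref{subsec:littlewood}, and that derivation is correct: since $\mathfrak{F}(\lala)$ has bottom row equal to the top row minus one, your residue computation correctly identifies $\mathfrak{C}(\nu_{(0)})$ as having the doubled-distinct Frobenius shape (equal row lengths, bottom $=$ top $-1$, with $q_i\ge 1$ whenever $r_i=0$ so no negative entries arise), $\mathfrak{C}(\nu_{(t/2)})$ as having identical rows, and $\mathfrak{C}(\nu_{(t-j)})$ as the row-swap of $\mathfrak{C}(\nu_{(j)})$. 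The one ingredient you flag but do not prove --- that the modified Wright's map intertwines row-swap with conjugation --- is indeed true (swapping the two rows swaps the roles of the $N$ and $E$ steps in the underlying edge-sequence, which is exactly conjugation), and it can be checked either from the piecewise formula or by the beta-set argument you sketch; this is also the mechanism the paper implicitly relies on when it manipulates $\mathfrak{C}(\nu_{(j)})$ in the proof of Theorem~\ref{thm:ltwd2}, so closing it would make your argument self-contained. The only point that genuinely does not follow from your column-by-column analysis is the part of (DD1) asserting $\omega\in\DD_t$: the $t$-core is not obtained by deleting Frobenius columns, so the ``matched pairs'' remark is not a proof, and your fallback to \cite[Bijection 3]{GKS} there is the right move --- and is exactly what the paper does for the entire theorem. (DD3) and (DD4) are correctly reduced to the standard size and hook identities of the Littlewood decomposition together with $|\nu_{(i)}|=|\nu_{(t-i)}|$ from (DD2). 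In short, your route buys a self-contained verification of the structural claims from the \cite{CKY}-style description, at the cost of one auxiliary lemma about Wright's map, whereas the paper simply outsources everything to \cite{GKS} and \cite{Han}.
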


To compute $\widehat{n}_t(\lala)$, we need to analyze how the $t$-hooks of $\lala$ are distributed to the quotient of $\lala$ under the Littlewood decomposition. 

\begin{proposition}\label{prop:ntdt1}
For a strict partition $\la$ and its corresponding doubled distinct partition $\lala$, 
\begin{align}\label{eq:nt_dt}
	n_t(\lala)=
	\begin{cases}
		2\widehat{n}_t(\lala)+1\quad&\text{ if $t \notin \lambda$ and $t/2 \in \lambda$}, \\
		2\widehat{n}_t(\lala)-1\quad&\text{ if $t \in \lambda$ and  $t/2 \notin \lambda$}, \\
		2\widehat{n}_t(\lala) \quad&\text{ otherwise.}
	\end{cases}
\end{align}
Here, $t/2 \notin \lambda$ when $t$ is odd.  
\end{proposition}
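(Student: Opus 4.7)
The plan is to decompose the Young diagram of $\lala$ into its Durfee square, the arm $A$ to the right, and the leg $L$ below, compute the hook lengths in each piece, and then compare the arm- and leg-hook multisets. From the Frobenius description of doubled distinct partitions (Section~\ref{subsec:ltwd}), if $\la=(\la_1,\ldots,\la_\ell)$ then
\[
\mathfrak{F}(\lala)=\begin{pmatrix}\la_1 & \cdots & \la_\ell \\ \la_1-1 & \cdots & \la_\ell-1\end{pmatrix},
\]
so $\lala$ has Durfee side $\ell$ with $\lala_i=\la_i+i$ and $\lala'_i=\la_i+i-1$ for $i\le\ell$. Setting $\mu_i:=\la_i+i-\ell$ and $\nu_i:=\la_i+i-1-\ell$, the arm $A$ has row lengths $(\mu_1,\ldots,\mu_\ell)$ (which is a partition since $\la$ is strict), the leg $L$ has column lengths $(\nu_1,\ldots,\nu_\ell)$, and the two are related by $\mu_i=\nu_i+1$.

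Next I would carry out three direct hook computations from the formula $\mathrm{hook}(i,j)=\lala_i-j+\lala'_j-i+1$. Substituting the values above gives: the hook of a Durfee box $(i,j)$ in $\lala$ is $\la_i+\la_j$, symmetric in $i,j$; the hook of an arm box $(i,\ell+k)$ in $\lala$ equals the hook of $(i,k)$ in $A$; and the hook of a leg box $(\ell+k',j)$ in $\lala$ equals the hook of $(k',j)$ in $L$. In particular the diagonal hook at $(i,i)$ has length $2\la_i$, so since $\la$ is strict the number $d_t(\lala)$ of $t$-hooks on the diagonal equals $\delta_{t/2\in\la}$ (which is vacuously $0$ when $t$ is odd). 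Writing $\widehat n_t=\widehat n_t(\lala)$ and $b_t=b_t(\lala)$ for the above- and below-diagonal $t$-hook counts of $\lala$, the Durfee contributions to $\widehat n_t$ and $b_t$ match by the $i\leftrightarrow j$ symmetry of $\la_i+\la_j$, giving
\[
\widehat n_t-b_t=\#\{t\text{-hooks of }A\}-\#\{t\text{-hooks of }L\}.
\]

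The key combinatorial identity, which is the main new ingredient, is that as multisets the hooks of $A$ equal the hooks of $L$ together with $\{\la_1,\ldots,\la_\ell\}$. To establish it, I would let $B$ be the partition obtained from $A$ by deleting its first column. Then $B$ has row lengths $(\mu_i-1)_{i=1}^\ell=(\nu_i)_{i=1}^\ell$, which is precisely the conjugate of $L$; since hook multisets are conjugation-invariant, the hooks of $B$ coincide with those of $L$. On the other hand, deleting the first column of $A$ removes exactly the $\ell$ hooks
\[
A_i-1+A'_1-i+1=\mu_i+\ell-i=\la_i\qquad(i=1,\ldots,\ell),
\]
where $A'_1=\ell$ because $\mu_\ell=\la_\ell\ge 1$. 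Since $\la$ is strict, this gives $\widehat n_t-b_t=\delta_{t\in\la}$.

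Combining the two counts with $n_t(\lala)=\widehat n_t+d_t+b_t$ yields
\[
n_t(\lala)-2\widehat n_t(\lala)=d_t(\lala)-\bigl(\widehat n_t-b_t\bigr)=\delta_{t/2\in\la}-\delta_{t\in\la},
\]
which recovers exactly the three cases in the proposition. The delicate point in this plan is the identification of the first-column hooks of $A$ with the parts of $\la$; once that identity is in place, everything else is bookkeeping from the Frobenius symbol of $\lala$.
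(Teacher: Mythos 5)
Your proof is correct and follows essentially the same route as the paper's: both reduce the claim to the symmetry of the Durfee-square hooks $\la_i+\la_j$, the identification of the diagonal hooks with $2\la_i$ and of the hooks in column $\ell+1$ with the parts $\la_i$, and a pairing of the remaining arm and leg $t$-hooks. The only cosmetic difference is that you package that last pairing as the conjugation identity (arm minus its first column) $=$ (leg)$'$ plus the conjugation-invariance of hook multisets, whereas the paper writes out the explicit index bijection $h_{i,j}(\lala)=h_{j-1,i}(\lala)$ in its Cases 3 and 4.
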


\begin{proof}
Let $h_{i,j}(\la)$ be the hook length of the box $(i,j)$ in the Young diagram of $\lambda$. Then
\begin{equation}\label{hook}
	h_{i,j}:=h_{i,j}(\lambda)=(\lambda_i-j)+(\lambda'_j-i)+1= \lambda_i+\lambda'_j -(i+j)+1.
\end{equation}

For a strict partition $\lambda$, it follows from the construction of the doubled distinct partition $\lala$ that
\begin{equation}\label{conj}
	(\lala)'_i =\begin{cases} (\lala)_i-1 & \text{ for $1\leq i\leq \ell $}, \\
		\ell & \text{ for $i=\ell+1$},\\
		(\lala)_{i-1} & \text{ for $i>\ell+1$},
	\end{cases}
\end{equation}
where $\ell$ is the number of parts of $\lambda$.  Also, it follows from the construction of $\lala$ that 
\begin{equation} \label{lala-la}
(\lala)_{i}= \la_i+i \quad \text{ for $1\le i\le \ell$.}
\end{equation}

We consider $h_{i,j}$ for $\lala$  in the following four cases. The identity in each case follows from  \eqref{hook} and \eqref{conj}. 
\begin{enumerate}
	\item[Case 1.] For $1\leq i,j \leq \ell$, 
	\begin{equation}
		h_{i,j} (\lala)=(\lala)_i+(\lala)_j -(i+j)=h_{j,i}(\lala). \label{hook-1}
	\end{equation}
	\item[Case 2.] For $1\leq i\leq \ell$ and $ j=\ell+1 $, 
	\begin{equation}
		h_{i,\ell+1}(\lala)= (\lala)_i  + \ell - (i+ \ell+1) +1=(\lala)_i -i. \label{hook-2}
	\end{equation}
	\item[Case 3.] For $1\leq i\leq \ell$ and $\ell+1<j $, 
	\[
		h_{i,j}(\lala)= (\lala)_i + (\lala)_{j-1} -(i+j) +1 = h_{j-1, i}(\lala).
	\]
	\item[Case 4.] For $1\leq j \leq \ell<i $, 
	\[
		h_{i,j}(\lala)= (\lala)_i+(\lala)_j-1-(i+j)+1 =  h_{j, i+1}(\lala). 
	\]
\end{enumerate}

Due to the symmetries in Cases 1, 3, and 4,  boxes $(i,j)$  in the Young diagram can be paired up based on the same hook length unless $i=j$ or $j=\ell+1$. 

We now consider $h_{i,i}(\lala)$ and $h_{i, \ell+1}(\lala)$ for $1\leq i\leq \ell $.
In Case 1, if $h_{i,i}(\lala)=t$, then $2\la_i=t$ by \eqref{lala-la}. Also, in Case 2, if $h_{i,\ell+1}(\lala)=t$, then $\la_i=t$ by \eqref{lala-la}. Thus, if $t/2\in \la$ but $t\not\in \la$, then there is a $t$-hook on the main diagonal, and all the other $t$-hooks below the diagonal can be paired up with the $t$-hooks above the diagonal, so 
\[
	n_t(\lala)=2\widehat{n}_t(\la\la)+1.
\]
The other cases can similarly be verified, and we omit the details. 
\end{proof}

We now prove an analogue of (DD4) in Theorem~\ref{thm:ltwd} to track $\widehat{n}_t(\lala)$.

\begin{theorem}\label{thm:ltwd2}
	For a positive integer $t$,
	\[
		\widehat{n}_t(\lala) =
			\begin{cases}
				\widehat{n}_1(\nu_{(0)}) + \frac{1}{2} \sum_{i=1}^{t-1} n_1(\nu_{(i)}) &\text{ if $t$ is odd,}\\
				\widehat{n}_1(\nu_{(0)}) + \widehat{n}_1(\nu_{(t/2)})+\frac{1}{2} \sum_{i=1}^{t/2- 1} \big(  n_{1} (\nu_{(i)}) +  n_1(\nu_{(t-i)})\big) &\text{ if $t$ is even.}
			\end{cases}
	\]
\end{theorem}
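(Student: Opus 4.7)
The plan is to combine Proposition~\ref{prop:ntdt1}, part (DD4) of Theorem~\ref{thm:ltwd}, and the symmetry (DD2)--(DD2') of the $t$-quotient, bootstrapping the $t=1$ case of Proposition~\ref{prop:ntdt1} (and a self-conjugate analogue of it) into the general statement.

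First I would apply Proposition~\ref{prop:ntdt1} to $\lala$ itself to write
\[
	\widehat{n}_t(\lala)=\tfrac12\bigl(n_t(\lala)+\delta_{t\in\lambda}-\delta_{t/2\in\lambda}\bigr),
\]
and then use (DD4) to rewrite $n_t(\lala)=\sum_{i=0}^{t-1}n_1(\nu_{(i)})$. For $1\le i\le \lceil t/2\rceil-1$, the symmetry $\nu_{(t-i)}=\nu_{(i)}'$ from (DD2) immediately gives $n_1(\nu_{(t-i)})=n_1(\nu_{(i)})$, which explains how the factor $\tfrac12\sum_{i=1}^{t-1}$ (odd case) and $\tfrac12\sum_{i=1}^{t/2-1}(n_1(\nu_{(i)})+n_1(\nu_{(t-i)}))$ (even case) arise.

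Next I would handle the ``diagonal'' quotient parts. Since $\mu\mu=\nu_{(0)}\in\mathcal{DD}$ by (DD2), Proposition~\ref{prop:ntdt1} applied with $t=1$ gives
\[
	n_1(\nu_{(0)})=2\widehat{n}_1(\nu_{(0)})-\delta_{1\in\mu}
\]
(the term $\delta_{1/2\in\mu}$ vanishes since $\mu$ has integer parts). For even $t$, $\pi=\nu_{(t/2)}\in\mathcal{SC}$ by (DD2'), and I would prove the parallel identity
\[
	n_1(\pi)=2\widehat{n}_1(\pi)+\delta_{\pi_s=s}
\]
for self-conjugate partitions with Durfee square side $s$: the $1$-hooks are corner boxes, which are paired by transposition except for a possible diagonal corner, which (since $\pi$ weakly decreases and $\pi_i\ge i$ for $i\le s$) can only sit at $(s,s)$ and does so precisely when $\pi_s=s$. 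Substituting these identifications and the Prop.~\ref{prop:ntdt1} formula for $\widehat n_t(\lala)$ into $n_t(\lala)=\sum_{i=0}^{t-1}n_1(\nu_{(i)})$ and solving, the stated formula will emerge up to the ``error''
\[
	\delta_{t\in\lambda}-\delta_{t/2\in\lambda}-\delta_{1\in\mu}+\delta_{2\mid t}\,\delta_{\pi_s=s},
\]
which must vanish identically.

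The main obstacle, and the step on which the proof really turns, is therefore establishing the two combinatorial identifications
\[
	\delta_{1\in\mu}=\delta_{t\in\lambda}\qquad\text{and}\qquad \delta_{\pi_s=s}=\delta_{t/2\in\lambda}\ \ (t\text{ even}).
\]
To prove them I would unpack the Littlewood decomposition at the level of the Frobenius symbol (Section~\ref{subsec:littlewood}): the symbol of $\lala$ is $\bigl(\begin{smallmatrix}\lambda_1&\cdots&\lambda_\ell\\\lambda_1-1&\cdots&\lambda_\ell-1\end{smallmatrix}\bigr)$, and ``$t\in\lambda$'' is equivalent to the existence of some $\lambda_i=t$, which contributes a top entry $q_i=1,\ r_i=0$ and a bottom entry $q_i'=0,\ r_i'=t-1$ to $\mathfrak{C}(\lambda_{(0)})$; running Wright's map on $\mathfrak{C}(\lambda_{(0)})=\mathfrak{C}(\mu\mu)$ then shows this is precisely the condition ``$1\in\mu$''. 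Likewise, ``$t/2\in\lambda$'' (for even $t$) contributes a matched pair of a top entry with $r_i=t/2$ and a bottom entry with $r_i'=t/2-1$, which after Wright's map produce a $\bigl(\begin{smallmatrix}0\\0\end{smallmatrix}\bigr)$ column in $\mathfrak{F}(\pi)$, i.e. $\pi_s=s$; the converse directions are verified the same way. Once these two bijective correspondences are in hand, the formal manipulation sketched above yields the theorem in both parities of $t$.
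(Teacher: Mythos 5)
Your proposal is correct, and its crucial combinatorial input coincides with the paper's: both arguments ultimately come down to tracking the Frobenius columns $\binom{t}{t-1}$ and $\binom{t/2}{t/2-1}$ of $\mathfrak{F}(\lala)$ through the Littlewood decomposition, finding that the first lands as the terminal column $\binom{1}{0}$ of $\mathfrak{C}(\nu_{(0)})$ (equivalently $1\in\mu$) and the second as the terminal column $\binom{0}{0}$ of $\mathfrak{C}(\nu_{(t/2)})$ (equivalently $\pi_s=s$); these are precisely Cases 1 and 2 of the paper's proof. Where you differ is in the final assembly. The paper upgrades (DD4) to a hook-by-hook correspondence --- the diagonal $t$-hook of $\lala$ goes to the diagonal $1$-hook of $\pi$, the $t$-hook in column $\ell+1$ goes to the $1$-hook just right of the Durfee square of $\nu_{(0)}$, and the remaining $t$-hooks pair off --- and then asserts that the count ``immediately follows.'' You instead stay at the level of cardinalities: three reflection-counting identities, namely $n_t(\lala)=2\widehat{n}_t(\lala)+\delta_{t/2\in\lambda}-\delta_{t\in\lambda}$ from Proposition~\ref{prop:ntdt1}, its $t=1$ specialization $n_1(\mu\mu)=2\widehat{n}_1(\mu\mu)-\delta_{1\in\mu}$, and the easy self-conjugate analogue $n_1(\pi)=2\widehat{n}_1(\pi)+\delta_{\pi_s=s}$ (your justification of which is sound), reduce the theorem to the two delta identities, which you then prove by the same Frobenius tracking. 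This buys something real: you never need to check that the conjugation pairing of off-diagonal $t$-hooks of $\lala$ is compatible with the bijection underlying (DD4), which is the step the paper's ``immediately follows'' leaves implicit. The small price is that you should note explicitly that $\mathfrak{C}(\nu_{(0)})$ and $\mathfrak{C}(\nu_{(t/2)})$ are balanced arrays (the same indices of $\lambda$ feed both rows), so Wright's map acts as the inverse Frobenius map on them and the terminal columns can be read off as you claim; with that remark added, your argument is complete.
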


To prove Theorem~\ref{thm:ltwd2}, we need to figure out how the $t$-hook of the box $(i,j)$ for $i=j\le \ell$ or $j=\ell+1$ is mapped under the Littlewood decomposition.  In the proof,  we will use the description of the Littlewood decomposition given in Section~\ref{subsec:littlewood}. 

\begin{proof}[Proof of Theorem~\ref{thm:ltwd2}]
Let $\mathfrak{F}(\lala)$ be the Frobenius partition of $\lala$. Then,
\[
	\mathfrak{F}(\lala)=\begin{pmatrix} \lambda_1 & \lambda_2 &  \cdots & \lambda_\ell \\ \lambda_1-1& \lambda_2-1 & \cdots & \lambda_\ell-1\end{pmatrix}.
\]

We have the following two cases. 

\begin{enumerate}
	\item[Case 1.] Suppose $h_{i,i}(\lala)=t$ for some $1 \le i\le \ell$. By \eqref{lala-la} and \eqref{hook-1},
    \[
		h_{i,i}(\lala)=2(\lala_i -i)= 2\lambda_i =t.
	\]
	So, if $t$ is odd, then there is no $t$-hook on the main diagonal of $\lala$. If $t$ is even, then 
	\[
		\binom{\lambda_i}{\lambda_i-1}=\binom{t/2}{t/2-1}  \in \mathfrak{F}(\lala) \longrightarrow \binom{0}{0} \in \mathfrak{C}(\nu_{(t/2)} ),
	\]
	and this $t$-hook becomes a $1$-hook on the main diagonal of $\nu_{(t/2)}$, which is the self-conjugate partition $\pi$ in the $t$-quotient of $\lala$.

	\item[Case 2.] Suppose $h_{i,\ell+1}(\lala)=t$ for some $1\le i\le \ell$. By \eqref{lala-la} and \eqref{hook-2},
	\[
		h_{i,\ell+1}(\lala)= \lambda_i =t.
	\]
	Then, 
	\[
		\binom{\lambda_i}{\lambda_i-1} =\binom{t}{t-1}  \in \mathfrak{F}(\lala) \longrightarrow \binom{1}{0} \in \mathfrak{C}(\nu_{(0)} ).
	\]
	Also, the column $\binom{1}{0}$ is the last column of $\mathfrak{C}(\nu_{(0)})$ because entries in Frobenius partitions are strictly decreasing in rows. Thus, the last hook on the main diagonal of $\nu_{(0)}$ is of length $2$. Since $\nu_{(0)}$ is a doubled distinct partition, we have $1 \in \mu$ for a strict partition $\mu$ with $\nu_{(0)}=\mu\mu$.
\end{enumerate}

As seen in Case~1, the $t$-hook on the main diagonal of $\lala$ corresponds to the $1$-hook on the main diagonal of the self-conjugate partition of $\nu_{(t/2)}$. Also, as seen in Case~2, the $t$-hook on the $\ell$-th column of $\lala$ corresponds to the $1$-hook on the first column to the right of the Durfee square of the doubled distinct partition $\nu_{(0)}$. The remaining $t$-hooks on neither the main diagonal nor the $\ell$-th column of $\lala$ correspond to the remaining $1$-hooks of the quotient. Therefore, the theorem immediately follows from this correspondence.  
\end{proof}

\subsection{The generating function \texorpdfstring{$F_t(x;q)$}{}} \label{subsec:Ft_gen}

From Theorem~\ref{thm:ltwd}, to find the generating function $F_t (x;q)$, we need to investigate the number of $1$-hooks, that is the number of corners along the doubled distinct partitions.

\begin{proposition}\label{prop:gen_1}
	\[
		F_1(x;q) = \sum_{\lala \in \mathcal{DD}} x^{n_1(\lala)} q^{|\lala|}
		=\frac{(-q^2;q^2)_{\infty}}{(1+x)} \bigg(((1-x^2)q^2;q^4)_{\infty} +  x ((1-x^2)q^4 ;q^4)_{\infty} \bigg).
	\]
\end{proposition}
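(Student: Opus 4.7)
The plan is to reduce the identity to a single Euler-type $q$-series evaluation, via a Frobenius parametrization of doubled distinct partitions together with a clean combinatorial formula for $n_1(\lala)$.

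First, write the Frobenius symbol of $\lala \in \DD$ as $\binom{a_1,\ldots,a_s}{a_1-1,\ldots,a_s-1}$ with $a_1>\cdots>a_s\geq 1$, and substitute $a_i=b_i+(s-i+1)$ so that $b_1\geq\cdots\geq b_s\geq 0$ is a partition of length at most $s$; then $|\lala|=s(s+1)+2|b|$. A careful corner analysis of $\lala$, using $\lala'_i=\lala_i-1$ for $i\leq s$ and $\lala'_{s+1}=s$, gives
\[
n_1(\lala)=2k^{\ast}(b)+\delta_{b_s=0},
\]
where $k^{\ast}(b)$ counts the distinct positive entries of $b$. Splitting the sum on the indicator $\delta_{b_s=0}$ and invoking the standard evaluation
\[
\sum_{\ell(b)\leq s} x^{k^{\ast}(b)}q^{|b|}=\frac{((1-x)q;q)_s}{(q;q)_s}
\]
(obtained from the multiplicity generating function, or by conjugating to partitions with parts $\leq s$) produces the finite-depth form
\[
F_1(x;q)=\sum_{s\geq 0}\frac{q^{s(s+1)}\bigl(1-(1-x)q^{2s+2}\bigr)\bigl((1-x^2)q^2;q^2\bigr)_s}{(q^2;q^2)_s}.
\]

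To pass from this to the claimed product formula, multiply through by $1+x$ and use $(1+x)\bigl(1-(1-x)q^{2s+2}\bigr)=(1+x)-(1-x^2)q^{2s+2}$ together with the telescoping
\[
(1-x^2)q^{2s+2}\bigl((1-x^2)q^2;q^2\bigr)_s=\bigl((1-x^2)q^2;q^2\bigr)_s-\bigl((1-x^2)q^2;q^2\bigr)_{s+1}.
\]
This converts $(1+x)F_1(x;q)$ into two sums of the form $\sum_s q^{s(s+1)}(a;q^2)_s/(q^2;q^2)_s$, one with $a=(1-x^2)q^2$ and, after pulling out the factor $1-(1-x^2)q^2$, one with $a=(1-x^2)q^4$. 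Each is evaluated via the master identity
\[
\sum_{s\geq 0}\frac{q^{s(s+1)}(a;q^2)_s}{(q^2;q^2)_s}=(-q^2;q^2)_\infty(aq^2;q^4)_\infty,
\]
which is the $q\to q^2$ specialization of a classical identity proved by expanding $(a;q)_s$ via the $q$-binomial theorem, swapping the order of summation, and applying Euler's identity followed by one more $q$-binomial evaluation.

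The main obstacle is the combinatorial identification $n_1(\lala)=2k^{\ast}(b)+\delta_{b_s=0}$: it requires a careful count of corners of $\lala$ above and below the main diagonal, together with the observation that the number of distinct upper parts and the number of distinct lower parts are always equal except for one extra upper corner (in column $s+1$) precisely when $b_s=0$. Once this identity is established, the remaining $q$-series manipulations above are routine.
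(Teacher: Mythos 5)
Your proof is correct and follows essentially the same route as the paper: the Frobenius-symbol parametrization with $b_i=a_i-(s-i+1)$ is the paper's Durfee-square decomposition in disguise (your dichotomy $b_s=0$ versus $b_s>0$ is exactly the paper's split of $\DD$ into $\mathcal{DD}_{(2)}$ and $\mathcal{DD}_{(1)}$ according to the hook length at the box $(s,s+1)$), your corner count $n_1(\lala)=2k^{\ast}(b)+\delta_{b_s=0}$ matches the paper's bookkeeping of distinct parts in the conjugate pair, and your ``master identity'' is precisely the evaluation \eqref{eq:G1} of $G_1$. The only substantive difference is the endgame: the paper evaluates a second series $G_2$ separately via Heine's transformation, whereas you telescope so that the single Lebesgue-type identity, applied at $a=(1-x^2)q^2$ and $a=(1-x^2)q^4$, suffices --- a small but genuine streamlining.
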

\begin{proof}
We prove it using the similar arguments from \cite[Section 3.1]{AAOS}.
For a doubled distinct partition $\lala$, note that if the side of its Durfee square is $n$, then the first column to the right of the Durfee square is of size $n$. Moreover, we can divide $\lambda\lambda$ into a rectangle of size $(n+1)\times n$ and a pair of partitions that are conjugate to each other.
To count the number of $1$-hooks, we split $\mathcal{DD}$ into $\mathcal{DD}_{(1)}$ and $\mathcal{DD}_{(2)}$, where $\mathcal{DD}_{(1)}$ (resp. $\mathcal{DD}_{(2)}$) is the set of doubled distinct partitions $\la\la$ such that the hook length of the box $(n,n+1)$ is $1$ (resp. is not $1$). See Figure \ref{fig:2types} for an example. 
\begin{figure}[ht!]
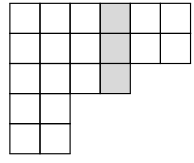
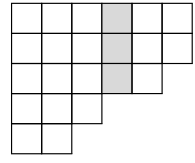

	\begin{subfigure}[b]{0.4\textwidth}
		\centering
		\ytableausetup{mathmode, boxsize=1em}	
		\begin{ytableau}
			*(white) &  &  & *(gray!30)  &  &    \\
	        & & & *(gray!30)  & &   \\
	        & & & *(gray!30)     \\
	        & \\
	        & 	
		\end{ytableau}	
		\caption{$\lala=(6,6, 4, 2, 2)$ }\label{fig:hooklength2} 
	\end{subfigure}
	\begin{subfigure}[b]{0.4\textwidth}
		\centering
		\ytableausetup{mathmode, boxsize=1em}
			 \begin{ytableau}
			*(white) &  &  & *(gray!30)  &  &    \\
	        & & & *(gray!30)  & &   \\
	        & & & *(gray!30)  &   \\
	        & &\\
	        & 	
		\end{ytableau}
		\caption{$\lala=(6,6, 5,3, 2)$ }\label{fig:hooklength1} 
	\end{subfigure}
	\caption{Two doubled distinct  partitions in $\mathcal{DD}_{(1)}$ and $\mathcal{DD}_{(2)}$.}
	\label{fig:2types}
\end{figure}	
Thus, we have
\[
	\sum_{\lala \in \mathcal{DD}} x^{n_1(\lala)} q^{|\lala|}=1+\sum_{\lala \in \mathcal{DD}_{(1)}} x^{n_1(\lala)} q^{|\lala|}+\sum_{\lala \in \mathcal{DD}_{(2)}} x^{n_1(\lala)} q^{|\lala|}.
\]

First, we obtain
\begin{align*}
	\sum_{\lala \in \mathcal{DD}_{(1)}} x^{n_1(\lala)} q^{|\lala|}=\sum_{n\ge 1} xq^{n^2+n} \prod_{j=1}^{n-1}(1+x^2q^{2j}+x^2q^{4j}+\cdots)=\sum_{n\ge 1} xq^{n^2+n}\prod_{j=1}^{n-1}\frac{1-(1-x^2) q^{2j}}{1-q^{2j}},
\end{align*}
because $ xq^{n^2+n}$ (resp. $\prod\limits_{j=1}^{n-1}(1+x^2q^{2j}+x^2q^{4j}+\cdots)$) accounts for the rectangle of size $(n+1)\times n$ (resp. the remaining partition pairs), and $x$ keeps track of the number of different parts, namely the number of $1$-hooks.  

Similarly, we get
\begin{align*}
	\sum_{\lala \in \mathcal{DD}_{(2)}} x^{n_1(\lala)} q^{|\lala|}&=\sum_{n\ge 1} q^{n^2+n}(x^2q^{2n}+x^2q^{4n}+\cdots) \prod_{j=1}^{n-1}(1+x^2q^{2j}+x^2q^{4j}+\cdots) \\
	&=\sum_{n\ge 1} \frac{x^2q^{n^2+3n}}{1-q^{2n}}\prod_{j=1}^{n-1}\frac{1-(1-x^2) q^{2j}}{1-q^{2j}}.
\end{align*}

Hence, we have 
\begin{align*}
	\sum_{\lala\in \mathcal{DD}} x^{n_1(\lala)} q^{|\lala|}
	&= 1+\sum_{n\ge 1} xq^{n^2+n} \prod_{j=1}^{n-1}\frac{1-(1-x^2) q^{2j}}{1-q^{2j}} +\sum_{n\ge 1} \frac{x^2 q^{n^2+3n} }{1-q^{2n}} \prod_{j=1}^{n-1} \frac{1-(1-x^2)q^{2j}}{1-q^{2j}}\\
	&=1+ x\sum_{n\ge 1} q^{n^2+n} \frac{(1-(1-x)q^{2n}) ((1-x^2)q^2;q^2)_{n-1}}{(q^2;q^2)_n}\\
	&=\frac{1}{x} \sum_{n\ge 0} q^{n^2+n} \frac{(1-(1-x)q^{2n}) ((1-x^2);q^2)_{n}}{(q^2;q^2)_n}.
\end{align*}

Let 
\begin{align*}
	G_1(X;q) :=  \sum_{n\ge 0}  \frac{  q^{n^2+n} (X ;q^2)_{n}}{(q^2;q^2)_n} \qquad\text{and}\qquad
	G_2(X;q) := \sum_{n\ge 0}  \frac{ q^{n^2+3n} (X ;q^2)_{n}}{(q^2;q^2)_n}.
\end{align*}
Next, apply Heine's transformation in \eqref{eqn:Heine} to $G_1(X;q)$ and $G_2(X;q)$ : 
\begin{align}
	G_1(X;\sqrt{q})&=\lim_{a\to \infty} \sum_{n\ge 0} \frac{(-aq;q)_n (X;q)_n}{(q;q)_n } \left(\frac{1}{a}\right)^n \nonumber\\
	&=(X;q)_{\infty} (-q;q)_{\infty} \sum_{n\ge 0} \frac{X^n}{(-q;q)_{n} (q;q)_n} \nonumber\\
	&=(X;q)_{\infty} (-q;q)_{\infty} \sum_{n\ge 0} \frac{X^n}{(q^2;q^2)_{n} }
	=(Xq;q^2)_{\infty} (-q;q)_{\infty}, \label{eq:G1}
\end{align}
where we employ the $q$-binomial theorem in \eqref{eqn:q_binom} for the last equality. Similarly,
\begin{align*}
	G_2(X;\sqrt{q})&=\lim_{a\to \infty} \sum_{n\ge 0} \frac{(-aq;q)_n (X;q)_n}{(q;q)_n } \left(\frac{q}{a}\right)^n \\
	&=(X;q)_{\infty} (-q^2;q)_{\infty} \sum_{n\ge 0} \frac{X^n}{(-q^2;q)_{n} (q;q)_n}\\
	&=(X;q)_{\infty} (-q;q)_{\infty} \sum_{n\ge 0} \frac{X^n (1-q^{n+1}) }{(q^2;q^2)_{n+1} }
	= \frac{(-q;q)_{\infty}}{X} \left( (Xq;q^2)_{\infty} - (X;q^2)_{\infty}\right).
\end{align*}

Therefore, we can conclude that
\begin{align*}
	\sum_{\lala\in \mathcal{DD}} x^{n_1(\lala)} q^{|\lala|}
	&=\frac{1}{x} \bigg( G_1(1-x^2;q) -(1-x) G_2(1-x^2;q)\bigg)\\
	&=\frac{(-q^2;q^2)_{\infty}}{(1+x)} \bigg( \big((1-x^2)q^2;q^4\big)_{\infty} +  x \big((1-x^2)q^4 ;q^4\big)_{\infty} \bigg). \qedhere
\end{align*}
\end{proof}

We are now ready to find the generating function $F_t (x;q)$.

\begin{proof}[Proof of Theorem~\ref{thm:gen_F_t}]
If $t$ is odd, by Theorem \ref{thm:ltwd}, Lemma \ref{lem:t-core_gen}, and \eqref{eqn:Han_gen}, we find that
\begin{align*}
	\sum_{\lala \in \mathcal{DD}} x^{n_t(\lala)} q^{|\lala|} &= \sum_{\omega \in \mathcal{DD}_t} q^{|\omega|} \left( \sum_{\nu \in \mathcal{P}} x^{2 n_1(\nu)} q^{2t |\nu|}\right)^{\frac{t-1}{2}} \sum_{\mu\mu \in \mathcal{DD}} x^{n_1(\mu\mu)} q^{t|\mu\mu|}\\
	&= \frac{(-q^2;q^2)_\infty \left( (1-x^2)q^{2t}; q^{2t}\right)_\infty^{\frac{t-1}{2}}}{(-q^{2t};q^{2t})_\infty} \sum_{\mu\mu \in \mathcal{DD}} x^{n_1(\mu\mu)} q^{t|\mu\mu|}.
\end{align*}
Similarly, if $t$ is even,
\begin{align*}
	\sum_{\lala \in \mathcal{DD}} x^{n_t(\lala)} q^{|\lala|} &= \sum_{\omega \in \mathcal{DD}_t} q^{|\omega|} \left( \sum_{\nu \in \mathcal{P}} x^{2 n_1(\nu)} q^{2t |\nu|}\right)^{\frac{t-2}{2}} \sum_{\mu\mu \in \mathcal{DD}} x^{n_1(\mu\mu)} q^{t|\mu\mu|}  \sum_{\pi \in \mathcal{SC}} x^{n_1(\pi)} q^{t|\pi|} \\
	&= \frac{(-q^2;q^2)_\infty \left( (1-x^2)q^{2t}; q^{2t}\right)_\infty^{\frac{t-2}{2}}}{(-q^{t};q^{t})_\infty} \sum_{\mu\mu \in \mathcal{DD}} x^{n_1(\mu\mu)} q^{t|\mu\mu|}  \sum_{\pi \in \mathcal{SC}} x^{n_1(\pi)} q^{t|\pi|}.
\end{align*}
The desired results follow from Proposition \ref{prop:gen_1} and \cite[Theorem 3.1]{AAOS}.
\end{proof}

\subsection{The generating function \texorpdfstring{$\widehat{F}_{t}(x;q)$}{}}\label{subsec:Fhat_getn}

To use Theorem~\ref{thm:ltwd2}, we first find the generating functions for $\widehat{n}_1(\la)$ in doubled distinct partitions and self-conjugate partitions.

\begin{proposition}\label{prop:1-hook}
	\begin{align*}
		\sum_{\lambda\lambda\in \mathcal{DD}} x^{\widehat{n}_1(\lala)} q^{|\lambda\lambda|}&=((1-x)q^2;q^4)_{\infty} (-q^2;q^2)_{\infty},\\
		\sum_{\pi \in \mathcal{SC}} x^{\widehat{n}_1(\pi)} q^{|\pi|} &= \frac{1}{2} (-q;q^2)_{\infty} \bigg( (-\sqrt{1-x} q;-q)_{\infty} +(\sqrt{1-x}q; -q)_{\infty} \bigg).
	\end{align*}
\end{proposition}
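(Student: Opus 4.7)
The plan is to prove each identity in two steps: first a combinatorial rewriting of $\widehat{n}_1$ in terms of an auxiliary strict-type partition, and then a closed-form computation of the resulting one-parameter $q$-series via Euler's identity.

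For the first identity, the key observation is that $\widehat{n}_1(\lala)=r(\la)$, the number of maximal runs of consecutive integers in the strict partition $\la$ associated with $\lala$. Indeed, above-diagonal corners of $\lala$ lie only in rows $i\le\ell$ (rows below the Durfee square do not extend past the diagonal), and row $i\le\ell$ ends in a corner iff $\lala_i>\lala_{i+1}$, equivalently $i=\ell$ or $\lambda_i-\lambda_{i+1}\ge 2$, which is exactly the condition for $\lambda_i$ to be the top of a run. Parametrizing $\la$ by the gap sequence $d_i=\lambda_i-\lambda_{i+1}$ (with $\lambda_{\ell+1}=0$, $d_i\ge 1$) and using $|\lala|=2|\la|$, direct geometric-series sums give
\[
\sum_{\lala\in\mathcal{DD}} x^{\widehat{n}_1(\lala)}q^{|\lala|}
=1+x\sum_{\ell\ge 1}\frac{Q^{\binom{\ell+1}{2}}\,((1-x)Q;Q)_{\ell-1}}{(Q;Q)_\ell},\qquad Q=q^2.
\]
To close the form I would establish the Euler-type lemma
\[
\sum_{\ell\ge 0}\frac{Q^{\binom{\ell+1}{2}}(y;Q)_\ell}{(Q;Q)_\ell}=(-Q;Q)_\infty\,(yQ;Q^2)_\infty
\]
by expanding $(y;Q)_\ell$ via the $q$-binomial theorem, interchanging the order of summation, and then applying Euler's identity $\sum_m Q^{\binom{m+1}{2}}z^m/(Q;Q)_m=(-zQ;Q)_\infty$ together with $(Q;Q)_k(-Q;Q)_k=(Q^2;Q^2)_k$. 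Specializing to $y=1-x$, $Q=q^2$ yields the claimed product $((1-x)q^2;q^4)_\infty(-q^2;q^2)_\infty$.

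For the second identity, I would work with the bijection $\pi\leftrightarrow(a_1>a_2>\cdots>a_d\ge 0)$ coming from the diagonal hook lengths $\mu_i=2a_i+1$, so $|\pi|=2|a|+d$. A row-by-row inspection of the corners of $\pi$ (using the automatic inequality $a_i\ge d-i\ge 1$ for $i<d$) gives
\[
\widehat{n}_1(\pi)=\sum_{i=1}^{d-1}[a_i-a_{i+1}\ge 2]+[a_d\ge 1].
\]
Setting $e_i=a_i-a_{i+1}$ ($i<d$), $e_d=a_d$ and repeating the geometric-series reduction yields
\[
\sum_{\pi\in\mathcal{SC}} x^{\widehat{n}_1(\pi)}q^{|\pi|}=\sum_{d\ge 0}\frac{q^{d^2}((1-x)q^2;q^2)_d}{(q^2;q^2)_d}.
\]
To match the target, I would expand $((1-x)q^2;q^2)_d$ by the $q$-binomial theorem, swap summations, and invoke Euler in the form $\sum_m q^{m^2}z^m/(q^2;q^2)_m=(-zq;q^2)_\infty$ to rewrite the right side as
\[
\sum_{k\ge 0}\frac{(-1)^k(1-x)^k q^{2k^2+k}(-q^{2k+1};q^2)_\infty}{(q^2;q^2)_k}.
\]
Independently, I would expand $(\pm\sqrt{1-x}\,q;-q)_\infty$ via Euler's identity with base $-q$; forming the sum $(-\sqrt{1-x}\,q;-q)_\infty+(\sqrt{1-x}\,q;-q)_\infty$ annihilates the odd powers of $\sqrt{1-x}$, and combined with the factorization $(-q;-q)_{2k}=(-q;q^2)_k(q^2;q^2)_k$ it converts the averaged product into exactly the series above.

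The principal obstacle is the final matching in the second identity: the two sides arise from $q$-series expansions against two different bases ($q^2$ on one side and $-q$ on the other), and aligning them requires both the symmetrization trick $(\pm\sqrt{1-x})$ to kill half the terms and the non-obvious factorization $(-q;-q)_{2k}=(-q;q^2)_k(q^2;q^2)_k$ so that the common normalization $(-q^{2k+1};q^2)_\infty/(q^2;q^2)_k$ emerges. The first identity is essentially routine once the Euler-type lemma is isolated, and the combinatorial interpretations of $\widehat{n}_1$ in both parts are straightforward consequences of the Durfee-square decomposition used throughout the paper.
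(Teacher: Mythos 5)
Your proposal is correct, and both of your combinatorial reductions land on exactly the same intermediate series as the paper's proof, namely $\sum_{n\ge 0} q^{n^2+n}\,\tfrac{((1-x);q^2)_n}{(q^2;q^2)_n}$ for $\mathcal{DD}$ and $\sum_{n\ge 0} q^{n^2}\,\tfrac{((1-x)q^2;q^2)_n}{(q^2;q^2)_n}$ for $\mathcal{SC}$ (your $1+x\sum_{\ell\ge1}Q^{\binom{\ell+1}{2}}((1-x)Q;Q)_{\ell-1}/(Q;Q)_\ell$ is the first of these after absorbing the factor $x=1-(1-x)$ into the Pochhammer symbol). The differences are in how you get there and how you finish. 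Combinatorially, the paper follows the $\mathcal{DD}_{(1)}/\mathcal{DD}_{(2)}$ Durfee-square split of Proposition~\ref{prop:gen_1} (rectangle plus a conjugate pair, cases according to the hook length of the box $(n,n+1)$), whereas you reparametrize by gap sequences of the strict partition $\la$ (equivalently, maximal runs --- note the $i$ with $\la_i-\la_{i+1}\ge 2$ or $i=\ell$ mark the \emph{smallest} element of each run, a harmless slip in your wording) and, for $\mathcal{SC}$, by differences of the principal hook parameters $a_i$; both parametrizations are valid and your verification that all above-diagonal corners sit in rows $i\le \ell$ (resp.\ $i\le d$) is sound. Analytically, the paper evaluates the first series via its identity \eqref{eq:G1}, which is proved by a limiting case of Heine's transformation \eqref{eqn:Heine} plus the $q$-binomial theorem, and simply cites \cite[Lemma 3.2 (1)]{AAOS} for the second; you instead give self-contained evaluations by expanding the finite Pochhammer symbol, interchanging summation, and applying Euler's identities, together with the parity symmetrization and the factorization $(-q;-q)_{2k}=(-q;q^2)_k(q^2;q^2)_k$. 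Your closing series $\sum_k (x-1)^k q^{2k^2+k}/\big((-q;q^2)_k(q^2;q^2)_k\big)$ is exactly the $t=1$ case of the expansion the paper records later in the proof of Theorem~\ref{thm:shift_normal}, which confirms your matching step. What your route buys is independence from the Heine transformation and from the external reference; what the paper's route buys is brevity, since \eqref{eq:G1} is already available from the proof of Proposition~\ref{prop:gen_1}.
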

\begin{proof}
As the proof proceeds similarly to the proof of Proposition \ref{prop:gen_1}, we omit the details here. We find that
\begin{align*}
	\sum_{\lambda\lambda\in \mathcal{DD}} x^{\widehat{n}_1(\lala)} q^{|\lambda\lambda|}
	&= 1+\sum_{n\ge 1} xq^{n^2+n} \prod_{j=1}^{n-1}\frac{1-(1-x) q^{2j}}{1-q^{2j}} +\sum_{n\ge 1} \frac{x q^{n^2+3n} }{1-q^{2n}}\prod_{j=1}^{n-1} \frac{1-(1-x)q^{2j}}{1-q^{2j}}\\
	&=  \sum_{n\ge 0 } q^{n^2+n} \frac{( (1-x);q^2)_{n}}{(q^2;q^2)_n}\\
	&=((1-x)q^2;q^4)_{\infty} (-q^2;q^2)_{\infty},
\end{align*}
where \eqref{eq:G1}  is used for the last equality.
We also obtain that
\begin{align*}
	\sum_{\pi \in \mathcal{SC}} x^{\widehat{n}_1(\pi)} q^{|\pi|}
	&= 1+\sum_{n\ge 1} q^{n^2} \prod_{j=1}^{n-1}\frac{1-(1-x) q^{2j})}{1-q^{2j}} +\sum_{n\ge 1} \frac{x q^{n^2+2n} }{1-q^{2n}} 	\prod_{j=1}^{n-1} \frac{1-(1-x)q^{2j}}{1-q^{2j}}\\
	&=  \sum_{n\ge 0 } q^{n^2} \frac{( (1-x)q^2;q^2)_{n}}{(q^2;q^2)_n}\\
	&= \frac{1}{2} (-q;q^2)_{\infty} \bigg( (-\sqrt{1-x} q;-q)_{\infty} +(\sqrt{1-x}q; -q)_{\infty} \bigg),
\end{align*}
where we use \cite[Lemma 3.2 (1)]{AAOS} for the last equality.
\end{proof}

We are now ready to prove Theorem~\ref{thm:shift_gen}.

\begin{proof}[Proof of Theorem~\ref{thm:shift_gen}]
If $t$ is odd, by Theorem~\ref{thm:ltwd2}, Proposition \ref{prop:1-hook}, Lemma \ref{lem:t-core_gen}, and \eqref{eqn:Han_gen}, we find that
\begin{align*}
	\sum_{\lambda\lambda \in \mathcal{DD}} x^{\widehat{n}_t(\lala)} q^{|\lambda \lambda|} &= \sum_{\omega \in \mathcal{DD}_t} q^{|\omega|} \left( \sum_{\nu \in \mathcal{P}} x^{ n_1(\nu)} q^{2t |\nu|}\right)^{\frac{t-1}{2}} \sum_{\mu\mu \in \mathcal{DD}} x^{\widehat{n}_1(\mu\mu)} q^{t|\mu\mu|}\\
	&= \frac{(-q^2;q^2)_\infty (q^{2t};q^{2t})_\infty^{\frac{t-1}{2}}}{(-q^{2t};q^{2t})_\infty} \frac{\left( (1-x )q^{2t}; q^{2t}\right)_\infty^{\frac{t-1}{2}}} {(q^{2t};q^{2t})_\infty^{\frac{t-1}{2}}} \sum_{\mu\mu \in \mathcal{DD}} x^{\widehat{n}_1(\mu\mu)} q^{t|\mu\mu|}\\
	&= (-q^2;q^2)_\infty  \left( (1-x)q^{2t}; q^{2t}\right)_\infty^{\frac{t-1}{2}}  \big((1-x)q^{2t};q^{4t}\big)_{\infty}.
\end{align*}
Similarly, if $t$ is even,
\begin{align*}
	& \sum_{\lambda\lambda \in \mathcal{DD}} x^{\widehat{n}_t(\lala)} q^{|\lambda\lambda|} \\
	&= \sum_{\omega \in \mathcal{DD}_t} q^{|\omega|} \left( \sum_{\nu \in \mathcal{P}} x^{ n_1(\nu)} q^{2t |\nu|}\right)^{\frac{t-2}{2}} \sum_{\mu\mu \in \mathcal{DD}} x^{\widehat{n}_1(\mu\mu)} q^{t|\mu \mu|}  \sum_{\pi \in \mathcal{SC}} x^{\widehat{n}_1(\pi)} q^{t|\pi|} \\
	&= \frac{(-q^2;q^2)_\infty (q^{2t};q^{2t})_\infty^{\frac{t-2}{2}}}{(-q^{t};q^{t})_\infty} \frac{\left((1-x)q^{2t}; q^{2t}\right)_\infty^{\frac{t-2}{2}}} {(q^{2t};q^{2t})_\infty^{\frac{t-2}{2}}} \sum_{\mu\mu \in \mathcal{DD}} x^{\widehat{n}_1(\mu\mu)} q^{t |\mu\mu|}  \sum_{\pi \in \mathcal{SC}} x^{\widehat{n}_1(\pi)} q^{t|\pi|} \\
	&= \frac12 (-q^2;q^2)_\infty  \left(  (1-x)q^{2t}; q^{2t}\right)_\infty^{\frac{t-2}{2}}   \big((1-x)q^{2t};q^{4t}\big)_{\infty}  \bigg( (-\sqrt{1-x} q^t;-q^t)_{\infty} +(\sqrt{1-x} q^t; -q^t)_{\infty} \bigg). \qedhere
\end{align*}
\end{proof}

\section{Asymptotics of the generating functions} \label{sec:asym_gen}

Let $z\in\mathbb{C}$ with $\Re(z)>0$ and let $h,k\in\mathbb{N}$ with $0\leq h< k$ and $\gcd(h,k)=1$.
In Sections \ref{sec:asym_gen} and \ref{sec:circle}, we let $q=e^{\frac{2\pi i}{k}(h+iz)}$. We first examine the behavior of our $q$-product near the root of unity. Let $\zeta_k$ be a $k$-th root of unity.

\begin{lemma}\label{lem:asym}
	Suppose that $0 \leq \theta < \frac{\pi}{2}$,  $|X|<1$, and let $\gcd(t,k)=d$. Then as $z \to 0$ in $D_\theta$, which is defined in Lemma \ref{lem:E_M_sum},
	\begin{align*}
		\Log (Xq^t;q^t)_\infty  &= -\frac{d^2 \Li_2\left(X^\frac{k}d\right)}{2\pi t k z} + \sum_{\ell =1}^{k}  \left(\frac12-\frac{\ell}{k} \right) \Log \left(1- X \zeta_k^{ht\ell} \right) + O(kz).
	\end{align*}
	If $\frac{k}{d}$ is odd, as $z \to 0$ in $D_\theta$,
	\begin{align*}
		\Log (Xq^t;q^{2t})_\infty =& -\frac{d^2 \Li_2\left(X^{\frac{k}d}\right)}{4\pi t k z} +\sum_{\ell =1}^{k} \left(\frac12-\frac{2\ell-1}{2k} \right) \Log \left(1- X \zeta_k^{ht(2\ell-1)}\right) + O(kz),\\
		\Log (X;-q^{t})_\infty  =&  -\frac{d^2 \Li_2\left(X^\frac{2k}d\right)}{8\pi t k z} + \Log(1-X) + \sum_{\ell =1}^{2k}  \left(\frac12-\frac{\ell}{2k} \right) \Log \left(1- (-1)^\ell X \zeta_k^{ht\ell} \right) + O(kz).
	\end{align*}
	If $\frac{k}{d}$ is even, as $z \to 0$ in $D_\theta$,
	\begin{align*}
		\Log (Xq^t;q^{2t})_\infty  =& -\frac{d^2\left[\Li_2\left(X^{\frac{k}d}\right)-2 \Li_2\left(X^{\frac{k}{2d}}\right)\right]}{2\pi t k z}\\
		&\qquad +\sum_{\ell =1}^{k} \left(\frac12-\frac{2\ell-1}{2k} \right) \Log \left(1- X \zeta_k^{ht(2\ell-1)}\right) + O(kz),\\
		\Log (X;-q^{t})_\infty  =&  -\frac{d^2\left[3\Li_2\left(X^{\frac{k}d}\right)-4 \Li_2\left((-X)^{\frac{k}{2d}}\right)\right]}{4\pi t k z} + \Log(1-X) \\
		&\qquad+ \sum_{\ell =1}^{2k}  \left(\frac12-\frac{\ell}{2k} \right) \Log \left(1- (-1)^\ell X \zeta_k^{ht\ell} \right)+ O(kz).
	\end{align*}
\end{lemma}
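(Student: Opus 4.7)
The plan is to apply Euler--Maclaurin (Lemma \ref{lem:E_M_sum}) to the logarithmic series for each Pochhammer symbol, grouping terms by residue classes that respect the periodic angular part of $q^{tn}$ near the root of unity. Throughout set $m := k/d$.

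For $\Log (Xq^t;q^t)_\infty$, I expand as $\sum_{n\ge 1}\Log(1 - Xq^{tn})$, write $q^{tn} = \zeta_k^{thn}e^{-2\pi tnz/k}$, and split $n = r + mj$ with $r \in \{1,\dots,m\}$ and $j \ge 0$. The periodicity $m$ is exactly the order of $\zeta_k^{th}$, since $\gcd(th,k) = \gcd(t,k) = d$ when $\gcd(h,k) = 1$. With $w := 2\pi tz/d$ and $a_r := rd/k$, the inner sum over $j$ fits the Euler--Maclaurin form of Lemma \ref{lem:E_M_sum} applied to $f_r(y) := \Log(1 - X\zeta_k^{thr}e^{-y})$. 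Its leading coefficient comes from $\int_0^\infty \Log(1 - X\zeta_k^{thr}e^{-y})\,dy = -\Li_2(X\zeta_k^{thr})$, obtained via $\Log(1-u) = -\sum_{n\ge 1} u^n/n$; summing over $r$ and using the identity $\sum_{\omega^m=1}\Li_2(X\omega) = \Li_2(X^m)/m$ (valid because $\zeta_k^{th}$ is a primitive $m$-th root of unity) produces the advertised term $-d^2\Li_2(X^{k/d})/(2\pi tkz)$. The $w^0$ contribution uses $B_1(a_r) = rd/k - 1/2$, giving $\sum_{r=1}^m (1/2 - rd/k)\Log(1 - X\zeta_k^{thr})$; a multiplicity check shows this equals the lemma's sum over $\ell \in \{1,\dots,k\}$, since within each residue class $\ell \equiv r \pmod m$ there are $d$ values of $\ell$ contributing identical log-factors with coefficients summing to $1/2 - rd/k$. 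Taking $N = 1$ in Lemma \ref{lem:E_M_sum} bounds the error by $O(w)$ per residue, hence $O(kz)$ overall.

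The treatment of $\Log (Xq^t;q^{2t})_\infty = \sum_{n\ge 0}\Log(1 - Xq^{t(2n+1)})$ uses the same Euler--Maclaurin machine but with the summation index restricted to odd integers. When $m$ is odd, $2$ is invertible modulo $m$, so odd integers cycle through every residue and the analysis proceeds as above with $a_r = (2r-1)/(2m)$ and $w = 4\pi tz/d$. When $m$ is even, odd integers reach only odd residues modulo $m$; the sum $\sum_{i=0}^{m/2-1}\Li_2(X\zeta_k^{th(2i+1)})$ is then evaluated as the full $m$-th roots sum minus the even-residue subsum (the latter supported on the primitive $(m/2)$-th roots generated by $\zeta_k^{2th}$), producing the combination $\Li_2(X^{k/d}) - 2\Li_2(X^{k/(2d)})$ appearing in the claim. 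Finally, $\Log (X;-q^t)_\infty$ is handled via the elementary factorization $(X;-q^t)_\infty = (1-X)(Xq^{2t};q^{2t})_\infty(-Xq^t;q^{2t})_\infty$ obtained by separating even and odd $n$; applying the first formula with $t \to 2t$ (noting $\gcd(2t,k)$ equals $d$ or $2d$ according as $m$ is odd or even) and the second formula with $X \to -X$, then combining via the duplication identity \eqref{eq:Li2_dup}, yields both stated expansions.

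The main obstacle is the $m$-even branch of $\Log (X;-q^t)_\infty$: the base change $t \to 2t$ in the factor $(Xq^{2t};q^{2t})_\infty$ shifts $d$ to $2d$, while the factor $(-Xq^t;q^{2t})_\infty$ simultaneously requires the $m$-even branch of the second formula. The two resulting singular dilog contributions must telescope, via careful application of \eqref{eq:Li2_dup} to both $X$ and $-X$, into the asymmetric combination $3\Li_2(X^{k/d}) - 4\Li_2((-X)^{k/(2d)})$, and the corresponding logarithmic sums over roots of unity must be merged consistently to produce the alternating-sign sum stated in the claim.
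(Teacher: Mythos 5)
Your treatment of the first two expansions is sound and is essentially the paper's argument: the paper splits the index into residue classes modulo $k$ and then collapses the dilogarithm sum with $\sum_{\ell=1}^{k}\Li_s\bigl(\zeta_{k/d}^{\ell}X\bigr)=\frac{d^s}{k^{s-1}}\Li_s\bigl(X^{k/d}\bigr)$, whereas you split modulo $m=k/d$ from the start; your multiplicity check correctly reconciles the two forms of the finite logarithmic sum, so the difference is cosmetic. The factorization $(X;-q^t)_\infty=(1-X)(Xq^{2t};q^{2t})_\infty(-Xq^t;q^{2t})_\infty$ together with \eqref{eq:Li2_dup} is also exactly what the paper does for the third expansion.

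The genuine gap is precisely at the step you flag as the main obstacle and then dispose of by assertion. Carry out the computation you describe for $\tfrac{k}{d}$ even: the factor $(Xq^{2t};q^{2t})_\infty$ falls under the first formula with $t\mapsto 2t$ and $\gcd(2t,k)=2d$, contributing $-\frac{(2d)^2\Li_2(X^{k/(2d)})}{4\pi tkz}$ to the singular part, while $(-Xq^t;q^{2t})_\infty$ contributes $-\frac{d^2\bigl[\Li_2(X^{k/d})-2\Li_2\bigl((-X)^{k/(2d)}\bigr)\bigr]}{2\pi tkz}$. The total bracket is therefore $4\Li_2\bigl(X^{k/(2d)}\bigr)+2\Li_2\bigl(X^{k/d}\bigr)-4\Li_2\bigl((-X)^{k/(2d)}\bigr)$, and no application of \eqref{eq:Li2_dup} turns this into the claimed $3\Li_2\bigl(X^{k/d}\bigr)-4\Li_2\bigl((-X)^{k/(2d)}\bigr)$: the two differ by $4\Li_2\bigl(X^{k/(2d)}\bigr)-\Li_2\bigl(X^{k/d}\bigr)=2\Li_2\bigl(X^{k/(2d)}\bigr)-2\Li_2\bigl(-X^{k/(2d)}\bigr)\neq 0$. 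A direct check at $t=1$, $k=2$, $h=1$ (so $-q^t=e^{-\pi z}$) gives the true singular part $-\Li_2(X)/(\pi z)$, which agrees with your computation's bracket but not with the statement's $-\bigl(3\Li_2(X^2)-4\Li_2(-X)\bigr)/(8\pi z)=-\bigl(6\Li_2(X)+2\Li_2(-X)\bigr)/(8\pi z)$. So the telescoping you assert does not occur; the stated bracket is what one obtains by erroneously keeping $d$ instead of $2d$ for the $(Xq^{2t};q^{2t})_\infty$ factor, which indicates the discrepancy lies in the statement itself rather than in your method — but as written your proof of that displayed formula does not close, and you should either correct the target expression or exhibit the (nonexistent) identity you are relying on.
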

\begin{proof}
By Lemma \ref{lem:E_M_sum} with $N=1$ and $B_1 (a) = a - \frac{1}{2}$, we have
\begin{align*}
	\Log (Xq^t;q^t)_\infty  
	&= \sum_{\ell =1}^{k} \sum_{n \geq 0} \Log \left(1- X \zeta_k^{ht\ell} e^{-\frac{2\pi t z}{k}\left(kn +\ell \right)}\right)\\
	&= \sum_{\ell =1}^{k} \sum_{n \geq 0} f_{X \zeta_k^{ht\ell}}\left( \left(n+ \frac{\ell}k \right) z \right)\\
	&= \sum_{\ell =1}^{k} \left(  - \frac{\Li_2\left(X \zeta_k^{ht\ell}\right)}{2 \pi t z} + \left(\frac12-\frac{\ell}{k} \right) \Log \left(1- X \zeta_k^{ht\ell} \right) + O(z) \right),
\end{align*}
where 
\[
	f_a(x) := \Log \left(1- a e^{-2\pi t x} \right) = -\sum_{n \geq 0} \frac{\Li_{1-n}(a)}{n!}(-2\pi t x)^n= \Log \left(1- a \right) + O(x).
\]
We remark that since $a=X\zeta_k^{ht\ell}$, we require the condition $|X|<1$ to get $O(z)$. We also note that
\[
	\int_{0}^\infty f_a(x) dx = - \frac{\Li_2(a)}{2\pi t}.
\]
Since $\gcd(t,k)=d$, we may replace $\zeta_{k}^{h t\ell}$ by $\zeta^\ell_{\frac{k}d}$ in the summation of $\ell$. Thus,
\begin{equation}\label{eq:sum_Li}
	\sum_{\ell=1}^{k} \Li_s \left( \zeta_{\frac{k}d}^{\ell} X\right) = \sum_{\ell =1}^{k} \sum_{n=1}^{\infty} \frac{ \zeta_{\frac{k}d}^{ \ell n} X^n}{n^s} 
	= \sum_{n=1}^{\infty} \frac{ k X^{\frac{kn}d}}{\left(\frac{kn}d\right)^s} = \frac{d^s}{k^{s-1}} \Li_s \left(X^{\frac{k}d}\right).
\end{equation}
Hence, we obtain
\begin{align*}
	\Log (Xq^t;q^t)_\infty &= -\frac{d^2 \Li_2\left(X^\frac{k}d\right)}{2\pi t k z} + \sum_{\ell =1}^{k}  \left(\frac12-\frac{\ell}{k} \right) \Log \left(1- X \zeta_k^{ht\ell} \right) + O(kz).
\end{align*}

Similarly,
\begin{align*}
	\Log (Xq^t;q^{2t})_\infty 
	&= \sum_{\ell =1}^{k} \sum_{n \geq 0} \Log \left(1- X \zeta_k^{ht(2\ell-1)} e^{-\frac{2\pi t z}{k}\left(2kn +2\ell -1 \right)}\right)\\
	&= \sum_{\ell =1}^{k} \sum_{n \geq 0} f_{X \zeta_k^{ht(2\ell-1)}}\left( \left(n+ \frac{2\ell-1}{2k} \right) 2 z \right)\\
	&= \sum_{\ell =1}^{k} \left(  - \frac{\Li_2\left(X \zeta_k^{ht(2\ell-1)}\right)}{4 \pi t z} + \left(\frac12-\frac{2\ell-1}{2k} \right) \Log \left(1- X \zeta_k^{ht(2\ell-1)} \right) + O(z) \right).
\end{align*}
If $\frac{k}{d}$ is odd, using \eqref{eq:sum_Li}, we find that
\begin{align*}
	\Log (Xq^t;q^{2t})_\infty &= -\frac{d^2 \Li_2\left(X^{\frac{k}d}\right)}{4\pi t k z} +\sum_{\ell =1}^{k} \left(\frac12-\frac{2\ell-1}{2k} \right) \Log \left(1- X \zeta_k^{ht(2\ell-1)}\right) + O(kz).
\end{align*}
If $\frac{k}{d}$ is even,
\begin{align*}
	\sum_{\ell =1}^{k} \Li_s\left(X \zeta_{\frac{k}d}^{(2\ell-1)}\right) &= \sum_{n \geq 1} \frac{ X^n }{n^s}\sum_{\ell =1}^{k} \zeta_{\frac{k}d}^{(2\ell-1)n}\\
	&= \sum_{n \geq 1} \frac{ X^n }{n^s} \left( \sum_{\ell =1}^{2k} \zeta_{\frac{k}d}^{\ell n}-\sum_{\ell =1}^{k} \zeta_{\frac{k}{2d}}^{\ell n}  \right) = \frac{2 d^s}{k^{s-1}}\Li_s \left( X^{\frac{k}d}\right) - \frac{(2 d)^s}{k^{s-1}}\Li_s \left( X^{\frac{k}{2d}}\right),
\end{align*}
from which we arrive at
\begin{multline*}
	\Log (Xq^t;q^{2t})_\infty  \\
	= -\frac{d^2\left[\Li_2\left(X^{\frac{k}d}\right)-2 \Li_2\left(X^{\frac{k}{2d}}\right)\right]}{2\pi t k z} +\sum_{\ell =1}^{k} \left(\frac12-\frac{2\ell-1}{2k} \right) \Log \left(1- X \zeta_k^{ht(2\ell-1)}\right) + O(kz).
\end{multline*}

Lastly, using \eqref{eq:Li2_dup} and
\begin{align*}
	\Log (X;-q^{t})_\infty = \Log(1-X) + \Log (Xq^{2t};q^{2t})_\infty + \Log (-Xq^t;q^{2t})_\infty,
\end{align*}
we derive the desired asymptotic formulas.
\end{proof}

\subsection{Asymptotic behavior of \texorpdfstring{$F_t (x;q)$}{} for \texorpdfstring{$t$}{} odd}\label{sec:F_t_odd}

We first consider the behavior of $F_t(x;q)$ for $t$ odd. Since $dd_t(2n+1;x)=0$ for all $n \geq 0$, by Theorem~\ref{thm:gen_F_t}, we see that
\[
	\sum_{n \geq 0} dd_t(2n;x)q^n =F_t(x;q^{\frac12}) =(-q;q)_\infty \left( (1-x^2)q^{t}; q^{t}\right)_\infty^{\frac{t-1}{2}} D^*(x;q^{\frac{t}2}).
\]

We set $F_t(x;q^{\frac12})=A_0(x;q)+A_1(x;q)$, where
\begin{align}
	A_0(x;q)&:=\frac{x}{1+x}(-q;q)_{\infty} \left( (1-x^2) q^{t}; q^{t}\right)_\infty^{\frac{t-1}{2}}  ( (1-x^2) q^{2t} ;q^{2t})_{\infty}, \nonumber \\
	A_1(x;q)&:=\frac{1}{1+x}(-q;q)_{\infty} \left( (1-x^2) q^{t}; q^{t}\right)_\infty^{\frac{t-1}{2}}  ( (1-x^2) q^{t} ;q^{2t})_{\infty}. \label{eqn:A1defn}
\end{align}
We find the asymptotics of $A_0(x;q)$ and $A_1(x;q)$ separately to deduce the asymptotic of $F_t(x;q)$. Before stating the result, we introduce a transformation formula for $(q;q)_{\infty}$ \cite[Sec. 5.2]{A}.   For  $q_1:=e^{\frac{2\pi i}k(h'+\frac {i}{z})}$ with $hh'\equiv-1 \pmod{k}$, 
\begin{equation}\label{eq:eta_trans}
	(q_1 ;q_1)_{\infty} = \omega_{h,k}\sqrt ze^{\frac{\pi}{12k}\left(\frac1z-z\right)} (q;q)_\infty,
\end{equation}
where the multiplier $\omega_{h,k}$ is defined by
\[
	\omega_{h,k}:=e^{\pi i s(h,k)} \quad\text{with}\quad
	s(h,k):=\sum_{\mu=1}^{k-1}\left(\!\!\left(\frac{\mu}{k}\right)\!\!\right)\left(\!\!\!\left(\frac{h\mu}{k}\right)\!\!\!\right).
\]
Here, $\left(\!\left(x\right)\!\right)$ is $x-\lfloor x \rfloor - \frac{1}{2}$ if $x$ is not an integer, and is $0$ otherwise.

\begin{lemma}\label{lem:asymp_gen_odd}
	Suppose $0<x<\sqrt{2}$ and let $d=\gcd(t,k)$. For $j=0$ and $1$, we have the following. 

	\begin{enumerate}
		\item If $k$ is odd, as $z \to 0$ in $D_\theta$,
		\[
			A_j(x;q) = \alpha_j(x,h,k) \exp\left( \frac{\Li_2(1)-d^2\Li_2\left((1-x^2)^{\frac{k}d}\right)}{4\pi kz} \right)\left( 1+ O(kz) \right),
		\]
		where
		\begin{align*}
			\alpha_j(x,h,k)&:= \frac{\omega_{h,k}}{\sqrt2 \omega_{2h,k}} \frac{x^{1-j}}{1+x} \prod_{\ell =1}^{k} \left(1- (1-x^2) \zeta_k^{ht\ell} \right)^{\frac{t-1}{2} \left(\frac12-\frac{\ell}{k} \right)} \left(1- (1-x^2) \zeta_k^{ht(2\ell-j)} \right)^{\left(\frac12-\frac{2\ell-j}{2k} \right)}.
		\end{align*}
		\item If $k$ is even, as $z \to 0$ in $D_\theta$,
		\begin{align*}
			A_0(x;q) &\ll \exp\left(-\frac{\pi}{12kz}-\frac{d^2 \Li_2\left((1-x^2)^\frac{k}d\right)}{4\pi k z}  \right), \\
			A_1(x;q) &\ll \exp\left(-\frac{\pi}{12kz}-\frac{d^2 \Li_2\left((1-x^2)^\frac{k}d\right)}{4\pi k z} - \delta_{2|\frac{k}{d}}\frac{d^2\left[\Li_2\left((1-x^2)^{\frac{k}d}\right)-4 \Li_2\left((1-x^2)^{\frac{k}{2d}}\right)\right]}{4\pi t k z} \right).
		\end{align*}
	\end{enumerate}
\end{lemma}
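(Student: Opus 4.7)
The plan is to logarithmically decompose each $A_j(x;q)$ into four pieces: the scalar $x^{1-j}/(1+x)$, the eta-type factor $(-q;q)_\infty$, the $(t-1)/2$-th power $((1-x^2)q^t;q^t)_\infty^{(t-1)/2}$, and the ``mixed'' product $((1-x^2)q^{t(2-j)};q^{2t})_\infty$. Asymptotics for the last two are read off directly from Lemma~\ref{lem:asym}, while $(-q;q)_\infty$ is handled via the identity $(-q;q)_\infty=(q^2;q^2)_\infty/(q;q)_\infty$ together with the modular transformation \eqref{eq:eta_trans} applied to each Dedekind-eta factor. When $k$ is odd, $\gcd(2,k)=1$, so $q^2$ can still be expressed with denominator $k$ (but with dual parameter $2z$ and multiplier $\omega_{2h,k}$), and the polar pieces $e^{-\pi(1/z-z)/(12k)}$ combine to produce $e^{\pi/(24kz)+O(z)}=e^{\Li_2(1)/(4\pi kz)+O(z)}$, supplying both the $\Li_2(1)$ term in (1) and the multiplier $\omega_{h,k}/(\sqrt{2}\,\omega_{2h,k})$ inside $\alpha_j$. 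When $k$ is even, $\gcd(h,k/2)=1$ allows us to view $q^2$ as having denominator $k/2$, and after the transformation the eta factors combine to the decaying $e^{-\pi/(12kz)+O(z)}$ featured in (2).

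Next I would apply Lemma~\ref{lem:asym} to each $q$-product with $X=1-x^2$ (so $|X|<1$ since $0<x<\sqrt{2}$). The $q^t$-product is covered by the first formula of Lemma~\ref{lem:asym} and contributes a polar piece $-\frac{(t-1)d^2 L}{4\pi tkz}$ with $L:=\Li_2((1-x^2)^{k/d})$. For the mixed product there is a case split: for $j=0$ it has shape $(Xq^{2t};q^{2t})_\infty$ and the first formula applies with $t$ and $d$ replaced by $2t$ and $d':=\gcd(2t,k)$ (which equals $d$ for $k$ odd and $2d$ for $k$ even, since $d$ is odd whenever $t$ is odd); for $j=1$ it has shape $(Xq^t;q^{2t})_\infty$ and we invoke the second or third formula of Lemma~\ref{lem:asym} according to whether $k/d$ is odd or even. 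In the $k$-odd case the polar pieces telescope, since $(t-1)L + L = tL$, to $-\frac{d^2L}{4\pi kz}$, which combined with $\Li_2(1)/(4\pi kz)$ from $(-q;q)_\infty$ produces the exponent in (1); the constant $\alpha_j(x,h,k)$ is then the product of the eta multiplier, the scalar $x^{1-j}/(1+x)$, and the finite ``boundary'' products of the form $\prod_{\ell=1}^k(1-(1-x^2)\zeta_k^{ht\ell})^{c_\ell}$ with Bernoulli-polynomial exponents $c_\ell$ coming from Lemma~\ref{lem:asym}. The individual $O(kz)$ errors exponentiate to the stated multiplicative $1+O(kz)$.

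For part (2) the same computation gives an exact asymptotic, except that $k/d$ is now even (since $d$ is odd and $k$ is even), so the third formula of Lemma~\ref{lem:asym} supplies an extra polar term $\Li_2((1-x^2)^{k/(2d)})$ in $A_1$, and the substitution $d'=2d$ produces an analogous extra term in $A_0$. To obtain the stated $\ll$-bounds, I take absolute values: $\Re(1/z)>0$ inside $D_\theta$, the boundary products are bounded uniformly as $z\to 0$, and the duplication identity \eqref{eq:Li2_dup} shows that any contribution omitted from the $A_0$ exponent only makes the real part of the true exponent more negative. The main technical obstacle is the combinatorial bookkeeping: one must simultaneously track (i) the change $d\mapsto\gcd(2t,k)$ when $q^t$ becomes $q^{2t}$, (ii) the parity of $k/d$, which switches between the second and third formulas of Lemma~\ref{lem:asym}, and (iii) the eta multiplier, whose form differs between the $k$-odd and $k$-even regimes. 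Once these splits are handled cleanly, the telescoping of the polar parts and the identification of the boundary products with $\alpha_j(x,h,k)$ are straightforward.
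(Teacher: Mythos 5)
Your proposal is correct and follows essentially the same route as the paper: decompose $A_j$ into the scalar, the factor $(-q;q)_\infty=(q^2;q^2)_\infty/(q;q)_\infty$ handled by the modular transformation \eqref{eq:eta_trans} (yielding \eqref{eq:P_asym} for $k$ odd and the decaying bound \eqref{eq:P_bound} for $k$ even), and the two $q$-products handled by Lemma~\ref{lem:asym}, then combine the polar parts via $(t-1)L+L=tL$ and identify the boundary products with $\alpha_j$. Your explicit tracking of $\gcd(2t,k)=2d$ for the $(Xq^{2t};q^{2t})_\infty$ factor when $k$ is even, and the observation that the resulting extra polar term only decreases the real part of the exponent, is if anything slightly more careful than the paper's writeup of part (2).
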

\begin{proof}
Let $X=1-x^2$. Then $|X|<1$. First, we consider $A_0(x;q)$.
If $k$ is odd, by Lemma \ref{lem:asym}, we have
\begin{multline*}
	\frac{t-1}{2} \Log (Xq^{t} ; q^{t})_{\infty} + \Log( X q^{2t} ;q^{2t})_{\infty} = -\frac{d^2 \Li_2\left(X^\frac{k}d\right)}{4\pi k z} + \frac{t-1}{2} \sum_{\ell =1}^{k}  \left(\frac12-\frac{\ell}{k} \right) \Log \left(1- X \zeta_k^{ht\ell} \right)\\
	+ \sum_{\ell =1}^{k}  \left(\frac12-\frac{\ell}{k} \right) \Log \left(1- X \zeta_k^{2ht\ell} \right) + O(kz).
\end{multline*}
If $k$ is odd, by \eqref{eq:eta_trans},
\begin{equation}\label{eq:P_asym}
	(-q;q)_\infty = \frac{(q^2;q^2)_\infty}{(q;q)_\infty} = \frac{\omega_{h,k}}{\sqrt2 \omega_{2h,k}}e^{\frac{\pi}{24kz}+\frac{\pi z}{12k}}+O\left(e^{-\frac{23\pi}{24kz}}\right).
\end{equation}
Thus, we obtain 
\begin{align*}
	&A_0(x;q)\\
	&= \frac{x}{1+x}\left[ \frac{\omega_{h,k}}{\sqrt2 \omega_{2h,k}}e^{\frac{\pi}{24kz}+\frac{\pi z}{12k}}+O\left(e^{-\frac{23\pi}{24kz}}\right) \right] \left( 1+ O(kz) \right)\\
	&\quad \times \exp \left[ -\frac{d^2 \Li_2\left(X^\frac{k}d\right)}{4\pi k z} + \frac{t-1}{2} \sum_{\ell =1}^{k}  \left(\frac12-\frac{\ell}{k} \right) \Log \left(1- X \zeta_k^{ht\ell} \right) + \sum_{\ell =1}^{k}  \left(\frac12-\frac{\ell}{k} \right) \Log \left(1- X \zeta_k^{2ht\ell} \right) \right] \\
	&= \alpha_0(x,h,k) \exp\left( \frac{\Li_2(1)-d^2\Li_2\left(X^{\frac{k}d}\right)}{4\pi kz} \right)\left( 1+ O(kz) \right).
\end{align*}

Since
\begin{equation}\label{eq:prod_bound1}
	\prod_{\ell =1}^{k} \left|1- X\zeta_k^{ht\ell} \right|^{\left(\frac12-\frac{\ell}{k} \right)} = |1-X|^{-\frac12}\prod_{\ell =1}^{\lfloor \frac{k-1}2 \rfloor} \left|\frac{1- X\zeta_k^{ht\ell}}{1-X\zeta_k^{-ht\ell}} \right|^{\left(\frac12-\frac{\ell}{k} \right)} = |1-X|^{-\frac12},
\end{equation}
we find the magnitude of $\alpha_0 (x,h,k)$ as
\begin{equation}\label{eq:alpha0}
	\left|\alpha_0(x,h,k)\right| = \frac{|x|}{\sqrt{2}|1+x|} \prod_{\ell =1}^{k} \left|1- (1-x^2) \zeta_k^{ht\ell} \right|^{\frac{t-1}{2} \left(\frac12-\frac{\ell}{k} \right)} \left|1- (1-x^2) \zeta_k^{2ht\ell} \right|^{\left(\frac12-\frac{\ell}{k} \right)} = \frac{1}{\sqrt{2} x^{\frac{t-1}2}(1+x)}.
\end{equation}

If $k$ is even, by \eqref{eq:eta_trans},
\begin{equation}\label{eq:P_bound}
	(-q;q)_\infty = \frac{\omega_{h,k}}{\omega_{h,\frac{k}{2}} } e^{-\frac{\pi}{12k} \left( \frac{1}{z} - z \right)} (-q_1;q_1)_{\infty} \ll e^{-\frac{\pi}{12kz}}.
\end{equation}
Thus, by Lemma \ref{lem:asym} with \eqref{eq:alpha0}, we get the bound of $A_0(x;q)$
\begin{align*}
	A_0(x;q) \ll \exp\left(-\frac{\pi}{12kz}-\frac{d^2 \Li_2\left(X^\frac{k}d\right)}{4\pi k z} \right).
\end{align*}

Next, we derive the asymptotic for $A_1(x;q)$ similarly.
If $k$ is odd, by Lemma \ref{lem:asym}, we find 
\begin{multline*}
	\frac{t-1}{2} \Log (Xq^{t} ; q^{t})_{\infty} + \Log( X q^{t} ;q^{2t})_{\infty} = -\frac{d^2 \Li_2\left(X^\frac{k}d\right)}{4\pi k z} + \frac{t-1}{2} \sum_{\ell =1}^{k}  \left(\frac12-\frac{\ell}{k} \right) \Log \left(1- X \zeta_k^{ht\ell} \right)\\
	 +\sum_{\ell =1}^{k} \left(\frac12-\frac{2\ell-1}{2k} \right) \Log \left(1- X \zeta_k^{ht(2\ell-1)}\right)+ O(kz).
\end{multline*}
Thus, if $k$ is odd, we arrive at
\begin{align*}
	A_1(x;q)&= \frac{1}{1+x}\left[ \frac{\omega_{h,k}}{\sqrt2 \omega_{2h,k}}e^{\frac{\pi}{24kz}+\frac{\pi z}{12k}}+O\left(e^{-\frac{23\pi}{24kz}}\right) \right] \left( 1+ O(kz) \right)\\
	&\quad \times \exp \left[ -\frac{d^2 \Li_2\left(X^\frac{k}d\right)}{4\pi k z} + \frac{t-1}{2} \sum_{\ell =1}^{k}  \left(\frac12-\frac{\ell}{k} \right) \Log \left(1- X \zeta_k^{ht\ell} \right) \right.\\
	&\hskip 200pt \left.+\sum_{\ell =1}^{k} \left(\frac12-\frac{2\ell-1}{2k} \right) \Log \left(1- X \zeta_k^{ht(2\ell-1)}\right) \right] \\
	&= \alpha_1(x,h,k) \exp\left( \frac{\Li_2(1)-d^2\Li_2\left(X^{\frac{k}d}\right)}{4\pi kz} \right)\left( 1+ O(kz) \right).
\end{align*}

From \eqref{eq:prod_bound1} and the following identity
\begin{equation}\label{eq:prod_bound2}
	\prod_{\ell =1}^{k}\left|1- X \zeta_k^{ht(2\ell-1)}\right|^{\left(\frac12-\frac{2\ell-1}{2k} \right)} = \prod_{\ell =1}^{\frac{k-1}{2}}\left|\frac{1- X \zeta_k^{ht(2\ell-1)}}{1- X \zeta_k^{-ht(2\ell-1)}}\right|^{\left(\frac12-\frac{2\ell-1}{2k} \right)} = 1,
\end{equation}
we have that
\begin{equation}\label{eq:alpha1}
	\left|\alpha_1(x,h,k)\right| = \frac{1}{\sqrt{2} x^{\frac{t-1}2}(1+x)}.
\end{equation}
If $k$ is even, by Lemma \ref{lem:asym}, and \eqref{eq:P_bound}, we find the bound of $A_1(x;q)$
\begin{align*}
	A_1(x;q) &\ll \exp\left(-\frac{\pi}{12kz}-\frac{d^2 \Li_2\left(X^\frac{k}d\right)}{4\pi k z}\right)& \text{if $\frac{k}{d}$ is odd},\\
	A_1(x;q) &\ll \exp\left(-\frac{\pi}{12kz}-\frac{d^2 \Li_2\left(X^\frac{k}d\right)}{4\pi k z} -\frac{d^2\left[\Li_2\left(X^{\frac{k}d}\right)-4 \Li_2\left(X^{\frac{k}{2d}}\right)\right]}{4\pi t k z} \right) &\text{if $\frac{k}{d}$ is even}.
\end{align*}
Note that $\Li_2\left(X^{\frac{k}d}\right)-4 \Li_2\left(X^{\frac{k}{2d}}\right) <0$ if $0<X<1$.
\end{proof}

\subsection{Asymptotic behavior of \texorpdfstring{$F_t (x;q)$}{} for \texorpdfstring{$t$}{} even}\label{sec:F_t_even}

When $t$ is even, by Theorem~\ref{thm:gen_F_t}, we will consider
\[
	\sum_{n \geq 0} dd_t(2n;x) q^n =F_t(x;q^{\frac12}) =    (-q;q)_\infty \left( (1-x^2)q^{t}; q^{t}\right)_\infty^{\frac{t-2}{2}} D^*(x;q^{\frac{t}2}) H^*(x;q^{\frac{t}2}).
\]
As before, we will split the generating function $F_t(x;q^{\frac{1}{2}})= \sum_{\pm}B_0^{\pm}(x;q)+B_1^{\pm}(x;q)$, where
\begin{align}
	&\begin{aligned} \nonumber
	B_0^{\pm}(x;q)&:= \frac{1}{2(1+x)}\left( 1\pm \sqrt{\frac{1-x}{1+x}}\right)(-q;q)_{\infty} \left( (1-x^2) q^{t}; q^{t}\right)_\infty^{\frac{t-2}{2}} \\
	&\hskip 160pt  \times ( (1-x^2) q^{2t} ;q^{2t})_{\infty} (\pm\sqrt{1-x^2};-q^{\frac{t}2})_{\infty}, 
	\end{aligned}
	\\
	&\begin{aligned} \label{eqn:B1defn}
	B_1^{\pm}(x;q)&:=\frac{1}{2x(1+x)}\left( 1\pm \sqrt{\frac{1-x}{1+x}}\right)(-q;q)_{\infty} \left( (1-x^2) q^{t}; q^{t}\right)_\infty^{\frac{t-2}{2}} \\
	&\hskip 165pt \times ( (1-x^2) q^{t} ;q^{2t})_{\infty} (\pm\sqrt{1-x^2};-q^{\frac{t}2})_{\infty}. 
	\end{aligned}
\end{align}

\begin{lemma}\label{lem:asymp_gen_even}
	Suppose $0<x<\sqrt{2}$ and let $d=\gcd(t,k)$. For $j=0$ and $1$, we have the following.
	\begin{enumerate}
		\item If $k$ is odd, as $z \to 0$ in $D_\theta$,
		\[
			B_j^{\pm}(x;q) = \beta_j^{\pm}(x,h,k) \exp\left( \frac{\Li_2(1)-d^2\Li_2\left((1-x^2)^{\frac{k}d}\right)}{4\pi kz} \right)\left( 1+ O(kz) \right),
		\]
		where
		\begin{multline*}
			\beta_j^{\pm}(x,h,k):= \frac{\omega_{h,k}}{\omega_{2h,k}} \frac{1}{2\sqrt2 x^j (1+x)} \left( 1\pm \sqrt{\frac{1-x}{1+x}}\right) \left(1\mp \sqrt{1-x^2}\right)     \prod_{\ell=1}^{k}\left(1- (1-x^2) \zeta_k^{ht\ell} \right)^{\frac{t-2}{2}\left(\frac12-\frac{\ell}{k} \right)}  \\
			 \times  \prod_{\ell=1}^{k} \left(1- (1-x^2) \zeta_k^{ht(2\ell-j)}\right)^{\left(\frac12-\frac{2\ell-j}{2k} \right)}\prod_{\ell=1}^{2k}\left(1\mp (-1)^\ell \sqrt{1-x^2} \zeta_k^{\frac{ht\ell}{2}} \right)^{\left(\frac12-\frac{\ell}{2k} \right)}.
		\end{multline*}
		
		\item If $k$ is even, as $z \to 0$ in $D_\theta$,
		\begin{align*}
			B_0^{\pm}(x;q) &\ll \exp\left(-\frac{\pi}{12kz}-\frac{d^2 \Li_2\left((1-x^2)^\frac{k}d\right)}{4\pi k z} \right.\\
            &\hskip 80pt \left.+\delta_{2|\frac{k}{d}} \frac{d^2\left[\Li_2\left((1-x^2)^{\frac{k}d}\right)-6\Li_2\left((1-x^2)^{\frac{k}{2d}}\right)+8 \Li_2\left((\mp \sqrt{1-x^2})^{\frac{k}{2d}}\right)\right]}{4\pi t k z} \right),\\
			B_1^{\pm}(x;q) &\ll \exp\left(\!-\frac{\pi}{12kz}-\frac{d^2 \Li_2\left((1-x^2)^\frac{k}d\right)}{4\pi k z}\! - \delta_{2|\frac{k}{d}}\frac{d^2\left[ \Li_2\left((1-x^2)^{\frac{k}{2d}}\right)-4 \Li_2\left((\mp \sqrt{1-x^2})^{\frac{k}{2d}}\right)\right]}{2\pi t k z} \right).
		\end{align*} 
	\end{enumerate}
\end{lemma}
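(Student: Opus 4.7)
The plan parallels that of Lemma \ref{lem:asymp_gen_odd}: apply Lemma \ref{lem:asym} to each $q$-Pochhammer factor of $B_j^{\pm}(x;q)$, handle $(-q;q)_\infty$ via \eqref{eq:P_asym} for $k$ odd or \eqref{eq:P_bound} for $k$ even, and then consolidate. The genuinely new ingredient here is the factor $(\pm\sqrt{1-x^2};-q^{t/2})_\infty$, which calls for the fourth formula of Lemma \ref{lem:asym} with the role of $t$ played by $t/2$. Setting $X := 1-x^2 \in (-1,1)$, I decompose
\begin{align*}
\Log B_j^{\pm}(x;q) &= \Log(-q;q)_\infty + \tfrac{t-2}{2}\Log(Xq^t;q^t)_\infty + \Log(Xq^{(2-j)t};q^{2t})_\infty \\
&\quad + \Log(\pm\sqrt{X};-q^{t/2})_\infty + c_j^{\pm}(x),
\end{align*}
where $c_j^{\pm}(x)$ gathers the $z$-independent prefactors in $B_j^\pm$.

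When $k$ is odd, every base falls in the ``$k/d$ odd'' branch of Lemma \ref{lem:asym}, and in addition $\gcd(t/2,k)=d$. The four singular contributions total
\[
-\tfrac{(t-2)d^2\Li_2(X^{k/d})}{4\pi tkz} - \tfrac{d^2\Li_2(X^{k/d})}{4\pi tkz} - \tfrac{d^2\Li_2((\pm\sqrt X)^{2k/d})}{4\pi tkz} = -\tfrac{d^2\Li_2(X^{k/d})}{4\pi kz},
\]
using that $2k/d$ is even and absorbs the sign in the last term. Combined with $\Li_2(1)/(4\pi kz)$ from \eqref{eq:P_asym}, this produces the stated exponent, while the non-singular residues in Lemma \ref{lem:asym} assemble into $\beta_j^\pm(x,h,k)$; in particular, the boundary term $\Log(1\mp\sqrt X)$ appearing in the fourth formula supplies exactly the factor $(1\mp\sqrt{1-x^2})$ visible in the definition of $\beta_j^\pm$. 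The magnitude claim then follows from \eqref{eq:prod_bound1}, \eqref{eq:prod_bound2}, and the analogous pairing identity
\[
\prod_{\ell=1}^{2k}\bigl|1\mp(-1)^\ell\sqrt X\,\zeta_k^{ht\ell/2}\bigr|^{\frac12-\frac{\ell}{2k}} = \bigl|1\mp\sqrt X\bigr|^{-\frac12},
\]
obtained by pairing $\ell$ with $2k-\ell$ (the corresponding factors are complex conjugates).

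For $k$ even, \eqref{eq:P_bound} yields the $e^{-\pi/(12kz)}$ prefactor, and the Pochhammers deliver the baseline $-d^2\Li_2(X^{k/d})/(4\pi kz)$. The additional bracketed terms in the stated bounds arise only when $k/d$ is even, at which point the ``$k/d$ even'' branches of the second and fourth formulas of Lemma \ref{lem:asym} activate: the $(Xq^{(2-j)t};q^{2t})_\infty$-factor produces an extra $\Li_2(X^{k/(2d)})$-contribution, and the $(\pm\sqrt X;-q^{t/2})_\infty$-factor produces a combination of $\Li_2(X^{k/(2d)})$ and $\Li_2((\mp\sqrt X)^{k/(2d)})$, which are consolidated via \eqref{eq:Li2_dup}. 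The main obstacle is the gcd bookkeeping: for the $(\pm\sqrt X;-q^{t/2})_\infty$-factor one must track $d_1:=\gcd(t/2,k)$ and the parity of $k/d_1$ separately from $d$ and $k/d$, since they can disagree when $k$ is even. A short $2$-adic valuation analysis, writing $k=2^ak'$ and $t=2^bt'$ with $k',t'$ odd, shows that $k/d$ even forces $b<a$, whence $d_1=d/2$ and $k/d_1=2k/d$ is also even, so both parity-sensitive factors lie in the ``even'' branch simultaneously and the bookkeeping stays tractable. Matching the resulting linear combinations of dilogarithms to the precise bracketed expressions in the statement then reduces to routine but delicate arithmetic following the template of Lemma \ref{lem:asymp_gen_odd}.
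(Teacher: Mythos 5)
Your proposal follows the paper's proof essentially verbatim: the same factor-by-factor decomposition of $\Log B_j^{\pm}$, Lemma~\ref{lem:asym} applied to each Pochhammer symbol (with $t\mapsto t/2$ for $(\pm\sqrt{1-x^2};-q^{t/2})_\infty$), \eqref{eq:P_asym} and \eqref{eq:P_bound} for $(-q;q)_\infty$, and the pairing identities for the magnitudes --- your conjugate-pairing product is exactly the paper's \eqref{eq:prod_bound3}, and your $2$-adic bookkeeping for $2\mid\frac{k}{d}$ is correct and in fact more explicit than what the paper records. The one loose end, which the paper shares, is the subcase where $k$ is even and $\frac{k}{d}$ is odd but the exact power of $2$ dividing $k$ equals that dividing $t$ (e.g.\ $t=k=2$): there $\gcd(t/2,k)=d/2$ and $k/\gcd(t/2,k)=2k/d$ is even, so the $(\pm\sqrt{1-x^2};-q^{t/2})_\infty$ factor still falls into the ``even'' branch of Lemma~\ref{lem:asym} and produces additional dilogarithm terms; your assertion that extra terms arise \emph{only} when $2\mid\frac{k}{d}$ therefore needs a separate check (that those terms have the right sign to be absorbed into the stated upper bound) in that subcase.
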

\begin{proof}
Let $X=1-x^2$. Then $|X|<1$. First, we consider $B_0^{\pm}(x;q)$.
By Lemma \ref{lem:asym}, if $k$ is odd,
\begin{align*}
	&\Log \left[(Xq^{t} ; q^{t})_{\infty}^{\frac{t-2}{2}}( X q^{2t} ;q^{2t})_{\infty} (\pm\sqrt{X};-q^{\frac{t}2})_{\infty}\right]\\
	&=-\frac{d^2 \Li_2\left(X^\frac{k}d\right)}{4\pi k z} + \Log(1\mp \sqrt X) + \frac{t-2}{2}\sum_{\ell =1}^{k}  \left(\frac12-\frac{\ell}{k} \right) \Log \left(1- X \zeta_k^{ht\ell} \right)\\
	&\quad + \sum_{\ell =1}^{k}  \left(\frac12-\frac{\ell}{k} \right) \Log \left(1- X \zeta_k^{2ht\ell} \right)+ \sum_{\ell =1}^{2k}  \left(\frac12-\frac{\ell}{2k} \right) \Log \left(1\mp (-1)^\ell \sqrt X \zeta_k^{\frac{ht\ell}2} \right) + O(kz).
\end{align*}
Thus, if $k$ is odd, using \eqref{eq:P_asym}, we obtain 
\[
	B_0^{\pm}(x;q)= \beta_0^{\pm}(x,h,k) \exp\left( \frac{\Li_2(1)-d^2\Li_2\left(X^{\frac{k}d}\right)}{4\pi kz} \right)\left( 1+ O(kz) \right).
\]

By \eqref{eq:prod_bound1} and the following identity
\begin{equation}\label{eq:prod_bound3}
	 \prod_{\ell=1}^{2k}\left|1\mp (-1)^\ell \sqrt X \zeta_k^{\frac{ht\ell}2} \right|^{\left(\frac12-\frac{\ell}{2k} \right)} = \left| 1\mp \sqrt X \right|^{-\frac12} \prod_{\ell=1}^{k-1}\left| \frac{1\mp (-1)^\ell \sqrt X \zeta_k^{\frac{ht\ell}2}}{1\mp (-1)^\ell \sqrt X \zeta_k^{-\frac{ht\ell}2}} \right|^{\left(\frac12-\frac{\ell}{2k} \right)} =  \left| 1\mp \sqrt X \right|^{-\frac12},
\end{equation}
we have
\begin{equation}\label{eq:beta0}
	\left|\beta_0^{\pm}(x,h,k)\right| =  \frac{1}{2 \sqrt2 x^{\frac{t}{2}}  (1+x)}\left| 1\pm \sqrt{\frac{1-x}{1+x}}\right| \left|1\mp \sqrt{1-x^2}\right|^{\frac12}.
\end{equation}

If $k$ is even, Lemma \ref{lem:asym} with \eqref{eq:P_bound}, and \eqref{eq:beta0} yields the bound of $B_0^{\pm}(x;q)$: if $\frac{k}d$ is odd,
\[
	B_0^{\pm}(x;q) \ll \exp\left(-\frac{\pi}{12kz}-\frac{d^2 \Li_2\left(X^\frac{k}d\right)}{4\pi k z} \right),
\]
and if $\frac{k}{d}$ is even,
\[
	B_0^{\pm}(x;q) \ll \exp\left(-\frac{\pi}{12kz}-\frac{d^2 \Li_2\left(X^\frac{k}d\right)}{4\pi k z} +\frac{d^2\left[ \Li_2\left(X^{\frac{k}d}\right) -6\Li_2\left(X^{\frac{k}{2d}}\right)+8 \Li_2\left((\mp \sqrt{X})^{\frac{k}{2d}}\right)\right]}{4\pi t k z} \right).
\]

Next, we will find the asymptotic and the bound for $B_1^{\pm}(x;q)$. If $k$ is odd,
\begin{align*}
	&\Log \left[(Xq^{t} ; q^{t})_{\infty}^{\frac{t-2}{2}} ( X q^{t} ;q^{2t})_{\infty} (\pm\sqrt{X};-q^{\frac{t}2})_{\infty}\right]\\
	&=-\frac{d^2 \Li_2\left(X^\frac{k}d\right)}{4\pi k z} + \Log(1\mp \sqrt X) + \frac{t-2}{2}\sum_{\ell =1}^{k}  \left(\frac12-\frac{\ell}{k} \right) \Log \left(1- X \zeta_k^{ht\ell} \right)\\
	&\quad  +\sum_{\ell =1}^{k} \left(\frac12-\frac{2\ell-1}{2k} \right) \Log \left(1- X \zeta_k^{ht(2\ell-1)}\right) + \sum_{\ell =1}^{2k}  \left(\frac12-\frac{\ell}{2k} \right) \Log \left(1\mp (-1)^\ell \sqrt X \zeta_k^{\frac{ht\ell}2} \right) + O(kz).
\end{align*} 
Thus, we find that if $k$ is odd,
\[
	B_1^{\pm}(x;q)= \beta_1^{\pm}(x,h,k) \exp\left( \frac{\Li_2(1)-d^2\Li_2\left(X^{\frac{k}d}\right)}{4\pi kz} \right)\left( 1+ O(kz) \right).
\]

From \eqref{eq:prod_bound1}, \eqref{eq:prod_bound2}, and  \eqref{eq:prod_bound3}, we can also find the magnitude of $\beta_1^{\pm}(x,h,k)$ as
\begin{equation}\label{eq:beta1}
	\left| \beta_1^{\pm}(x,h,k)\right| = \frac{1}{2\sqrt2 x^{\frac{t}2} (1+x)}\left| 1\pm \sqrt{\frac{1-x}{1+x}}\right| \left|1\mp \sqrt{1-x^2}\right|^{\frac12}.
\end{equation}

Lastly, if $k$ is even, we have that
\begin{align*}
	B_1^{\pm}(x;q) &\ll \exp\left(-\frac{\pi}{12kz}-\frac{d^2 \Li_2\left(X^\frac{k}d\right)}{4\pi k z} \right) & \text{if $\frac{k}d$ is odd},\\
	B_1^{\pm}(x;q) &\ll \exp\left(-\frac{\pi}{12kz}-\frac{d^2 \Li_2\left(X^\frac{k}d\right)}{4\pi k z} -\frac{d^2\left[ \Li_2\left(X^{\frac{k}{2d}}\right)-4 \Li_2\left((\mp \sqrt{X})^{\frac{k}{2d}}\right)\right]}{2\pi t k z} \right) &\text{if $\frac{k}d$ is even}.
\end{align*}
\end{proof}

\section{Asymptotics of \texorpdfstring{$dd_t(2n;x)$}{}} \label{sec:circle}

In this section, we derive the asymptotic formula of $dd_t(2n;x)$ using the circle method.

\begin{theorem}\label{thm:dd_asym}
	Let $t \geq 1$ be an integer. Suppose that $|1-x^2|<1$ and $\Li_2(1-x^2) < \frac{\pi^2}{12t^2}$. If $c(x):=\frac{\pi^2}{6}-\Li_2(1-x^2)$, then we have that as $n \to \infty$,
	\[
		dd_t(2n;x) = a(x) c(x)^{\frac14} n^{-\frac34} e^{\sqrt{2 c(x) n}}\left(1 + O \left( n^{-\frac12}\right)\right),
	\]
	where
	\[
		a(x):= \begin{dcases}
			\frac1{2^{\frac34} \sqrt{\pi}x^{\frac{t-1}2} (1+x)} & \text{if $t$ is odd},\\
			\frac{b(x)}{2^{\frac74}\sqrt{\pi}x^{\frac{t}2}(1+x)} & \text{if $t$ is even},
		\end{dcases}
	\]
	and
	\[
		b(x):= \left( 1+ \sqrt{\frac{1-x}{1+x}}\right)  \left(1- \sqrt{1-x^2}\right)^{\frac12} + \left( 1- \sqrt{\frac{1-x}{1+x}}\right)  \left(1 + \sqrt{1-x^2}\right)^{\frac12}.
	\]
\end{theorem}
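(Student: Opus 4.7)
The strategy is to extract $dd_t(2n;x)$ from the generating function identity $\sum_{n\geq 0} dd_t(2n;x)q^n = F_t(x;q^{1/2})$ via Cauchy's theorem, parameterizing the circle of integration by $q = e^{-2\pi z}$ with $\Re z \asymp n^{-1/2}$ chosen near the saddle. A Farey dissection splits the unit circle into a single major arc around $h/k=0/1$ and a union of minor arcs around fractions with $2 \leq k \leq N \asymp \sqrt n$. This is Wright's variant of the circle method, which is well suited here because the generating function has a single dominant cusp at $q=1$.

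On the major arc, Lemmas \ref{lem:asymp_gen_odd}(1) and \ref{lem:asymp_gen_even}(1), specialized at $k=1$, yield
\[
F_t(x;q^{1/2}) = M(x)\exp\!\left(\frac{c(x)}{4\pi z}\right)\bigl(1+O(z)\bigr),
\]
where $M(x)$ is the sum of the multipliers ($\alpha_0+\alpha_1$ for $t$ odd, $\sum_{\pm}(\beta_0^\pm+\beta_1^\pm)$ for $t$ even) evaluated at $(x,0,1)$. Substituting this expansion and invoking Lemma \ref{lem:Bessel} with $s=0$, $A=2\pi n$, and $B=c(x)/(4\pi)$ approximates the major-arc integral by $M(x)\sqrt{c(x)/(2n)}\, I_{-1}\!\bigl(\sqrt{2c(x)n}\bigr)$, and the standard asymptotic $I_\nu(y)\sim e^y/\sqrt{2\pi y}$ converts this into $a(x)c(x)^{1/4}n^{-3/4}e^{\sqrt{2c(x)n}}$ up to relative error $O(n^{-1/2})$. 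A direct evaluation of the $q$-products at $k=1$ gives $M(x)=\sqrt{2}\,x^{-(t-1)/2}/(1+x)$ for $t$ odd and $M(x)=b(x)/(\sqrt{2}\,x^{t/2}(1+x))$ for $t$ even, which combine with the universal Bessel prefactor $1/(2^{3/4}\sqrt{2\pi})$ to produce exactly the stated $a(x)$.

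The main difficulty will be bounding the minor-arc contributions. On each arc with $k\geq 2$, parts (2) of Lemmas \ref{lem:asymp_gen_odd} and \ref{lem:asymp_gen_even} (for even $k$), together with part (1) applied to odd $k\geq 3$, provide exponential bounds whose leading rate is strictly smaller than $c(x)/(4\pi\Re z)$. The hypothesis $\Li_2(1-x^2)<\pi^2/(12t^2)$ is exactly what guarantees a uniform gap in the worst case $\gcd(t,k)=t$, where the leading polylogarithm is not suppressed by the roots of unity $\zeta_k^{ht\ell}$. Once such a gap is established, summing over the $O(n)$ Farey arcs costs only a polynomial factor and the total minor-arc contribution is absorbed into the $O(n^{-1/2})$ relative error. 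Combining this with the major-arc asymptotic then completes the proof, with the $O(n^{-1/2})$ error inherited from the $O(z)$ term in the $F_t$ expansion, the $O(y^{-1})$ term in the Bessel asymptotic, and the error in Lemma \ref{lem:Bessel}.
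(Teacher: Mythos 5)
Your proposal follows essentially the same route as the paper: Cauchy's formula with a Farey dissection of order $N=\lfloor\sqrt n\rfloor$, the expansions of Lemmas \ref{lem:asymp_gen_odd}(1) and \ref{lem:asymp_gen_even}(1) on the arcs with $k$ odd combined with Lemma \ref{lem:Bessel} and the $I$-Bessel asymptotic, the gap $c_1(x)-k^{-2}c_k(x)>\varepsilon_t$ (forced by $\Li_2(1-x^2)<\pi^2/(12t^2)$) to isolate the $k=1$ term, and the bounds of parts (2) for even $k$; your constants $M(x)$ agree with $\alpha_0+\alpha_1$ and $\sum_\pm(\beta_0^\pm+\beta_1^\pm)$ at $(x,0,1)$. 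The only cosmetic discrepancy is your description of the radius as $\Re z\asymp n^{-1/2}$ "at the saddle," whereas the paper (and Lemma \ref{lem:Bessel}) takes $\Re z=k/n$; this does not affect the argument.
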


\begin{remark}
Let $q(n)$ be the number of partitions of $n$ into distinct parts. Then, by the definition of $dd_t (2n;x)$, we have
\[
	dd_t(2n;1) = q(n) = \frac1{4 \cdot 3^{\frac14}} n^{-\frac34} e^{\pi\sqrt{\frac{n}3}} \left(1 + O \left( n^{-\frac12}\right)\right),
\]
which matches with the asymptotic formula of $q(n)$ \cite[page 82]{A}. 
\end{remark}

\begin{proof}
Let $0\le h<k\le N$ with $\gcd(h,k)=1$ and let $z=\frac kn -ik\phi$ with $-\vartheta_{h,k}'\leq \phi \leq \vartheta_{h,k}''$, where
\[
	\vartheta_{0,1}' :=\frac1{N+1}, \qquad \vartheta_{h,k}' := \frac{1}{k(k_1+k)} ~~\text{ for } h>0,\qquad\text{and}\qquad \vartheta_{h,k}'' := \frac{1}{k(k_2+k)}.
\]
Here, $\frac{h_1}{k_1}<\frac hk<\frac{h_2}{k_2}$ are adjacent Farey fractions in the Farey sequence of order $N:=\lfloor \sqrt n \rfloor$. From the theory of Farey fractions, it is well-known that
\begin{equation}\label{Fbound}
	\frac1{k+k_j} \le \frac1{N+1} \quad \text{for } j\in\{1, 2\}.
\end{equation}
Moreover, we have
\begin{equation}\label{zbound}
	\Re(z)=\frac{k}{n},\quad \Re \left(\frac{1}{z}\right) \geq \frac{k}{2}, \quad |z| \ll \frac1{\sqrt n}, \quad \text{and} \quad |z| \geq \frac kn.
\end{equation}

By Cauchy's integral formula, we obtain for $q=e^{\frac{2\pi i}k(h+iz)}$ that
\begin{align*}
	dd_t(2n;x) &=\frac{1}{2\pi i} \int_{|q|=e^{-\frac{2\pi}{n}}} F_t(x;q^{\frac12}) q^{-n-1} dq
	= \sum_{\substack{0\le h<k\le N\\\gcd(h,k)=1}} e^{-\frac{2\pi i n h}k} \int_{-\vartheta_{h,k}'}^{\vartheta_{h,k}''} F_t(x;q^{\frac12}) e^\frac{2\pi n z}k d\phi. 
\end{align*}
Next, we split the sum as $dd_t(2n;x)=\Sigma_1 + \Sigma_2$, where
\begin{align*}
	\Sigma_1 &:= \sum_{\substack{0\le h<k\le N\\\gcd(h,k)=1\\2\nmid k}} e^{-\frac{2\pi i n h}k} \int_{-\vartheta_{h,k}'}^{\vartheta_{h,k}''} F_t(x;q^\frac12) e^\frac{2\pi n z}k d\phi ~\text{ and }~
	\Sigma_2 &:= \sum_{\substack{0< h<k\le N\\\gcd(h,k)=1\\2|k}} e^{-\frac{2\pi i n h}k} \int_{-\vartheta_{h,k}'}^{\vartheta_{h,k}''} F_t(x;q^\frac12) e^\frac{2\pi n z}k d\phi.
\end{align*}

First, we suppose that $t$ is odd. Recall that $F_t(x;q^{\frac12})=A_0(x;q)+A_1(x;q)$. By Lemma \ref{lem:asymp_gen_odd} (1), 
\begin{multline*}
	\Sigma_1 = \sum_{\substack{0\le h<k\le N\\\gcd(h,k)=1\\2\nmid k}} \big( \alpha_0(x,h,k)+\alpha_1(x,h,k) \big)e^{-\frac{2\pi i n h}k}\\ 
	 \times \int_{-\vartheta_{h,k}'}^{\vartheta_{h,k}''} \exp\left( \frac{\Li_2(1)-d_k^2\Li_2\left((1-x^2)^{\frac{k}{d_k}}\right)}{4\pi kz} \right)\left( 1+ O(kz) \right) e^\frac{2\pi n z}k d\phi,
\end{multline*}
where  $d_k:=\gcd(t,k)$.
Since $\Li_2(u)$ is strictly increasing on the interval $(-1,1)$, $d_k \leq t$, and $\Li_2(1-x^2) < \frac{\pi^2}{12t^2}$, we can guarantee that
\[
	\Li_2(1)-d_k^2\Li_2\left((1-x^2)^{\frac{k}{d_k}}\right) \geq \Li_2 (1) - t^2 \Li_2 ( 1-x^2) >0.
\]
If we set
\[
	\mathcal{I}_{s}(A,B) := \int_{-\vartheta_{h,k}'}^{\vartheta_{h,k}''} z^{-s} e^{Az+\frac{B}{z}} d\phi
	= \frac1{ik} \int_{\frac kn-\frac{i}{\left(k+k_2\right)}}^{\frac kn+\frac{i}{\left(k+k_1\right)}} z^{-s} e^{Az+\frac Bz} dz,
\]
for $s\in\mathbb{R}$, we have 
\begin{align*}
	\Sigma_1 &= \sum_{\substack{0\le h<k\le N\\\gcd(h,k)=1\\2\nmid k}} \big( \alpha_0(x,h,k)+\alpha_1(x,h,k) \big)e^{-\frac{2\pi i n h}k} \bigg[ \mathcal{I}_0\left(a, b \right) + O \big(k \mathcal{I}_{-1} \left(a, b \right) \big) \bigg],
\end{align*}
where $a:=\frac{2\pi n}{k}$ and $b:=\frac{c_k(x)}{4\pi k}$ with $c_k(x):=\Li_2(1)-d_k^2\Li_2\left((1-x^2)^{\frac{k}{d_k}}\right)$.
By Lemma \ref{lem:Bessel}
and the asymptotic formula $I_{s} (x) = \frac{e^{x}}{\sqrt{2 \pi x}} \left(1 +O\left(\frac1{x}\right)\right)$,
we obtain
\begin{align*}
	\mathcal{I}_0\left(a, b \right) &= \frac{1}{k} \sqrt{\frac{c_k(x)}{2n}} I_{-1} \left( \frac1k \sqrt{2c_k(x)n }\right) + O\left( n^{-\frac12}\right) =\frac{c_k(x)^{\frac14}}{2^{\frac54}\sqrt{\pi k}} n^{-\frac34} e^{\frac1k \sqrt{2c_k(x)n }} \left( 1+ O\left( n^{-\frac12}\right)\right),\\
	\mathcal{I}_{-1}\left(a, b \right) &= \frac{c_k(x)}{4\pi k n}  I_{-2} \left( \frac1k \sqrt{2c_k(x)n }\right) + O\left( n^{-1} \right) = \frac{c_k(x)^{\frac34}}{2^{\frac{11}4} \pi^{\frac32}\sqrt{k}} n^{-\frac54} e^{\frac1k \sqrt{2c_k(x)n }} \left( 1+ O\left( n^{-\frac12}\right)\right).
\end{align*}
Hence,
\begin{align*}
	\Sigma_1 &= \sum_{\substack{0\le h<k\le N\\\gcd(h,k)=1\\2\nmid k}} \big( \alpha_0(x,h,k)+\alpha_1(x,h,k) \big)e^{-\frac{2\pi i n h}k}  \frac{c_k(x)^{\frac14}}{2^{\frac54}\sqrt{\pi k}} n^{-\frac34} e^{\frac1k \sqrt{2c_k(x)n }}  \left(1 + O \left( k  n^{-\frac12}\right)\right).
\end{align*}

Here, we observe that
\begin{align*}
	c_1(x) - \frac1{k^2}c_k(x) 
	&= \Li_2(1) - \Li_2 (1-x^2) -\frac{1}{k^2} \left( \Li_2(1) - d_k^2 \Li_2\left((1-x^2)^{\frac{k}{d_k}}\right) \right)\\
	&= \sum_{\substack{n \ge 1 \\ n \not\equiv 0 \!\!\!\pmod{k} }} \frac{1}{n^2}  -  \sum_{\substack{n \ge 1 \\ n \not\equiv 0 \!\!\!\pmod{\frac{k}{d_k}} }} \frac{(1-x^2)^n}{n^2}\\
	&= \sum_{\substack{n \ge 1 \\ n \not\equiv 0 \!\!\!\pmod{k}  \\ n \equiv 0 \!\!\!\pmod{\frac{k}{d_k}}}} \frac{1}{n^2}  +  \sum_{\substack{n \ge 1 \\ n \not\equiv 0 \!\!\!\pmod{\frac{k}{d_k}} }} \frac{1-(1-x^2)^n}{n^2}=:S_1 + S_2.
\end{align*}
If $S_2=0$, which is the case when $d_k=k$, then for $k \geq 3$, 
\begin{align*}
	c_1(x) - \frac1{k^2}c_k(x) &= \sum_{\substack{n \ge 1 \\ n \not\equiv 0 \!\!\!\pmod{k}}} \frac{1}{n^2} \geq \sum_{\substack{n \ge 1 \\ n \not\equiv 0 \!\!\!\pmod{3}}} \frac{1}{n^2} = \frac{4\pi^2}{27}.
\end{align*}
If $S_2>0$, then 
\begin{align*}
	c_1(x) - \frac1{k^2}c_k(x) &\geq  \sum_{\substack{n \ge 1 \\ n \not\equiv 0 \!\!\!\pmod{\frac{k}{d_k}} }} \frac{1-(1-x^2)^n}{n^2}\\
	 &\geq \sum_{ \substack{ n \ge 1 \\ n \not\equiv 0 \!\!\!\pmod{3} }} \frac{1-(1-x^2)^n}{n^2} =  \frac{4\pi^2}{27} -\frac89 \Li_2(1-x^2) > \left(1- \frac{1}{2t^2} \right) \frac{4\pi^2}{27},
\end{align*}
where the assumption $\Li_2(1-x^2)<\frac{\pi^2}{12 t^2}$ is used. 
Thus, for $k \geq 3$, there exists  $\varepsilon_t =\left(1- \frac{1}{2t^2} \right) \frac{4\pi^2}{27}>0$, which only depends on $t$, such that 
\begin{equation}\label{eq:c_k}
	c_1(x) - \frac1{k^2}c_k(x) > \varepsilon_t,
\end{equation}
which implies that the dominant term for $\Sigma_1$ is the term when $k=1$.
Using \eqref{eq:alpha0},  \eqref{eq:alpha1}, \eqref{Fbound}, \eqref{eq:c_k}, and the bound $|c_k(x)| \ll k^2$, we bound the sum over $k >1$ as
\begin{align*}
	& \sum_{\substack{0< h<k\le N\\\gcd(h,k)=1\\2\nmid k}} \big( \alpha_0(x,h,k)+\alpha_1(x,h,k) \big)e^{-\frac{2\pi i n h}k}  \frac{c_k(x)^{\frac14}}{2^{\frac54}\sqrt{\pi k}} n^{-\frac34} e^{\frac1k \sqrt{2c_k(x)n }}  \left(1 + O \left( n^{-\frac12}\right)\right) \\
	&\ll n^{-\frac34}\sum_{\substack{0< h<k\le N\\\gcd(h,k)=1}}\frac1{\sqrt k} \big(|\alpha_0(x,h,k)|+|\alpha_1(x,h,k)|\big) |c_k(x)|^{\frac14}e^{\frac1k \sqrt{2c_k(x)n }}\\
	&\ll_t n^{-\frac34}\sum_{\substack{0< h<k\le N}} \frac1{\sqrt k} e^{\sqrt{2\left( c_1(x) - \varepsilon_t \right)n}}  \ll_t  e^{\sqrt{2\left( c_1(x) - \varepsilon_t \right)n}}.  
\end{align*}

Next, we bound $\Sigma_2$. By Lemma \ref{lem:asymp_gen_odd} (2) with \eqref{eq:alpha0},  \eqref{eq:alpha1}, \eqref{Fbound}, \eqref{zbound}, and the condition $\Li_2(1-x^2) < \frac{\pi^2}{12t^2}$,
\begin{align*}
	\Sigma_2 &\ll _t\sum_{\substack{0< h<k\le N\\\gcd(h,k)=1\\2|k}} \left(\vartheta_{h,k}' +\vartheta_{h,k}''\right) \\ 
	&\hskip 55pt \times \max_{z}\left|\exp\left(\frac{2\pi nz}k -\frac{\pi}{12kz}-\left(1+\frac1t\right)\frac{d_k^2 \Li_2\left((1-x^2)^\frac{k}{d_k}\right)}{4\pi k z} + \frac{d_k^2 \Li_2\left((1-x^2)^{\frac{k}{2d_k}}\right)}{\pi t k z} \right) \right|\\
	&\ll_t \frac1{N+1}  \sum_{\substack{0< h<k\le N}} \frac1k e^{2\pi} \ll 1.
\end{align*}
Therefore, with the facts $c_1(x)= c(x)$ and $\alpha_0(x,0,1) = \alpha_1(x,0,1) = \frac{1}{\sqrt{2} x^{\frac{t-1}{2}}(1+x)}$, the dominant term for $k=1$ gives 
\[
	dd_t(2n;x) =\frac{c(x)^{\frac14} }{2^{\frac34} \sqrt{\pi}x^{\frac{t-1}2} (1+x)} n^{-\frac34} e^{\sqrt{2c(x)n}}\left(1 + O \left( n^{-\frac12}\right)\right).
\]

Next, for the case when $t$ is even, $F_t(x;q^{\frac{1}{2}})= \sum_{\pm}B_0^{\pm}(x;q)+B_1^{\pm}(x;q)$. By Lemma \ref{lem:asymp_gen_even}, we obtain
\begin{align*}
	\Sigma_1 &= \sum_{\substack{0\le h<k\le N\\\gcd(h,k)=1\\2\nmid k}} \sum_{\pm}\big( \beta_0^{\pm}(x,h,k)+\beta_1^{\pm}(x,h,k) \big)e^{-\frac{2\pi i n h}k}\\ 
	& \hskip 100pt \times \int_{-\vartheta_{h,k}'}^{\vartheta_{h,k}''} \exp\left( \frac{\Li_2(1)-d_k^2\Li_2\left((1-x^2)^{\frac{k}{d_k}}\right)}{4\pi kz} \right)\left( 1+ O(kz) \right) e^\frac{2\pi n z}k d\phi\\
	&= \sum_{\substack{0\le h<k\le N\\\gcd(h,k)=1\\2\nmid k}} \sum_{\pm}\big( \beta_0^{\pm}(x,h,k)+\beta_1^{\pm}(x,h,k) \big) e^{-\frac{2\pi i n h}k} \frac{c_k(x)^{\frac14}}{2^{\frac54}\sqrt{\pi k}} n^{-\frac34} e^{\frac1k \sqrt{2c_k(x)n }} \left( 1+ O\left( n^{-\frac12}\right)\right).
\end{align*}
As for the case when $t$ is odd, the dominant term is from $k=1$, and we can bound the sum $\Sigma_1$ for $k>1$ and $\Sigma_2$ using Lemma \ref{lem:asymp_gen_even} (2), \eqref{eq:beta0}, and \eqref{eq:beta1}. Hence,
\[
	\beta_0^{\pm}(x,0,1) = \beta_1^{\pm}(x,0,1) = \frac1{2\sqrt{2}x^{\frac{t}2}(1+x)} \left( 1\pm \sqrt{\frac{1-x}{1+x}}\right) \left(1\mp \sqrt{1-x^2}\right)^{\frac12},  
\]
from which the $t$ even case follows.
\end{proof}

\section{Proof of Theorems~\ref{thm:normal} and~\ref{thm:shift_normal}} \label{sec:normal}

To prove that $N_{t, 2n}^{\DD}$ and $\widehat{N}_{t,2n}^{\DD}$ follow normal distribution asymptotically, we will use Curtiss' theorem \cite[Theorem 2]{Cu} as in the previous works \cite{COS,GOT}.

\begin{theorem} \label{thm:Curtiss}
Let $\{X_n\}$ be a sequence of real random variables. Then, define the corresponding moment generating function
\[
	M_{X_n} (r) := \int_{-\infty}^{\infty} e^{rx} d F_n(x),
\]
where $F_n(x)$ is the cumulative distribution function associated with $X_n$. If the sequence $\{ M_{X_n} (r) \}$ converges pointwise on a neighborhood of $r=0$, then $\{X_n\}$ converges in distribution. 
\end{theorem}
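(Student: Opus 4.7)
The plan is to deduce weak convergence by combining a tightness argument with the uniqueness of distributions sharing a common moment generating function on a neighborhood of the origin. Write $M(r) := \lim_{n\to\infty} M_{X_n}(r)$, which exists by hypothesis for $r$ in some open interval $(-r_0, r_0)$.

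First I would establish tightness of the sequence $\{X_n\}$. For any $t > 0$ and any $r \in (0, r_0)$, Markov's inequality applied to $e^{rX_n}$ and $e^{-rX_n}$ gives
\[
\mathbb{P}(|X_n| \ge t) \le e^{-rt}\bigl(M_{X_n}(r) + M_{X_n}(-r)\bigr).
\]
Since $M_{X_n}(\pm r) \to M(\pm r)$, the prefactor is bounded uniformly in $n$, so the tail probability can be made arbitrarily small by choosing $t$ large. Helly's selection theorem applied to the resulting tight family of cumulative distribution functions then guarantees that every subsequence of $\{X_n\}$ admits a further subsequence converging in distribution to some random variable $X$.

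Next I would identify each subsequential limit by showing that its moment generating function agrees with $M$ on a neighborhood of $0$. Fix $r \in (-r_0, r_0)$ and choose $\varepsilon > 0$ with $|r| + \varepsilon < r_0$; the uniform bound on $M_{X_n}(r \pm \varepsilon)$ controls the upper tail of $e^{rX_n}$ via a Chernoff estimate and thereby implies that $\{e^{rX_n}\}$ is uniformly integrable. Hence weak convergence along the chosen subsequence upgrades to $\mathbb{E}\bigl[e^{rX}\bigr] = M(r)$ for all such $r$.

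The main obstacle is the concluding uniqueness step: a probability distribution is determined by its moment generating function on an open interval containing $0$. I would argue this by noting that finiteness of $M$ on $(-r_0, r_0)$ forces the bilateral Laplace transform $\psi(\zeta) := \mathbb{E}[e^{\zeta X}]$ to be analytic on the vertical strip $|\mathrm{Re}(\zeta)| < r_0$ (differentiate under the integral using domination by $e^{(|r|+\varepsilon)|X|}$), with $\psi(r)=M(r)$ on the real axis and $\psi(is)$ equal to the characteristic function on the imaginary axis. Two distributions whose MGFs coincide on a neighborhood of $0$ therefore produce analytic transforms $\psi$ agreeing on a real segment, and so on the whole strip by analytic continuation; their characteristic functions are then equal, and the Lévy inversion theorem forces the distributions themselves to coincide. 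Because every subsequential weak limit of $\{X_n\}$ has the same distribution, the subsequence principle yields convergence in distribution of the full sequence, completing the proof.
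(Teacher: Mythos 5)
Your proof is correct, but the paper does not actually prove this statement---it is Curtiss' theorem, quoted with the citation \cite[Theorem 2]{Cu}---so the relevant comparison is with Curtiss' original argument rather than with anything in the paper. Your route (Markov bounds give tightness; Helly's selection theorem extracts subsequential weak limits; uniform integrability of $e^{rX_n}$, coming from the uniform bound on $M_{X_n}(\pm(|r|+\varepsilon))$, upgrades weak convergence to $\mathbb{E}\bigl[e^{rX}\bigr]=M(r)$ on the whole interval; analyticity of the two-sided Laplace transform on the strip $|\mathrm{Re}(\zeta)|<r_0$ together with L\'{e}vy inversion shows all subsequential limits share one distribution; the subsequence principle concludes) is a complete and standard modern proof. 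Curtiss proceeds differently: the $M_{X_n}$ extend analytically to the strip, the ridge bound $|M_{X_n}(\zeta)|\le M_{X_n}(\mathrm{Re}\,\zeta)$ together with convexity and convergence at two real points gives local uniform boundedness there, Vitali's convergence theorem then yields locally uniform convergence of the analytic extensions throughout the strip (which contains the entire imaginary axis), and restricting to the imaginary axis gives pointwise convergence of the characteristic functions to a limit continuous at $0$, so L\'{e}vy's continuity theorem finishes. In short, you substitute weak-convergence machinery (Helly, uniform integrability, subsequence principle) for Curtiss' complex-analytic compactness argument (Vitali); yours is longer but avoids normal-family theory, and both ultimately rest on the same uniqueness facts for Fourier--Laplace transforms. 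One step you should make explicit or cite: ``weak convergence plus uniform integrability implies convergence of $\mathbb{E}\bigl[e^{rX_{n_k}}\bigr]$'' involves the unbounded function $e^{rx}$, so it needs a truncation or Skorokhod-representation argument (e.g., Billingsley's theorem on uniformly integrable weakly convergent sequences); as written it is asserted in a single clause.
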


\subsection{Proof of Theorem~\ref{thm:normal}} \label{subsec:nt_normal}

While Theorem~\ref{thm:Curtiss} is strong enough to show that $N_{t,2n}^{\DD}$  is asymptotically normally distributed with mean $\sim \frac{2\sqrt{3n}}{\pi}$ and variance $\sim \frac{2(\pi^2-6)\sqrt{3n}}{\pi^3}$, we will evaluate the mean and the variance separately by employing Wright's circle method to get constant terms for the comparison. 

To this end, we define the $k$-th moment of $n_{t} (\lala)$ along the set $\DD$, that is
\[
	m_{k,t} (n)  := \sum_{\substack{\lala \in \mathcal{DD}\\ \lala \vdash n}} n_t(\lala)^k. 
\]
Since 
\begin{align*}
	\mu_{t,2n} = \frac{m_{1,t} (2n)}{q(n)} \qquad \text{and} \qquad \sigma_{t,2n}^2 &= \frac{m_{2,t} (2n) }{q(n)} - \left( \frac{m_{1,t} (2n)}{q(n)}  \right)^2,
\end{align*}
it suffices to deduce the asymptotics of $m_{1,t} (2n)$ and $m_{2,t} (2n)$ to obtain the asymptotic mean and variance. Using the asymptotic formula of $q(n)$ \cite[page 82]{A}, 
\begin{align*}
	dd_t(2n;1) =q(n) = \frac1{4 \cdot 3^{\frac14}} n^{-\frac34} e^{\pi\sqrt{\frac{n}3}} & \left[1 + \! \left(\frac{\pi}{48} - \frac{9}{8\pi} \right)\!\frac1{\sqrt{3n}} +\! \left( \frac{\pi^2}{4608}- \frac{15}{128} -\frac{135}{128\pi^2}\right)\! \frac1{3n} + O \left( n^{-\frac32}\right)\right],
\end{align*}
the following theorem gives the mean and the variance in Theorem~\ref{thm:normal}.

\begin{theorem}\label{thm:total_asymp}
	As $n \to \infty$,
	\begin{align*}
		m_{1,t} (2n) &=\frac{3^{\frac14}}{2\pi} n^{-\frac14} e^{\pi \sqrt{\frac{n}3}} \left[ 1 +  \left( \frac3{8\pi} + \frac{\pi(1-12t+6\delta_{2|t})}{48}  \right) \frac1{\sqrt{3n}} \right.\\
		&\hskip 62pt \left.+ \left( \frac{81}{128\pi^2} - \frac{3(1-12t +6 \delta_{2|t})}{128} + \frac{\pi^2(1+12 \delta_{2|t} -24 t + 96 t^2)}{4608} \right) \!\frac1{3n} + O \left( n^{-\frac32}\right) \right],\\
		m_{2,t} (2n) &=  \frac{3^{\frac34}}{\pi^2} n^{\frac14} e^{\pi \sqrt{\frac{n}3}} \left[ 1 -  \left( \frac{9}{8\pi} +\frac{\pi(24t-25-12\delta_{2|t})}{48}\right) \frac1{\sqrt{3n}} \right.\\
		& \left.- \left( \frac{135}{128\pi^2} + \frac{24t-25-12\delta_{2|t}}{128} + \frac{\pi^2(239+ 48\delta_{2|t} +48t+288t\delta_{2|t}-480t^2 )}{4608} \right)\! \frac1{3n} + O \left(n^{-\frac32}\right)\right].
	\end{align*}
\end{theorem}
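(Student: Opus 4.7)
The plan is to derive Theorem~\ref{thm:total_asymp} by refining the circle-method asymptotic of $dd_t(2n;x)$ from Theorem~\ref{thm:dd_asym} to higher order in $n^{-1/2}$ and then Taylor-expanding the result in $r$, where $x = e^r$. The starting observation is
\[
    dd_t(2n;e^r) = \sum_{\substack{\lambda\lambda \in \mathcal{DD}\\ |\lambda\lambda|=2n}} e^{r\,n_t(\lambda\lambda)} = \sum_{k \ge 0} \frac{r^k}{k!}\,m_{k,t}(2n),
\]
so that $m_{k,t}(2n) = k!\,[r^k]\,dd_t(2n;e^r)$. It thus suffices to produce an expansion of $dd_t(2n;x)$ that is analytic in $x$ near $x=1$ and valid up to relative error $O(n^{-3/2})$.

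First, I would strengthen Theorem~\ref{thm:dd_asym} to
\[
    dd_t(2n;x) = a(x)\,c(x)^{\frac14}\,n^{-\frac34}\,e^{\sqrt{2c(x)n}}\left[1 + P_1(x)\,n^{-\frac12} + P_2(x)\,n^{-1} + O\!\left(n^{-\frac32}\right)\right],
\]
with $P_1, P_2$ explicit and analytic at $x=1$. This requires two refinements of the proof in Section~\ref{sec:circle}: (a) applying Lemma~\ref{lem:E_M_sum} with $N \ge 3$ to extend the $O(kz)$ errors in Lemma~\ref{lem:asym} into an explicit series in $z$, yielding refined versions of Lemmas~\ref{lem:asymp_gen_odd} and~\ref{lem:asymp_gen_even} at the principal arc $h/k=0/1$; and (b) using the full asymptotic series $I_\nu(x) \sim (e^x/\sqrt{2\pi x})\sum_j a_j(\nu)x^{-j}$ in place of the leading-order Bessel approximation applied after Lemma~\ref{lem:Bessel}. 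The bounds on the non-principal arcs continue to be exponentially smaller and are absorbed into the $O(n^{-3/2})$ error.

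Second, I would Taylor-expand each factor about $x=1$. A direct calculation gives
\[
    c(e^r) = \tfrac{\pi^2}{6} + 2r + r^2 + \tfrac{2r^3}{9} + O(r^4), \qquad \sqrt{2c(e^r)n} = \pi\sqrt{\tfrac{n}{3}}\left(1 + \tfrac{6r}{\pi^2} + \tfrac{3r^2}{\pi^2} - \tfrac{18 r^2}{\pi^4} + O(r^3)\right),
\]
together with parallel Taylor expansions of $a(e^r)$, $c(e^r)^{1/4}$, and $P_j(e^r)$. For $t$ even, the factor $b(x)$ in $a(x)$ looks singular at $x=1$ through its square roots, but substituting $s=\sqrt{1-x}$ shows that the odd-$s$ contributions from the two summands of $b(x)$ cancel, so $b(x)$ is analytic at $x=1$ with $b(1)=2$, and its first nontrivial Taylor coefficients produce precisely the $\delta_{2\mid t}$ contributions in the final statement. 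Reading off $1!\cdot [r^1]$ and $2!\cdot [r^2]$ of $dd_t(2n;e^r)$ then yields $m_{1,t}(2n)$ and $m_{2,t}(2n)$.

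The main obstacle is the simultaneous bookkeeping of three nested expansions: the circle-method expansion in $n^{-1/2}$, the Taylor expansion in $r$, and the cross-terms produced by $\exp\!\big(\sqrt{2c(e^r)n}\big)$, which mix powers of $r$ with powers of $n^{1/2}$ and must be tracked to order $r^2 \cdot n^{-1}$ in order to secure the claimed $O(n^{-3/2})$ error in each moment. Keeping the parity-dependent contributions from $b(x)$ straight so as to recover the $\delta_{2\mid t}$ terms exactly is the most delicate step; once all three expansions are aligned and the cross-terms collected, direct comparison with the displayed constants completes the proof.
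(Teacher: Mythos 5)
Your route is genuinely different from the paper's, and substantially heavier. The paper never refines the two-variable asymptotic of $dd_t(2n;x)$: it first applies $x\tfrac{\partial}{\partial x}$ to $F_t(x;q^{1/2})$ and sets $x=1$, which collapses all the infinite products $((1-x^2)q^t;q^t)_\infty$ to $1$ and their logarithmic derivatives to Lambert-type sums, so that $S_{k,t}(q)=\sum_n m_{k,t}(2n)q^n=(-q;q)_\infty f_{k,t}(q)$ with $f_{k,t}$ an explicit rational function of $q$ (e.g.\ $f_{1,t}(q)=tq^t/(1-q^t)$ for odd $t$). A single application of Wright's circle method to this one-variable series, with the modular transformation of $(-q;q)_\infty$ and an Euler--Maclaurin expansion of $f_{k,t}(e^{-z})$, then gives the stated two-term refinements directly. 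This avoids entirely the computation of your $P_1(x)$, $P_2(x)$ and all questions of uniformity in $x$.

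The concrete gap in your proposal is the step $m_{k,t}(2n)=k!\,[r^k]\,dd_t(2n;e^r)$ combined with term-by-term Taylor expansion of the asymptotic formula. An asymptotic expansion in $n$ that holds pointwise for each fixed $x$ cannot in general be differentiated (or have its Taylor coefficients extracted) with respect to the parameter $x$; the $O(n^{-3/2})$ error could a priori oscillate wildly in $r$ and contribute at leading order to $[r^k]$. To repair this you must (i) establish your refined expansion with an error term uniform on a \emph{complex} neighborhood of $x=1$ --- note that Lemmas~\ref{lem:asymp_gen_odd} and~\ref{lem:asymp_gen_even} are stated only for real $0<x<\sqrt{2}$, so the minor-arc bounds and the principal-arc analysis must be redone for complex $x$ --- and then (ii) extract $[r^k]$ by Cauchy's formula on a circle $|r|\asymp n^{-1/2}$, on which $\mathrm{Re}\,\sqrt{2c(e^r)n}\le \pi\sqrt{n/3}+O(1)$, so that the uniform error transfers to an admissible error on each moment. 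Neither step is addressed in the proposal, and without them the identification of the displayed constants is not justified. The rest of your computations that can be checked (the expansions of $c(e^r)$ and $\sqrt{2c(e^r)n}$, the analyticity of $b$ at $x=1$ with $b(1)=2$) are correct.
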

\begin{proof}
We will only give a brief sketch for the proof. We first note that
\[
	S_{k,t} (q) := \sum_{n \geq 1} m_{k,t} (2n) q^n = \left.  \left(x \frac{\partial }{\partial x}\right)^k F_t(x; q^{\frac12})\right|_{x=1}.
\]
From Theorem~\ref{thm:gen_F_t}, we see that $S_{k, t} (q) = (-q;q)_{\infty} f_{k,t} (q)$ for $k=1$ or $2$, where $f_{k,t} (q)$ is a rational function in $q$. For instance, $f_{1,t}(q) = \frac{t q^t}{1-q^t}$ for odd $t$. As the asymptotic behavior of $S_{k,t}(q)$ is dominated by $(-q;q)_{\infty}$, we will employ Wright's circle method (for a systematic approach to Wright's circle method, see \cite{NR}). If we let $q=e^{-z}$, then by~ \eqref{eq:eta_trans},
\[
	(-q;q)_\infty =  \frac{1}{\sqrt2}e^{\frac{\pi^2}{12z}+\frac{z}{24}} \left( 1+ O\left( e^{-\frac{2\pi^2}{z}} \right) \right),
\]
and using Lemma \ref{lem:E_M_sum}, as $z \to 0$ in $D_\theta$,
\begin{align*}
	f_{1,t}(q) &= \frac1{z} -\frac{t}2+ \frac{\delta_{2|t}}4 + \frac{t^2z}{12}+ O(z^2),\\
	f_{2,t}(q) &=\frac1{z^2} +\left( 1-t +\frac{\delta_{2|t}}2 \right)\frac{1}z + \frac{5t^2}{12} - \frac14 - \frac{\delta_{2|t}}4 \left( t + \frac1{4} \right) + O(z).
\end{align*}
As $S_{1,t}(q)$ and $S_{2,t}(q)$ are relatively small away from $q=1$, by applying Wright's circle method \cite[Proposition 1.8]{NR}, we can conclude the claimed asymptotics. 
\end{proof}

Finally, we prove that $N_{t, 2n}^{\DD}$ follows a normal distribution as $n \to \infty$.

\begin{proof}[Proof of Theorem~\ref{thm:normal}]
We let $dd_{t,m} (2n)$ be the number of doubled distinct partitions $\lala$ of $2n$ with $n_t (\lala ) = m$ and define
\[
	M\left(N^{\mathcal{DD}}_{t,2n};r\right) : = \frac1{q(n)} \sum_{m \geq 0} dd_{t,m} (2n) e^{\frac{(m- \mu_{t,2n})r}{\sigma_{t,2n}} },
\]
where $\mu_{t,2n}$ and $\sigma_{t,2n}^{2}$ are defined as claimed asymptotics in Theorem~\ref{thm:normal}. 
Then from the definition of $F_t(x;q)$, we have
\[
	M\left( N^{\mathcal{DD}}_{t,2n};r\right) = \frac{dd_t\left(2n;e^{{\frac{r}{\sigma_{t, 2n}}}}\right)}{q(n)} e^{-\frac{\mu_{t,2n}}{\sigma_{t,2n}}r}.
\]
By Theorem \ref{thm:dd_asym} and $dd_t(2n;1)=q(n)$,
\[
	M\left(N^{\mathcal{DD}}_{t,2n} ;r\right) = \frac{a \left(e^{\frac{r}{\sigma_{t,2n}}}\right) c \left(e^{\frac{r}{\sigma_{t,2n}}}\right)^{\frac14}}{a(1)c(1)^{\frac14}} e^{\sqrt{2n}\left( c\left(e^{\frac{r}{\sigma_{t,2n}}}\right)^{\frac12} - c(1)^{\frac12} \right) -\frac{\mu_{t,2n}}{\sigma_{t,2n}}r} \left( 1+ O\left(n^{-\frac12}\right)\right).
\]
We observe that $\frac{r}{\sigma_{t,2n}}>0$ and $e^{\frac{r}{\sigma_{t,2n}}} \to 1$ as $n \to \infty$. Using the expansion
\[
	\sqrt{\frac{\pi^2}{6}-\Li_2\left(1-e^{2u}\right)} = \frac{\pi}{\sqrt6} + \frac{\sqrt{6}u}{\pi} + \frac{\sqrt{\frac32}(\pi^2-6)u^2}{\pi^3} + O(u^3)  \quad\text{ as } u \to 0,
\]
we find that
\[
	c\left( e^{\frac{r}{\sigma_{t,2n}}}\right)^{\frac12} = \bigg(\frac{\pi^2}{6} -\Li_2\big(1-e^{\frac{2r}{\sigma_{t,2n}}}\big) \bigg)^{\frac12}=  \frac{\pi}{\sqrt6} + \frac{\sqrt{6}}{\pi} \left(\frac{r}{\sigma_{t,2n}} \right) + \frac{\sqrt{\frac32}(\pi^2-6)}{\pi^3}  \left(\frac{r}{\sigma_{t,2n}} \right)^2  + O\left( \left(\frac{r}{\sigma_{t,,2n}} \right)^3\right).
\]
Thus, we conclude that
\[
	M\left(N^{\mathcal{DD}}_{t,2n};r\right) =  e^{\frac{r^2}2 +o_r(1)} \left( 1+ O_r\left( n^{-\frac12}\right) \right),
\]
which completes the proof from Theorem~\ref{thm:Curtiss} as the moment generating function is asymptotically the same as that of the normal distribution. 
\end{proof}

\subsection{Proof of Theorem \ref{thm:shift_normal}}

The proof of Theorem~\ref{thm:shift_normal} proceeds as before, so we omit the details here. We start with deriving the asymptotic formula of $\widehat{dd}_{t} (2n;x)$.

\begin{theorem}\label{thm:shift_asym}
	Let $t \geq 1$ be an integer. Suppose that $|1-x|<1$ and $\Li_2(1-x) < \frac{\pi^2}{12t^2}$. If $\widehat{c}(x):=\frac{\pi^2}{6}-\Li_2(1-x)$, then we have that as $n \to \infty$, 
	\[
		\widehat{dd}_t(2n;x) = \widehat{a}(x) \widehat{c}(x)^{\frac14} n^{-\frac34} e^{\sqrt{2 \widehat{c}(x) n}}\left(1 + O \left( n^{-\frac12}\right)\right),
	\]
	where
	\[
		\widehat{a}(x):= \begin{dcases}
			\frac1{2^{\frac74} \sqrt{\pi}x^{\frac{t-1}4}} & \text{if $t$ is odd},\\
			\frac{\left(1+ \sqrt{1-x}\right)^{\frac12}+ \left(1- \sqrt{1-x}\right)^{\frac12}}{2^{\frac{11}4}\sqrt{\pi}x^{\frac{t}4}} & \text{if $t$ is even}.
		\end{dcases}
	\]
\end{theorem}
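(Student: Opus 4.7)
The plan is to mirror the proof of Theorem~\ref{thm:dd_asym} applied to the generating function $\widehat{F}_t(x;q^{1/2})$ from Theorem~\ref{thm:shift_gen}. The structure of $\widehat{F}_t(x;q^{1/2})$ is simpler than that of $F_t(x;q^{1/2})$: for odd $t$ there is no $D^*$-style splitting at all, and for even $t$ only the $\pm$-split from the self-conjugate factor $\frac12((-\sqrt{1-x}q^{t/2};-q^{t/2})_\infty + (\sqrt{1-x}q^{t/2};-q^{t/2})_\infty)$ survives. So I would first derive analogues of Lemmas~\ref{lem:asymp_gen_odd} and~\ref{lem:asymp_gen_even} by applying Lemma~\ref{lem:asym} to each $q$-Pochhammer factor. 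The factor $(\pm\sqrt{1-x}q^{t/2};-q^{t/2})_\infty$ does not directly match the templates of Lemma~\ref{lem:asym}, so I would split it by parity as
\[
	(\pm\sqrt{1-x}q^{t/2};-q^{t/2})_\infty = (\pm\sqrt{1-x}q^{t/2};q^t)_\infty \, (\mp\sqrt{1-x}q^t;q^t)_\infty,
\]
and then apply Lemma~\ref{lem:asym} (with the role of $t$ taken by $t/2$ and $t$ respectively) to each factor. Writing $\widehat{F}_t(x;q^{1/2})=\tfrac12(\widehat F_t^+ + \widehat F_t^-)$ for even $t$, each $\widehat F_t^\pm$ then admits an asymptotic expansion of the same qualitative shape as in Lemma~\ref{lem:asymp_gen_odd}.

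Next I would verify the dominant-term computation at $h=0$, $k=1$. The dilogarithm contributions combine, in both parities of $t$, to
\[
	-\frac{t-1}{2}\cdot\frac{\Li_2(1-x)}{2\pi tz} - \frac{\Li_2(1-x)}{4\pi t z} = -\frac{\Li_2(1-x)}{4\pi z}
\]
for odd $t$, and an analogous computation using the duplication formula \eqref{eq:Li2_dup} together with $\Li_2(\pm\sqrt{1-x})+\Li_2(\mp\sqrt{1-x})=\tfrac12\Li_2(1-x)$ yields the same total for even $t$. Adding $\Li_2(1)/(4\pi z)$ from $(-q;q)_\infty$ gives the exponential kernel $\exp(\widehat{c}(x)/(4\pi z))$. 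The boundary Euler--Maclaurin constants at $\ell=k=1$ give the prefactor $\tfrac{1}{\sqrt{2}}x^{-(t-1)/4}$ for odd $t$, and $\tfrac{1}{2\sqrt{2}}x^{-(t-2)/4}\bigl((1+\sqrt{1-x})^{-1/2}+(1-\sqrt{1-x})^{-1/2}\bigr)$ for even $t$. The identity $(1\pm\sqrt{1-x})(1\mp\sqrt{1-x})=x$ gives $(1\pm\sqrt{1-x})^{-1/2}=(1\mp\sqrt{1-x})^{1/2}/\sqrt{x}$, which converts the even-$t$ prefactor into $b(x)/(2\sqrt{2}\,x^{t/4})$, matching the claimed $\widehat a(x)$.

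For the estimates on the arcs with $k\geq 2$, I would mirror the argument in the proof of Theorem~\ref{thm:dd_asym}. For odd $k$, setting $\widehat{c}_k(x):=\Li_2(1)-d_k^2\Li_2((1-x)^{k/d_k})$, the same $S_1+S_2$ decomposition (with $1-x$ in place of $1-x^2$) yields a uniform lower bound $\widehat{c}_1(x)-\tfrac{1}{k^2}\widehat{c}_k(x)>\widehat\varepsilon_t>0$ for $k\geq 3$, using the hypothesis $\Li_2(1-x)<\pi^2/(12t^2)$; this forces those arcs to be exponentially smaller than the $k=1$ contribution. For even $k$, the factor $(-q;q)_\infty$ contributes the suppression $e^{-\pi/(12kz)}$, which together with \eqref{Fbound} and \eqref{zbound} gives an $O(1)$ bound for the corresponding sum $\Sigma_2$.

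Finally, at $h=0, k=1$, I would evaluate the major-arc integral via Lemma~\ref{lem:Bessel} with $s=0$, $A=2\pi n$, $B=\widehat{c}(x)/(4\pi)$, and use $I_{-1}(y)\sim e^y/\sqrt{2\pi y}$ as $y\to\infty$ to extract the claimed asymptotic $\widehat{a}(x)\widehat{c}(x)^{1/4}n^{-3/4}e^{\sqrt{2\widehat{c}(x)n}}(1+O(n^{-1/2}))$. The main technical obstacle will be the even-$t$ case of Step~1: decomposing the mixed-sign Pochhammer $(\pm\sqrt{1-x}q^{t/2};-q^{t/2})_\infty$ cleanly, tracking the $\pm$ signs through the boundary Euler--Maclaurin terms, and checking the algebraic simplification into $b(x)$. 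Everything else follows the blueprint of Theorem~\ref{thm:dd_asym} almost verbatim with $1-x$ in place of $1-x^2$.
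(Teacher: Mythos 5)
Your proposal is correct: the dominant-term bookkeeping (the dilogarithm contributions summing to $-\Li_2(1-x)/(4\pi z)$ in both parities, the vanishing boundary term from $((1-x)q^t;q^{2t})_\infty$, and the simplification via $(1+\sqrt{1-x})(1-\sqrt{1-x})=x$) all checks out against the stated $\widehat{a}(x)$, and the minor-arc estimates go through exactly as you describe. The genuine difference from the paper is the reduction step: you rederive analogues of Lemmas~\ref{lem:asymp_gen_odd} and~\ref{lem:asymp_gen_even} from scratch, including the parity splitting $(\pm\sqrt{1-x}\,q^{t/2};-q^{t/2})_\infty=(\pm\sqrt{1-x}\,q^{t/2};q^t)_\infty(\mp\sqrt{1-x}\,q^t;q^t)_\infty$, whereas the paper observes the exact identities $\widehat{F}_t(x;q)=(1+\sqrt x)A_1(\sqrt x;q^2)$ for odd $t$ and $\widehat{F}_t(x;q)=\sum_{\pm}\frac{\sqrt x(1+\sqrt x)}{\left(1\pm\sqrt{(1-\sqrt x)/(1+\sqrt x)}\right)\left(1\mp\sqrt{1-x}\right)}B_1^{\pm}(\sqrt x;q^2)$ for even $t$, and then reuses Lemmas~\ref{lem:asymp_gen_odd} and~\ref{lem:asymp_gen_even} verbatim with $x\mapsto\sqrt x$ (note $1-(\sqrt x)^2=1-x$, so $c(\sqrt x)=\widehat c(x)$ and $\alpha_1(\sqrt x,0,1)$, $\beta_1^{\pm}(\sqrt x,0,1)$ produce exactly $\widehat a(x)$). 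The paper's route eliminates the mixed-sign Pochhammer analysis you flag as the main technical obstacle and the second pass through the even-$k$, even-$k/d$ minor-arc sign checks; your route is more self-contained but duplicates the Euler--Maclaurin work. One small notational slip: the quantity you call $b(x)$ at the end of the prefactor computation is $(1+\sqrt{1-x})^{1/2}+(1-\sqrt{1-x})^{1/2}$, the numerator of $\widehat a(x)$, not the $b(x)$ of Theorem~\ref{thm:dd_asym}.
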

\begin{proof}
If $t$ is odd, by Theorem \ref{thm:shift_gen},
\begin{align*}
	\widehat{F}_t (x; q)= (1+\sqrt x)A_1(\sqrt x;q^2),
\end{align*}
where $A_1(x;q)$ is the function defined by \eqref{eqn:A1defn} in Section \ref{sec:F_t_odd}. If $t$ is even, by Theorem \ref{thm:shift_gen},
\begin{align*}
	\widehat{F}_t (x; q)= \sum_{\pm}\frac{\sqrt x(1+\sqrt x)}{\left( 1\pm \sqrt{\frac{1-\sqrt x}{1+\sqrt x}}\right) \left(1\mp \sqrt{1-x}\right)}  B_1^{\pm}(\sqrt x;q^2),
\end{align*}
where $B_1^{\pm}(x;q)$ is the function defined by \eqref{eqn:B1defn} in Section \ref{sec:F_t_even}. From Lemmas \ref{lem:asymp_gen_odd} and \ref{lem:asymp_gen_even}, we can obtain the desired asymptotic formula by employing the circle method as in Section \ref{sec:circle}.
\end{proof}

Now we turn to the proof of Theorem~\ref{thm:shift_normal}.

\begin{proof}[Proof of Theorem~\ref{thm:shift_normal}]
We use that the $k$-th moment generating function is $\left( x \dfrac{\partial}{\partial x} \right)^k \widehat{F}_t (x,q)$ at $x=1$. To take a derivative, we use
\[
	\frac{1}{2} \bigg( (\sqrt{1-x}q^t; -q^t)_{\infty} + (-\sqrt{1-x}q^t;-q^t)_{\infty} \bigg) = \sum_{n \geq 0} \frac{ (x-1)^{n} q^{tn(2n+1)}}{(-q^t;q^{2t})_n (q^{2t};q^{2t} )_n},
\] 
which can be obtained from the identity
\[
	(-aq;q)_{\infty} = \sum_{n \geq 0} \frac{a^n q^{n(n+1)/2}}{(q;q)_{n}}.
\]
By employing Wright's circle method, we can derive the asymptotic formula for $\widehat{\mu}_{t,2n}$ and $\widehat{\sigma}_{t, 2n}^{2}$ as in Theorem \ref{thm:total_asymp}.

Using Theorem \ref{thm:shift_asym}, one can show that the moment generating function with the described mean $\widehat{\mu}_{t,2n}$ and variance $\widehat{\sigma}_{t,2n}^{2}$ goes to $e^{r^2/2 + o_r (1)}$ as $n \to \infty$. The asymptotic normal distribution follows again from  Theorem \ref{thm:Curtiss}.
\end{proof}

\section*{Acknowledgments}
Hyunsoo Cho was supported by the Basic Research Program through the National Research Foundation of Korea (NRF) funded by the Ministry of Science and ICT (NRF--2021R1C1C2007589). Byungchan Kim was supported by the Basic Science Research Program through the National Research Foundation of Korea (NRF) funded by the Ministry of Science and ICT (NRF--2022R1F1A1063530). Eunmi Kim was supported by the Basic Science Research Program through the National Research Foundation of Korea (NRF) funded by the Ministry of Education (RS-2023-00244423). Ae Ja Yee was partially supported by a grant ($\#$633963) from the Simons Foundation.


\end{document}